\documentclass{article}
\usepackage{graphicx} % Required for inserting images
\usepackage{amsmath,amssymb,amsthm,amsfonts} % 数学公式、符号、定理环境
\usepackage{mathrsfs} % 手写体
\usepackage{xparse} % 提供高级命令定义功能
\usepackage{refcount} % 提供获取引用计数器值的功能
\usepackage{algorithm}% 算法环境
\usepackage{algorithmicx}% 算法步骤显示行号
\usepackage{algpseudocode}% 伪代码环境

\makeatletter% 环境定义代码，algorithm环境为浮动体，不能跨页分割，需要重新定义环境
\newenvironment{breakablealgorithm}
{% \begin{breakablealgorithm}
		\begin{center}
			\refstepcounter{algorithm}% New algorithm
			\hrule height.8pt depth0pt \kern2pt% \@fs@pre for \@fs@ruled
			\renewcommand{\caption}[2][\relax]{% Make a new \caption
				{\raggedright\textbf{\ALG@name~\thealgorithm} ##2\par}%
				\ifx\relax##1\relax % #1 is \relax
				\addcontentsline{loa}{algorithm}{\protect\numberline{\thealgorithm}##2}%
				\else % #1 is not \relax
				\addcontentsline{loa}{algorithm}{\protect\numberline{\thealgorithm}##1}%
				\fi
				\kern2pt\hrule\kern2pt
			}
		}{% \end{breakablealgorithm}
		\kern2pt\hrule\relax% \@fs@post for \@fs@ruled
	\end{center}
}
\makeatother
\usepackage[urlcolor=blue]{hyperref}% 用于实现交叉引用的超链接功能，可选，若不需要跳转功能可省略
\usepackage{threeparttable} % 表格注释
\usepackage{setspace} % 行距设置
\usepackage{titlesec} % 标题格式
\usepackage{float} % 浮动体控制
\usepackage{fancyhdr} % 页眉页脚
\usepackage{cite} % 引用格式
\usepackage{tikz} % 绘图
\usepackage[dvipsnames, svgnames]{xcolor}  % 导言区加载
\usetikzlibrary{arrows.meta}
\usetikzlibrary{positioning} % 加载tikz的positioning库，支持相对定位(如above=of ...)
\usetikzlibrary{decorations.pathmorphing} % 加载路径变形库（核心）
\usepackage{pgffor} % 循环功能
\usepackage{indentfirst} % 段落首行缩进
\usepackage{caption} % 自定义图表标题格式（如字体、编号等）

\newtheoremstyle{mytheoremstyle}% 样式名称
  {3pt}% 定理环境与上文的间距
  {3pt}% 定理环境与下文的间距
  {\normalfont}% 正文的字体样式（这里是正常字体）
  {0pt}% 首行缩进
  {\bfseries}% 定理标题的字体样式（这里是加粗）
  {}% 标题后的标点
  {.5em}% 标题与正文之间的间距
  {}% 定理标题的格式（这里为空）
  
\theoremstyle{mytheoremstyle}
\newtheorem{theorem}{Theorem}%设置按章节编号
\newtheorem{lemma}{Lemma}
\newtheorem{proposition}{Proposition}
\newtheorem{example}{Example}

\tikzset{
    base arc/.style = {
        draw = black,            % 对应 arrow1 的 draw=black
        thick,                   % 对应 arrow1 的 thick
        -{Latex[length = 1.5mm, width = 1.5mm]}, % 你之前 arrow1 的 Latex 箭头
        line width=0.8pt        % 原始线宽（可保留）
    },
    bend arc/.style = {
        base arc,               % 继承包含你箭头的基础样式
        bend right=30,           % 原始弯曲角度
        looseness=1.2            % 弯曲弧度松紧度
    },
    vertex/.style = {
        circle,draw,fill=white,inner sep=1.5pt,line width=0.8pt % 黑色边的空心小圆点
    },
    solidvertex/.style = {
        circle,fill = black,minimum size = 5pt,inner sep = 0pt
    },
    zigzag/.style={
        draw = black,            % 和base arc一致：线条颜色
        thick,                   % 和base arc一致：基础线粗
        -{Latex[length = 2.2mm, width = 2.2mm]}, % Latex箭头
        line width=0.8pt,        % 和base arc一致：精准线宽
        decorate,               % 启用装饰
        decoration={
            zigzag,             % 锯齿装饰类型
            segment length=0.1cm, % 你设定的锯齿长度
            amplitude=0.03cm,   % 你设定的锯齿高度
            pre length=0pt,     % 起始无多余长度
            post length=0pt     % 结束无多余长度
        }
    }
}

\newcommand{\labelNode}[3]{
    \node[#1] at (#2) {$#3$};
}

\NewDocumentCommand{\specialproof}{m}{%
  \begingroup
  \def\currentlabel{#1}% 存储当前定理标签
  \renewcommand{\proofname}{Proof of Theorem \ref*{#1}}% 设置证明标题
  \begin{proof}% 开始原始证明环境
}

\NewDocumentCommand{\endspecialproof}{}{%
  \end{proof}% 结束原始证明环境
  \endgroup
}

\NewDocumentCommand{\specialproofpro}{m}{%
  \begingroup
  \def\currentlabel{#1}% 存储当前定理标签
  \renewcommand{\proofname}{Proof of Proposition \ref*{#1}}% 设置证明标题
  \begin{proof}% 开始原始证明环境
}

\NewDocumentCommand{\endspecialproofpro}{}{%
  \end{proof}% 结束原始证明环境
  \endgroup
}

\newcounter{proofcase}
\newcounter{subproofcase}[proofcase]

\newenvironment{ProofCase}[1][]{%
  \refstepcounter{proofcase}%
  \par\smallskip\noindent\textbf{Case \theproofcase. #1}\hspace{0.5em}%
  \ignorespaces
}{%
  \par\smallskip%
}

\newenvironment{SubProofCase}[1][]{%
  \refstepcounter{subproofcase}%
  \par\smallskip\noindent\textbf{Subcase \theproofcase.\arabic{subproofcase}. #1}\hspace{0.5em}%
  \ignorespaces
}{%
  \par\smallskip%
}

\AtBeginEnvironment{proof}{%
  \setcounter{proofcase}{0}%
}

\title{Internally-disjoint directed pendant Steiner trees with three terminal vertices in Cartesian product digraphs}
\author{Shanshan Yu$^{1}$,  Yuefang Sun$^{2}$%\footnote{Corresponding author.}
\\
$^{1}$ School of Mathematics and Statistics,
Ningbo University,\\
Ningbo 315211,  China, yushanshan33hh@163.com\\
$^{2}$ Corresponding author. School of Mathematics and Statistics,\\
Ningbo University,
Ningbo 315211, China, sunyuefang@nbu.edu.cn}
\date{}

\begin{document}

\maketitle

\begin{abstract}
    Let $D=(V(D),A(D))$ be a digraph with a terminal vertex subset $S\subseteq V(D)$ such that $|S|=k\geq 2$. An out-tree $T$ of $D$ rooted at $r$ is called a directed pendant $(S,r)$-Steiner tree (or, pendant $(S,r)$-tree for short) if $r\in S\subseteq V(T)$ and $d_{T}^{+}(r)=d_{T}^{-}(u)=1$ for each $u\in S\backslash \{r\}$. Two pendant $(S,r)$-trees $T_{1}$ and $T_{2}$ are internally-disjoint if $A(T_{1})\cap A(T_{2})=\varnothing$ and $V(T_{1})\cap V(T_{2})=S$. The pendant-tree $k$-connectivity $\tau_{k}(D)$ of $D$ is defined as $$\tau_{k}(D)=\min\{\tau_{S,r}(D)\mid S\subseteq V(D),|S|=k,r\in S\},$$ where $\tau_{S,r}(D)$ denotes the maximum number of pairwise internally-disjoint pendant $(S,r)$-trees in $D$. 
    
    In this paper, we derive a sharp lower bound for the pendant-tree 3-connectivity of the Cartesian product digraph $D\square H$, where $D$ and $H$ are both strong digraphs. Specifically, we prove the lower bound $\tau_{3}(D\square H)\geq \tau_{3}(D)+\tau_{3}(H)$. Moreover, we propose a polynomial-time algorithm for finding internally-disjoint pendant $(S,r)$-trees which attain this lower bound.
    \vspace{0.3cm}\\
{\bf Keywords:} Cartesian product, Steiner tree, packing, connectivity
\vspace{0.3cm}\\
{\bf AMS subject classification (2020)}: 05C05, 05C20, 05C40, 05C70, 05C76, 05C85, 68Q25.
\end{abstract}

\section{Introduction}
Throughout this paper, all digraphs are assumed to have no parallel arcs or loops. We write $[n]$ for the collection of all natural numbers ranging from $1$ to $n$, and let $|S|$ stand for the cardinality of a set $S$. %A directed path containing $n$ vertices is denoted as $\overrightarrow{P_{n}}$. 
A directed path $\overrightarrow{P}$ is referred to as a directed $u-v$ path, denoted by $\overrightarrow{P_{uv}}$, if $u$ is the initial vertex and $v$ is the terminal vertex of $\overrightarrow{P}$. For any $u',v'\in V(\overrightarrow{P_{uv}})$ such that $u'$ precedes $v'$ on $\overrightarrow{P_{uv}}$, we denote by $\overrightarrow{P_{uv}[u',v']}$ the unique directed subpath from $u'$ to $v'$ in $\overrightarrow{P_{uv}}$. 
A digraph $D'$ is called a {\em subdigraph} of $D$ when $V(D')\subseteq V(D)$, $A(D')\subseteq A(D)$, and every arc in $A(D')$ has both end vertices contained in $V(D')$. For any set $X\subseteq V(D)$, the digraph $D-X$ is the subdigraph induced by $V(D)-X$.
A digraph $D=(V(D),A(D))$ is said to be {\em strongly connected} (or {\em strong} for short) if for any pair of distinct vertices $u,v\in V(D)$, there exist both a directed $u-v$ path $\overrightarrow{P_{uv}}$ and a directed $v-u$ path $\overrightarrow{P_{vu}}$. For a strong digraph $D=(V,A)$, a set $S\subset V$ is a {\em separator} if $D-S$ is not strong. A digraph $D$ is {\em $\ell$-strongly connected} (or {\em $\ell$-strong}) if $|V|\geq \ell+1$ and $D$ has no separator with fewer than $\ell$ vertices.
A digraph $D$ is {\em symmetric} if the presence of an arc $uv \in A(D)$ implies that $vu \in A(D)$ as well. Equivalently, a symmetric digraph $D$ can be constructed from its {\em underlying undirected graph} $G$ by replacing each edge of $G$ with a pair of opposite arcs, and we denote this by $D=\overleftrightarrow{G}$.

%The {\em connectivity} of an undirected graph $G$, written $\kappa(G)$, is the minimum order of a vertex set $S\subseteq V(G)$ such that $G\backslash S$ is disconnected or has only one vertex. The {\em generalized connectivity} (or $k$-tree connectivity) of a graph $G$, introduced by Hager~\cite{Hager}, is a natural generalization of classical connectivity. Another tree-connectivity parameter called pendant tree connectivity was also proposed in his work \cite{Hager}. 
For an undirected graph $G=(V(G),E(G))$ and a terminal vertex subset $S\subseteq V(G)$ satisfying $|S|=k\geq 2$, an {\em $S$-Steiner tree} (or {\em $S$-tree} for short) $T$ refers to a tree in $G$, which includes all vertices of $S$. Hager~\cite{Hager} first introduced the {\em pendant $S$-Steiner tree} (or {\em pendant $S$-tree} for short) in 1985. This is a special $S$-tree where every vertex in $S$ is a leaf (i.e., has degree one) in $T$. Two pendant $S$-trees are said to be {\em edge-disjoint} if they share no common edges. Additionally, if the only vertices shared by two edge-disjoint pendant $S$-trees are exactly those in $S$, then these trees are called {\em internally-disjoint}. The {\em local pendant-tree $k$-connectivity of $S$}, denoted by $\tau_{S}(G)$, is the maximum number of pairwise internally-disjoint pendant $S$-trees in $G$. We further define the {\em pendant-tree $k$-connectivity} of $G$ as
\begin{align*}
    \tau_{k}(G)=\min\{\tau_{S}(G)\mid S\subseteq V(G),|S|=k\}.
\end{align*}
From the definition above, it is straightforward to see that $ \tau_{2}(G)=\kappa(G)$, where $\kappa(G)$ is the classical vertex connectivity of $G$. For more information on pendant tree connectivity and related topics, the readers can see~\cite{Li-Mao} for a monograph.

An {\em out-tree} is an oriented tree where all vertices except one, called the {\em root}, have in-degree one. For a digraph $D=(V(D),A(D))$, and a terminal vertex subset $S\subseteq V(D)$ with $r\in S$ and $|S|=k\geq 2$, a {\em directed $(S,r)$-Steiner tree} (or {\em $(S,r)$-tree} for short), is an out-tree $T$ rooted at $r$ and contains all vertices of $S$. If every vertex in $S$ has degree one in an $(S,r)$-tree $T$, then $T$ is called a {\em pendant $(S,r)$-tree}. Specifically, in each pendant $(S,r)$-tree, the in-degree of $r$ and the out-degree of each vertex $v\in S\setminus \{r\}$ equal zero, and the out-degree of $r$ and the in-degree of each vertex $v\in S\setminus \{r\}$ is one~\cite{Yu-Sun-2}. Two pendant $(S,r)$-trees $T_{1}$ and $T_{2}$ are said to be {\em internally-disjoint} if they share no arcs and their only common vertices are those in $S$. Let $\tau_{S,r}(D)$ be the maximum number of pairwise internally-disjoint pendant $(S,r)$-trees in $D$, the {\em directed pendant-tree $k$-connectivity} of a digraph $D$ is defined as
\begin{align*}
    \tau_{k}(D)=\min\{\tau_{S,r}(D)\mid S\subseteq V(D),|S|=k,r\in S\}.
\end{align*}
By definition, it is straightforward to see that $\tau_{2}(D)=\kappa(D)$, where $\kappa(D)$ is the classical {\em vertex connectivity} of a digraph $D$. This equality confirms that the directed pendant-tree $k$-connectivity is a natural extension of classical vertex connectivity of digraphs. Moreover, this parameter is closely linked to the other parameters of digraphs, including directed tree connectivity~\cite{Sun-Yeo}, strong subgraph connectivity~\cite{Sun-Gutin, Sun-GutinJOIN, Sun-Gutin-Yeo-Zhang}, and directed cycle connectivity~\cite{Wang-Sun}. For more information on these topics, the readers can see~\cite{Sun-book} for a new monograph.

The pendant-tree $k$-connectivity is not only a natural combinatorial measure, but also motivated by its meaningful practical interpretations. For example, let $D$ represent a network. If one aims to connect two vertices in $D$, a path suffices for this connection. However, if one intends to connect a vertex set $S\subseteq V(D)$ with $|S|\geq 3$, a tree is required to link all vertices in $S$, and such a tree is commonly referred to as a Steiner tree. A pendant Steiner tree (an important variant of the Steiner tree) further restricts terminal vertices to be leaves of the tree, and this structure is widely adopted in the physical design of VLSI circuits \cite{Gr¨otschel-Martin-Weismantel, Sherwani} and the fault-tolerant routing of multi-terminal communication networks \cite{Chen-Hsieh}.
%Such a tree connecting a vertex set is generally called a (pendant) Steiner tree, which is widely applied in the physical design of VLSI circuits.

%Two directed $uv$ paths $\overrightarrow{P_{1}}$ and $\overrightarrow{P_{2}}$ are called {\em internally-disjoint} if their common vertex set is exactly $\{u,v\}$. Let $B\subseteq V(D)$ and $A\subset V(D)\backslash B$. The set of directed $(A,B)$ paths is a family of internally-disjoint directed paths whose starting at a vertex $u\in A$, ending at a vertex $v\in B$ and whose internal vertices belong neither to $A$ nor to $B$. 

Product digraphs, which construct new composite digraphs by combining two or more base digraphs via specific rules, are a powerful tool for designing and analyzing large-scale interconnection networks. Among the various types of product digraphs, the {\em Cartesian product} digraph forms a fundamental class, widely adopted for their construction owing to its excellent scalability and symmetry. In recent years, this class of digraphs has attracted growing research interest, as seen in \cite{Bang-Jensen-Gutin-1, Bao-Igarashi-Öhring}.

The Cartesian product of two digraphs $D$ and $H$, denoted by $D\square H$, is a digraph with vertex set
\begin{align*}
    V(D\square H)=V(D)\times V(H)=\{(u,v)\mid u\in V(D), v\in V(H)\},
\end{align*}
and arc set
\begin{align*}
    A(D\square H)=\{(u,v)(u',v')\mid uu'\in A(D), v=v'~\text{or}~u=u',vv'\in A(H)\}.
\end{align*}
Clearly, this product is commutative, that is $D\square H\cong H\square D$. Figure \ref{fig1} illustrates this construction with the example $\overrightarrow{P_{4}}\square \overrightarrow{C_{3}}$.
\begin{figure}[H] 
    \centering
    \begin{tikzpicture}
        \begin{scope}[xshift = 0cm]
            \node[vertex] (u1) at (0,0) {};
            \node[vertex] (u2) at (0,0.8) {};
            \node[vertex] (u3) at (0,1.6) {};
            \node[vertex] (u4) at (0,2.4) {};
            \node[vertex] (v1) at (1,0) {};
            \node[vertex] (v2) at (1.8,0) {};
            \node[vertex] (v3) at (2.6,0) {};
            \draw [base arc] (u1) -- (u2);
            \draw [base arc] (u2) -- (u3);
            \draw [base arc] (u3) -- (u4);
            \draw [base arc] (v1) -- (v2);
            \draw [base arc] (v2) -- (v3);
            \draw [bend arc] (v3) to (v1);
            \node at (1.2,-0.5) {(a) $\overrightarrow{P_{4}}$ and $\overrightarrow{C_{3}}$};
        \end{scope}
        \begin{scope}[xshift = 5cm]
            \node[vertex] (x11) at (0,0) {};
            \node[vertex] (x12) at (0.8,0) {};
            \node[vertex] (x13) at (1.6,0) {};
            \node[vertex] (x21) at (0,0.8) {};
            \node[vertex] (x22) at (0.8,0.8) {};
            \node[vertex] (x23) at (1.6,0.8) {};
            \node[vertex] (x31) at (0,1.6) {};
            \node[vertex] (x32) at (0.8,1.6) {};
            \node[vertex] (x33) at (1.6,1.6) {};
            \node[vertex] (x41) at (0,2.4) {};
            \node[vertex] (x42) at (0.8,2.4) {};
            \node[vertex] (x43) at (1.6,2.4) {};
            \draw [base arc] (x11) -- (x12);
            \draw [base arc] (x12) -- (x13);
            \draw [bend arc] (x13) to [bend right=30] (x11);
            \draw [base arc] (x21) -- (x22);
            \draw [base arc] (x22) -- (x23);
            \draw [bend arc] (x23) to [bend right=30] (x21);
            \draw [base arc] (x31) -- (x32);
            \draw [base arc] (x32) -- (x33);
            \draw [bend arc] (x33) to [bend right=30] (x31);
            \draw [base arc] (x41) -- (x42);
            \draw [base arc] (x42) -- (x43);
            \draw [bend arc] (x43) to [bend right=30] (x41);
            \draw [base arc] (x11) -- (x21);
            \draw [base arc] (x21) -- (x31);
            \draw [base arc] (x31) -- (x41);
            \draw [base arc] (x12) -- (x22);
            \draw [base arc] (x22) -- (x32);
            \draw [base arc] (x32) -- (x42);
            \draw [base arc] (x13) -- (x23);
            \draw [base arc] (x23) -- (x33);
            \draw [base arc] (x33) -- (x43);
            \node at (0.8,-0.5) {(b) $\overrightarrow{P_{4}}\square \overrightarrow{C_{3}}$};
        \end{scope}
    \end{tikzpicture}
    \caption{The Cartesian product $\overrightarrow{P}_{4}\square \overrightarrow{C}_{3}$}
    \label{fig1}
\end{figure}

In this paper, we study the directed pendant-tree
$k$-connectivity of Cartesian product digraphs, and give a sharp lower bound for $\tau_{3}(D\square H)$ in terms of $\tau_{3}(D)$ and $\tau_{3}(H)$ as follows. 
\begin{theorem}\label{thm1.1} % 还缺紧的例子
    Let $D$ and $H$ be two strong digraphs, we have
    \begin{align*}
        \tau_{3}(D\square H)\geq \tau_{3}(D)+\tau_{3}(H).
        %\tau_{3}(D\square H)\geq \min \{3\lfloor\frac{\tau_{3}(D)}{2}\rfloor,3\lfloor\frac{\tau_{3}(H)}{2}\rfloor\}.
    \end{align*}
    Moreover, this bound is sharp.
\end{theorem}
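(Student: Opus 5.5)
We set $\ell_1=\tau_3(D)$ and $\ell_2=\tau_3(H)$. Fix an arbitrary terminal set $S=\{x,y,z\}\subseteq V(D\square H)$ with root $x$. Write $x=(u_1,v_1)$, $y=(u_2,v_2)$ and $z=(u_3,v_3)$. The goal is to build $\ell_1+\ell_2$ pairwise internally-disjoint pendant $(S,x)$-trees in $D\square H$.

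We use the standard layer notation. $D(v)$ denotes the copy $\{(u,v):u\in V(D)\}$ of $D$, and $H(u)$ denotes the copy $\{(u,v):v\in V(H)\}$ of $H$. Every tree or path of $D$ (resp.\ $H$) lifts isomorphically into each $D(v)$ (resp.\ $H(u)$). We also use two facts from the definition. First, $\tau_2=\kappa$, and $\tau_3(D)\le\kappa(D)$, since a pendant tree contains a path from the root to each leaf through private internal vertices. Second, $\delta^+,\delta^-\ge\tau_3$. Hence $D$ and $H$ are $\ell_1$- and $\ell_2$-strong, and Menger-type fans are available in each factor.

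The proof is a case analysis on the projections of the three terminals.

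\textbf{Case 1: all three terminals lie in one layer}, say $v_1=v_2=v_3=v$. Then $\{u_1,u_2,u_3\}$ are distinct in $D$. We take $\ell_1$ internally-disjoint pendant trees of $D$ and lift them into $D(v)$. For each out-neighbour $w$ of $v$ in $H$, we build one further tree. We use $\ell_2$ such neighbours $w_1,\dots,w_{\ell_2}$ together with $\ell_2$ internally-disjoint routes back (obtained from $\kappa(H)\ge\ell_2$). The $j$-th tree goes $x\to(u_1,w_j)$, then runs inside the layer $D(w_j)$ to $(u_2,w_j)$ and $(u_3,w_j)$ through a fixed out-tree of $D$. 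From there it returns to $y$ and $z$ along lifted copies of an $H$-path from $w_j$ back to $v$, inside $H(u_2)$ and $H(u_3)$.

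Disjointness across different $j$ comes from the disjointness of the layers $D(w_j)$ and of the fan paths in $H$. Disjointness from the first $\ell_1$ trees holds because those live entirely in $D(v)$. The return paths may need to be chosen as an in-fan to $v$ in $H$, which we do using $\kappa(H)\ge\ell_2$. The case of one common $D$-layer is symmetric.

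\textbf{Case 2: the terminals are in general position}, meaning the $u_i$ are not all equal and the $v_i$ are not all equal, possibly with two coinciding. We mix the two constructions. We take the $\ell_1$ pendant trees of $D$ for the set $\{u_1,u_2,u_3\}$ when these are distinct. When they are not distinct, we instead take $\ell_1$ internally-disjoint paths or fans. We lift the pieces to layers $D(v)$ indexed by a suitable fan of $H$-vertices, and then connect into $y$ and $z$ via $H$-layers. Symmetrically we do the same with the $\ell_2$ structures from $H$ lifted into $H(u)$-layers. This is the approach used for the undirected generalized connectivity of Cartesian products (Li et al.), adapted to out-trees.

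The subcases are distinguished by which coordinates coincide: e.g.\ $u_1=u_2\ne u_3$ with all $v_i$ distinct, and so on. In each subcase we specify the trees explicitly. We then check the two requirements: internal vertices are private to each tree, and every terminal has in-degree one, except the root, which has out-degree one.

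\textbf{Main obstacle.} The hardest step is Case 2 when coordinates partially coincide, e.g.\ $u_1=u_2$ but $v_1\ne v_2$. There, lifted trees from different families would compete for the vertices $(u_1,\cdot)$ and the arcs near terminals. My first attempt is to route each tree's "transfer" segments through distinct layers indexed by out-neighbours of the terminal coordinates, chosen via Menger fans in the factor. Conflicts are then resolved by a swapping argument on the at most two shared layers. Should this fail, I would weaken to an inductive choice with one extra spare neighbour, using $\delta^\pm\ge\tau_3$.

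Finally, I would establish sharpness by an example. Take $D=\overleftrightarrow{K_n}$ and $H=\overleftrightarrow{K_m}$ with suitable small parameters, and pick terminals so that $x$ has out-degree exactly $\tau_3(D)+\tau_3(H)$. For instance, with $D$ and $H$ directed cycles, the out-degree of $x$ in $D\square H$ is $2=1+1$. The out-degree of the root is an obvious upper bound on $\tau_{S,x}$, since the root has out-degree one in each pendant tree.
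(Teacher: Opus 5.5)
Your Case~1 is sound. It is the paper's Case~3 of Lemma~\ref{lem3.3} (and, symmetrically, Lemma~\ref{lem3.1}), and there $\delta^+(H)\ge\ell_2$ and $\kappa(H)\ge\ell_2$ do suffice.

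\textbf{The gap is Case~2.} It contains essentially the whole theorem: every configuration with $|S_D|,|S_H|\ge 2$, including those where the other terminals are out-neighbours of the root. You never specify a single tree there, and the ``swapping argument'' and ``spare neighbour'' fallback are not arguments.

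What is missing is a way for the two families to share connectors without colliding. Let $\widetilde T_1,\dots,\widetilde T_{\ell_1}$ and $\hat T_1,\dots,\hat T_{\ell_2}$ be the pendant trees of the projections, with branch vertices $c_t$ and $s_j$. These are pairwise distinct non-terminals because the trees are internally disjoint. When the $u_i$ are distinct and the $v_i$ are distinct, the paper builds three kinds of trees:
\begin{itemize}
\item the $j$-th $H$-type tree from $\hat T_j^{(u_1)}[v_1,s_j]$, the \emph{single} tree $\widetilde T_1$ lifted into $D(s_j)$, and in-fan paths in $H(u_2)$ and $H(u_3)$ back to $y$ and $z$;
\item the $D$-type trees for $t\ge 2$ from $\widetilde T_t^{(v_1)}[u_1,c_t]$, the single tree $\hat T_1$ lifted into $H(c_t)$, and $\widetilde T_t^{(v_2)}[c_t,u_2]$, $\widetilde T_t^{(v_3)}[c_t,u_3]$;
\item the last tree from $\widetilde T_1^{(v_1)}$ together with one \emph{extra} path in each in-fan.
\end{itemize}
When two $D$-coordinates coincide, the pendant trees of $D$ are replaced by $\ell_1+1$ internally disjoint paths, at most one of which is a single arc. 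That one becomes the spine $\overrightarrow{P}$ of $\overrightarrow{P}\square H$, which carries $\ell_2$ trees by Proposition~\ref{pro3.1}. The other $\ell_1$ paths have interior predecessors through which the remaining trees branch. The case where two $H$-coordinates coincide uses $\ell_2+1$ paths of $H$ analogously.

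All of this needs $\kappa\ge\tau_3+1$ (Lemma~\ref{lem2.1}), which your preliminaries do not provide. With only $\kappa(H)\ge\ell_2$ there is no extra fan path for the last tree. With only $\ell_1$ paths, one of which must serve as the spine, the count stops at $\ell_1+\ell_2-1$.

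\textbf{Your sharpness argument is also wrong.} A directed cycle has $\tau_3=0$, not $1$: the only out-tree from the root reaching both other terminals is a path, which makes one terminal internal (consistent with Lemma~\ref{lem2.2}). So for $\overrightarrow{C_n}\square\overrightarrow{C_m}$ the bound to match is $0$, and $d^+(x)=2$ proves nothing. The complete-digraph suggestion fails too: $\tau_3(\overleftrightarrow{K_n})=n-3$, while every vertex of $\overleftrightarrow{K_n}\square\overleftrightarrow{K_m}$ has out-degree $n+m-2$.

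What does work is the observation behind the paper's example: an arc from the root to another terminal can never lie in a pendant $(S,r)$-tree. So take $S$ to be a root together with its only two out-neighbours, e.g.\ in $\overrightarrow{C_n}\square\overrightarrow{C_m}$, or in $\overleftrightarrow{P_n}\square\overrightarrow{C_m}$ with the root above an end of the path. This gives $\tau_{S,r}=0=\tau_3(D)+\tau_3(H)$.
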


Furthermore, we propose a polynomial-time algorithm for finding internally-disjoint pendant $(S,r)$-trees which attain this lower bound.
    
\begin{theorem}\label{thm1.2}
    Let $D\square H$ be a Cartesian product digraph, and let $S=\{r,x,y\}\subseteq V(D\square H)$ be a terminal set with $r$ as the root. Let $S_{D}=\{u_{p},u_{q},u_{w}\}$
    and $u_{p}\in S_{D}$ is the corresponding vertex of $r$ in $D$. Similarly, let $S_{H}=\{v_{a},v_{b},v_{c}\}$
    and $v_{a}\in S_{H}$ is the corresponding vertex of $r$ in $H$. Suppose that $\widetilde{T}_{1},\widetilde{T}_{2},\cdots,\widetilde{T}_{\ell}$ are $\ell$ internally-disjoint pendant $(S_{D},u_{p})$-trees in $D$, and $\hat{T}_{1},\hat{T}_{2},\cdots,\hat{T}_{h}$ are $h$ internally-disjoint pendant $(S_{H},v_{a})$-trees in $H$. Algorithm \ref{Alg-1} constructs a family of $\ell+h$ pairwise internally-disjoint pendant $(S,r)$-trees in $D\square H$ in time $O(|V(D)|^{2}|A(D)|+|V(H)|^{2}|A(H)|)$.
\end{theorem}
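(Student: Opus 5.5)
Throughout, write $r=(u_p,v_a)$, $x=(u_q,v_b)$ and $y=(u_w,v_c)$. For $v\in V(H)$ let $D^{v}$ denote the copy of $D$ induced by $V(D)\times\{v\}$, and for $u\in V(D)$ let $H^{u}$ denote the copy of $H$ induced by $\{u\}\times V(H)$. The plan is to give Algorithm \ref{Alg-1} explicitly as a lifting procedure and then verify its output. Each $D$-tree $\widetilde{T}_i$ will be lifted to a pendant $(S,r)$-tree $T_i$ that lives mainly in the layer $D^{v_a}$ together with a few $H$-columns. Symmetrically, each $H$-tree $\hat{T}_j$ will be lifted to a tree $T'_j$ that lives mainly in the column $H^{u_p}$ together with a few $D$-rows. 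Correctness then reduces to three checks: each lifted object is a pendant $(S,r)$-tree, the lifts are pairwise internally-disjoint, and the running time is as claimed.

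\textbf{Generic case.} Suppose $S_D$ and $S_H$ each consist of three distinct vertices. Let $z_i^q$ and $z_i^w$ be the in-neighbours of $u_q$ and $u_w$ in $\widetilde{T}_i$. The tree $T_i$ is built in three parts:
\begin{itemize}
\item[(i)] the copy of $\widetilde{T}_i-\{u_q,u_w\}$ inside $D^{v_a}$;
\item[(ii)] a directed $v_a$--$v_b$ path in $H^{z_i^q}$ followed by the arc $(z_i^q,v_b)(u_q,v_b)$;
\item[(iii)] a directed $v_a$--$v_c$ path in $H^{z_i^w}$ followed by the arc $(z_i^w,v_c)(u_w,v_c)$.
\end{itemize}
The trees $T'_j$ are built dually: take the copy of $\hat{T}_j-\{v_b,v_c\}$ in $H^{u_p}$, then $D$-rows at the in-neighbours $t_j^b,t_j^c$ of $v_b,v_c$ in $\hat{T}_j$, then the final $H$-arcs into $x$ and $y$. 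In each lift the out-degree of $r$ and the in-degrees of $x$ and $y$ equal $1$, because they are inherited from $\widetilde{T}_i$ or $\hat{T}_j$. Acyclicity and the out-tree property hold once the attached paths are vertex-disjoint from the rest of the tree except at their starting vertex, and this must be enforced when choosing the paths. Disjointness among the $T_i$ follows because their internal vertices in $D^{v_a}$ and their columns $z_i^{\ast}$ are distinct, since the $\widetilde{T}_i$ are internally-disjoint. The same argument gives disjointness among the $T'_j$.

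\textbf{Main obstacle.} The difficulty is the interaction between a column $H^{z}$ used by some $T_i$ and a row $D^{t}$ used by some $T'_j$. These meet in the single vertex $(z,t)$, and there are further conflicts near the terminals. My first attempt is to route the column paths by pieces of the trees $\hat{T}$ rather than by arbitrary $H$-paths, and the row paths by pieces of the trees $\widetilde{T}$, so that crossings are pre-assigned. Concretely, I would pair the $i$-th $D$-lift with the $i$-th $H$-lift and let the column path of $T_i$ run through $\hat{T}_i$-vertices. Any remaining crossing $(z,t)$ would then be resolved by a local rerouting argument, splicing at the first common vertex. If this fails I would fall back on an explicit argument that uses strong connectivity to detour. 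I expect this to be where most of the work lies.

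\textbf{Degenerate cases and running time.} The degenerate cases are those where $S_D$ or $S_H$ has repeated vertices, for instance $u_q=u_p$, $u_q=u_w$, or $v_a=v_b$. They must be split off: $\tau_3$ is defined only for genuine $3$-sets, so the hypothesis there is read as trees on the distinct vertices, or as paths via $\tau_2=\kappa$. In these cases the construction is adapted by routing the corresponding terminal entirely inside one layer or one column. I would enumerate the cases by the number of distinct coordinates and give the modified lift for each. For the time bound, each lift requires $O(1)$ path computations, each done by BFS in a copy of $D$ or $H$ (cost $O(|A(D)|)$ or $O(|A(H)|)$), plus the disjointness bookkeeping. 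Since $\ell<|V(D)|$ and $h<|V(H)|$, and each rerouting step costs at most one further $O(|V|\cdot|A|)$ search, the total is $O(|V(D)|^{2}|A(D)|+|V(H)|^{2}|A(H)|)$.
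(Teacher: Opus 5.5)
There is a genuine gap, and it is the step you defer. You never show that a column $H^{z}$ carrying part of some $T_i$ and a row $D^{t}$ carrying part of some $T'_j$ can be kept disjoint. Your concrete fix actually guarantees a collision. If $T_i$'s column path is a copy of $\hat T_i[v_a,v_b]$ in $H^{z_i^q}$ and $T'_i$'s row path is a copy of $\widetilde T_i[u_p,u_q]$ in $D^{t_i^b}$, then the first passes through $t_i^b$ and the second through $z_i^q$. So both trees contain $(z_i^q,t_i^b)$ for every $i\le\min\{\ell,h\}$.

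Internal-disjointness of the given trees points to the opposite choice. Route the columns of \emph{all} $D$-lifts through one fixed $\hat T_1$, and the rows of \emph{all} $H$-lifts through one fixed $\widetilde T_1$. Since $z_i^q,z_i^w$ are internal to $\widetilde T_i$ and $t_j^b,t_j^c$ are internal to $\hat T_j$, a crossing then forces $i=j=1$. One colliding pair still remains, and repairing it is the real content of the proof. It needs a path beyond what the $\ell+h$ given trees supply, and ``local rerouting'' or plain strong connectivity does not provide it.

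The paper obtains that extra path from Lemma~\ref{lem2.1} ($\kappa(H)\ge h+1$) together with the Fan Lemma. In Case 1 of Lemma~\ref{lem3.3}, which is what Algorithm~\ref{Alg-1} implements, the trees are built as follows.
\begin{itemize}
\item $T'_s$ runs along $\hat T_s^{(u_p)}$ to the branch vertex $(u_p,v_{j_s})$ and then uses the whole copy $\widetilde T_1^{(v_{j_s})}$. It finishes with the $s$-th path of an $(h+1)$-fan into $x$ in $H(u_q)$ from $(u_q,v_{j_1}),\ldots,(u_q,v_{j_h}),(u_q,v_a)$, and likewise into $y$ in $H(u_w)$.
\item The lifts of $\widetilde T_2,\ldots,\widetilde T_\ell$ use the whole copy $\hat T_1^{(u_{i_t})}$ at their branch vertex $u_{i_t}$.
\item $\widetilde T_1$ is recovered as $\widetilde T_1^{(v_a)}$ plus the $(h+1)$-st fan paths from $(u_q,v_a)$ and $(u_w,v_a)$.
\end{itemize}
Without that $(h+1)$-st path you only reach $\ell+h-1$ trees.

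The degenerate cases also need quantitative input missing from your sketch. When $|S_D|=1$ the paper uses $\delta^0\ge\ell+2$ from Lemma~\ref{lem2.2}. When $|S_D|=2$ it uses $\ell+1$ internally-disjoint $u_p$--$u_w$ paths and Proposition~\ref{pro3.1}.

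Your time bound is not justified either. BFS produces single paths, not the internally-disjoint families and fans the construction needs. The number of reroutings is also unbounded (a priori up to $\ell h$ crossings), so one extra $O(|V|\cdot|A|)$ search per reroute does not stay within $O(|V(D)|^{2}|A(D)|+|V(H)|^{2}|A(H)|)$. The paper's bound comes from making only a constant number of max-flow calls (Algorithms~\ref{Alg-2} and~\ref{Alg-3}, via Dinic), each on $D$ or on $H$ separately.
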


%In particular, if a strong digraph is a symmetric digraph, then we have the following result.
%\begin{theorem}\label{thm1.2} %还没有很确定
    %Let $\overleftrightarrow{G}$ be a onnected symmetric digraph and $H$ be a strong digraph, we have
    %\begin{align*}
        %\tau_{3}(\overleftrightarrow{G}\square H)\geq \tau_{3}(\overleftrightarrow{G})+\tau_{3}(H).
    %\end{align*}
    %Moreover, the bound is sharp.
%\end{theorem}
The rest of this paper is organized as follows. Some necessary definitions and notations are given in Section 2. The proof of Theorem~\ref{thm1.1} and an example illustrating the tightness of the bound are shown in Section 3. In section 4, we propose a polynomial-time algorithm to find a family of internally-disjoint pendant $(S,r)$-trees which attain the lower bound described in Theorem~\ref{thm1.1}. As shown in Theorem~\ref{thm1.2}, the time complexity of this algorithm is $O((|V(D)|^{2}|A(D)|+|V(H)|^{2}|A(H)|)$ which is clearly polynomial in $|V(D)|$ and $|V(H)|$.

\section{Preliminaries}
In this section, some basic definitions and results that will be used in this paper are introduced.

Let $D=(V(D),A(D))$ be a digraph. For any vertex $v\in V(D)$, we use $N_{D}^{+}(v)$ (resp. $N_{D}^{-}(v)$) to represent the {\em out-neighbours} (resp. {\em in-neighbours}) of $v$ in $D$. More precisely, 
\begin{align*}
    N_{D}^{+}(v)=\{u\in V(D)\backslash \{v\}\mid vu\in A(D)\}, N_{D}^{-}(v)=\{w\in V(D)\backslash \{v\}\mid wv\in A(D)\}.
\end{align*}
The {\em out-degree} (resp. {\em in-degree}) of $v$ in $D$ is defined as $d_{D}^{+}(v)=|N_{D}^{+}(v)|$ (resp. $d_{D}^{-}(v)=|N_{D}^{-}(v)|$). We refer the out-degree and in-degree of a vertex as its {\em semi-degrees}. The {\em degree} of a vertex $v$ in $D$ is the sum of its semi-degrees, that is, $d_{D}(v)=d_{D}^{+}(v)+d_{D}^{-}(v)$. If the context is clear, we always omit $D$ in the above notation. The {\em minimum out-degree} (resp. {\em minimum in-degree}) of $D$ is $\delta^{+}(D)=\min\{d^{+}(v)\mid v\in V(D)\}$ (resp. $\delta^{-}(D)=\min\{d^{-}(v)\mid v\in V(D)\}$). The {\em minimum semi-degree} of $D$ is $\delta^{0}(D)=\min\{{\delta}^{+}(D),{\delta}^{-}(D)\}$.

Let $D$ and $H$ be two strong digraphs with vertex sets $V(D)=\{u_{i}\mid i\in[n]\}$ and $V(H)=\{v_{j}\mid j\in [m]\}$, respectively. Unless specified otherwise, we use this labeling for vertices in $D$ and $H$ throughout the paper. %Then $V(D\square H)=\{(u_{i},v_{j})\mid i\in [n], j\in[m])\}$. A $D$-layer $D(v)=\langle\{(u_{i},v)\mid i\in [n]\}\rangle$ is the induced subgraph of $D\square H$ related to a fixed vertex $v$ of $H$, which is isomorphic to $D$. Analogously, an $H$-layer $H(u)=\langle\{(u,v_{j})\mid j\in [m]\}\rangle$ is the induced subgraph of $D\square H$ related to a fixed vertex $u$ of $D$, which is isomorphic to $H$.
We first define two types of induced subdigraphs of $D\square H$.
\begin{itemize}
    \item For each $j\in [m]$, let $D(v_{j})$ be the subdigraph induced by the vertex set $\{(u_{i},v_{j})\mid i\in [n]\}$;
    \item For each $i\in [n]$, let $H(u_{i})$ be the subdigraph induced by the vertex set $\{(u_{i},v_{j})\mid j\in [m]\}$.
\end{itemize}
Clearly, for each $j\in [m]$, $D(v_{j})\cong D$; for each $i\in [n]$, $H(u_{i})\cong H$.
%Note that $D(u_{i},v_{j_{1}})\cong D(u_{i},v_{j_{2}})$ for any distinct $v_{j_{1}},v_{j_{2}}\in V(H)$. For simplicity, we abbreviate $D(u_{i},v_{j})$ to $D(v_{j})$. Similarly, $H(u_{i},v_{j})$ can be abbreviated to $H(u_{i})$.

For any $u_{i_{1}},u_{i_{2}}\in V(D)$ and $v_{j_{1}},v_{j_{2}}\in V(H)$, the following vertex inclusions hold
\begin{itemize}
    \item $(u_{i_{1}},v_{j_{1}}),(u_{i_{1}},v_{j_{2}})\in V(H(u_{i_{1}}))$; $(u_{i_{2}},v_{j_{1}}),(u_{i_{2}},v_{j_{2}})\in V(H(u_{i_{2}}))$;
    \item $(u_{i_{1}},v_{j_{1}}),(u_{i_{2}},v_{j_{1}})\in V(D(v_{j_{1}}))$; $(u_{i_{1}},v_{j_{2}}),(u_{i_{2}},v_{j_{2}})\in V(D(v_{j_{2}}))$.
\end{itemize}
We refer to $(u_{i_{1}},v_{j_{2}})$ as {\em the corresponding vertex} of $(u_{i_{1}},v_{j_{1}})$ in $H(u_{i_{1}})$, and to $(u_{i_{2}},v_{j_{1}})$ as the corresponding vertex of $(u_{i_{1}},v_{j_{1}})$ in $D(v_{j_{1}})$.
%We use $D(u_{i},v_{j})$ to denote the subdigraph of $D\square H$ induced by the vertex set $\{(u_{i},v_{j})\mid i\in [n]\}$. Similarly, we use $H(u_{i},v_{j})$ to denote the subdigraph of $D\square H$ induced by the vertex set $\{(u_{i},v_{j})\mid j\in [m]\}$. It is easy to see $D(u_{i},v_{j_{1}})=D(u_{i},v_{j_{2}})$ for different $v_{j_{1}}$ and $v_{j_{2}}$ of $H$. Thus, we can replace $D(u_{i},v_{j})$ by $D(v_{j})$ for simplicity. Similarly, we can replace $H(u_{i},v_{j})$ by $H(u_{j})$. 

For any directed path $\overrightarrow{P}\subseteq D$ and directed tree $\widetilde{T}\subseteq D$, their counterparts in $D(v_{j})$ are denoted by $\overrightarrow{P^{(v_{j})}}$ and $\widetilde{T}^{(v_{j})}$, respectively. For any directed path $\overrightarrow{Q}\subseteq H$ and directed tree $\hat{T}\subseteq H$, their counterparts in $H(u_{i})$ are denoted by $\overrightarrow{Q^{(u_{i})}}$ and $\hat{T}^{(u_{i})}$. Furthermore, for distinct vertices $(u_{i},v_{j_{1}}),(u_{i},v_{j_{2}})\in H(u_{i})$, the directed path $\overrightarrow{Q_{(u_{i},v_{j_{1}})(u_{i},v_{j_{2}})}}$ (contained in $H(u_{i})$) can be simplified to $\overrightarrow{Q_{v_{j_{1}}v_{j_{2}}}^{(u_{i})}}$. Analogously, we can replace $\overrightarrow{P_{(u_{i_{1}},v_{j})(u_{i_{2}},v_{j})}}$ (contained in $D(v_{j})$) by $\overrightarrow{P_{u_{i_{1}}u_{i_{2}}}^{(v_{j})}}$. For short, the directed subpath $\overrightarrow{P_{(u_{i_{1}},v_{j})(u_{i_{2}},v_{j})}[(u',v_{j}),(u'',v_{j})]}$ (resp. $\overrightarrow{Q_{(u_{i},v_{j_{1}})(u_{i},v_{j_{2}})}[(u_{i},v'),(u_{i},v'')]}$) is denoted by $\overrightarrow{P_{u_{i_{1}}u_{i_{2}}}^{(v_{j})}[u',u'']}$ (resp. $\overrightarrow{Q_{v_{j_{1}}v_{j_{2}}}^{(u_{i})}[v',v'']}$). %where $u',u''\in V(\overrightarrow{P_{u_{i_{1}}u_{i_{2}}}^{(v_{j})}})$ (resp. $v',v''\in V(\overrightarrow{Q_{v_{j_{1}}v_{j_{2}}}^{(u_{i})}})$) and $u'$
Let $S=\{x_{1},x_{2},x_{3}\}\subseteq V(D)$ and $T$ be a pendant $(S,x_{1})$-tree in $D$. If $\overrightarrow{P_{uv}}\subseteq T$, where $\overrightarrow{P_{uv}}$ is the unique directed path from $u$ to $v$ in $T$, then we use $T[u,v]$ to denote $\overrightarrow{P_{uv}}$. Furthermore, we denote by $\alpha$ the {\em branch vertex} of $T$, i.e. the unique vertex satisfying $V(T[\alpha,x_{2}])\cap V(T[\alpha,x_{3}])=\{\alpha\}$.

Finally, for a vertex set $S=\{(u_{i_{1}},v_{j_{1}}),(u_{i_{2}},v_{j_{2}}),\cdots,(u_{i_{k}},v_{j_{k}})\}\subseteq V(D\square H)$, we define
\begin{itemize}
    \item $S_{D}=\{u_{i_{1}},u_{i_{2}},\cdots,u_{i_{k}}\}$ as the {\em projection} of $S$ onto $D$;
    \item $S_{H}=\{v_{j_{1}},v_{j_{2}},\cdots,v_{j_{k}}\}$ as the {\em projection} of $S$ onto $H$.
\end{itemize}
%we can define the directed path and the directed tree corresponding to

We next state two lemmas relating $\tau_{k}(D)$ to the vertex connectivity $\kappa(D)$ and {\em minimum semi-degree} $\delta^{0}(D)$ of a strong digraph $D$.
\begin{lemma}\cite{Yu-Sun-1}\label{lem2.1}
    Let $k\geq 2$ and $\ell\geq 1$ be two integers. If $D$ is a strong digraph with $\tau_{k}(D)\geq \ell$, then $\kappa(D)\geq k+\ell-2$.
\end{lemma}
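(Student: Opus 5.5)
The plan is to argue by contraposition: assume $\kappa(D)\leq k+\ell-3$ and exhibit a terminal set $S$ with $|S|=k$ and a root $r\in S$ such that $\tau_{S,r}(D)\leq \ell-1$. The one structural fact everything rests on is this. In a pendant $(S,r)$-tree $T$, every vertex of $S\setminus\{r\}$ is a leaf (out-degree $0$) and $r$ has in-degree $0$. Hence for any $v\in S\setminus\{r\}$, the path $T[r,v]$ has all its internal vertices outside $S$. Internally-disjoint trees share no vertex outside $S$, so these paths, one per tree, are pairwise internally disjoint and use no terminals internally.

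\textbf{Case 1: $D$ has a separator $X$ with $|X|=\kappa(D)$.} Since $D-X$ is not strong, there are vertices $u,v\in V(D)\setminus X$ with no directed $u-v$ path in $D-X$; in particular $uv\notin A(D)$. I will take $r=u$ and put $u,v\in S$. The remaining $k-2$ terminals are chosen inside $X$ as far as possible: if $|X|\geq k-2$, pick them all in $X$; otherwise take all of $X$ and fill up with arbitrary other vertices, which is possible since $|V(D)|\geq k$ is implicit in $\tau_k(D)$ being defined.

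For each pendant $(S,u)$-tree $T$, the path $T[u,v]$ must meet $X$, because $D-X$ contains no $u-v$ path. By the fact above, it meets $X$ in an internal vertex lying outside $S$, that is, in a vertex of $X\setminus S$. Internal disjointness forces these vertices to be distinct for distinct trees. This gives $\tau_{S,u}(D)\leq |X\setminus S|$. If $|X|\geq k-2$, then $|X\setminus S|=|X|-(k-2)\leq \ell-1$. If $|X|<k-2$, then $X\setminus S=\varnothing$, so there are no trees at all, which contradicts $\ell\geq 1$.

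\textbf{Case 2: $D$ has no separator, i.e. $D$ is complete and $\kappa(D)=n-1$ with $n=|V(D)|$.} Take any $S$ with $|S|=k$ and any root $r\in S$.
\begin{itemize}
\item If $k\geq 3$, the out-neighbour of $r$ in a pendant tree cannot be a terminal. If it were, $r$ would have out-degree one into a leaf, and the tree could not reach the third terminal. So each tree uses at least one non-terminal vertex, and these vertices are distinct across trees. Hence $\ell\leq n-k$, i.e. $\kappa(D)=n-1\geq k+\ell-1$.
\item If $k=2$, at most one tree (the single arc) avoids non-terminals, so $\ell\leq n-1=k+\ell'-\dots$; concretely $\ell\leq n-k+1$, giving $n-1\geq k+\ell-2$.
\end{itemize}

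The only delicate points are the choice of terminals inside the separator, which is what makes the count $|X|-(k-2)$ appear, and the degenerate case of complete digraphs, where no separator exists. Both are handled above, so I expect no serious obstacle.
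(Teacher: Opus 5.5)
Your proof is correct. The paper does not prove Lemma~\ref{lem2.1} itself; it quotes it from the cited work of Yu and Sun, so there is no in-paper argument to compare against. Your approach is the natural one:
\begin{itemize}
\item take a minimum separator $X$;
\item put $k-2$ terminals inside $X$ and take the root $u$ and a terminal $v$ that $X$ separates;
\item note that each tree's $u$--$v$ path needs its own vertex of $X\setminus S$, because terminals other than the root are leaves and the root has in-degree $0$.
\end{itemize}
This gives exactly $\tau_{S,u}(D)\le \kappa(D)-(k-2)$, and your separate treatment of complete digraphs, which have no separator, closes the remaining case.

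There are two cosmetic points. First, the expression ``$\ell\le n-1=k+\ell'-\dots$'' in your $k=2$ bullet is garbled and should be deleted. The bound $\ell\le n-k+1$ that follows it is enough; alternatively, for $k=2$ you can invoke $\tau_2(D)=\kappa(D)$ directly. Second, in the complete case with $k\ge 3$ you actually obtain the stronger bound $\kappa(D)\ge k+\ell-1$, which is consistent with Lemma~\ref{lem2.2}.
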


\begin{lemma}\cite{Yu-Sun-1}\label{lem2.2}
    Let $k\geq 3$ and $\ell\geq 1$ be two integers. If $D$ is a strong digraph with $\tau_{k}(D)\geq \ell$, then $\delta^{0}(D)\geq k+\ell-1$.
\end{lemma}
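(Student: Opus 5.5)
The plan is to argue directly from the definition of a pendant $(S,r)$-tree, using only the fact that $k\geq 3$. The key observation is that in a pendant $(S,r)$-tree with $|S|\geq 3$, the unique out-neighbour of the root $r$ and the unique in-neighbour of every terminal $u\in S\setminus\{r\}$ must be non-terminal vertices. I would prove this first.

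\emph{Out-neighbour of $r$.} Suppose $r$'s out-neighbour in $T$ were a terminal $u\in S\setminus\{r\}$. Then $d^{+}_{T}(u)=0$ and $d^{+}_{T}(r)=1$, so $T$ is just the arc $ru$. This contradicts $|S|\geq 3$.

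\emph{In-neighbour of $u\in S\setminus\{r\}$.} This in-neighbour cannot be another non-root terminal, since those have out-degree $0$ in $T$. It cannot be $r$ either, by the argument just given. Hence it lies outside $S$.

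Since internally-disjoint trees share only vertices of $S$, the following holds for any family of $\ell$ pairwise internally-disjoint pendant $(S,r)$-trees:
\begin{itemize}
\item the out-neighbours of $r$ used by the $\ell$ trees are $\ell$ distinct vertices of $N^{+}(r)\setminus S$;
\item for each $u\in S\setminus\{r\}$, the in-neighbours of $u$ used by the $\ell$ trees are $\ell$ distinct vertices of $N^{-}(u)\setminus S$.
\end{itemize}

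Next I bound the minimum out-degree. Let $v$ attain $\delta^{+}(D)$; note that $|V(D)|\geq k$, as $\tau_k(D)$ is defined. I split into two cases.

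\emph{Case $d^{+}(v)\geq k-1$.} Take $S$ consisting of $v$ together with $k-1$ out-neighbours of $v$, with root $r=v$. By the observation, $\tau_{S,v}(D)\geq \ell$ forces $|N^{+}(v)\setminus S|\geq \ell$. Hence $d^{+}(v)\geq (k-1)+\ell$.

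\emph{Case $d^{+}(v)<k-1$.} Take $S\ni v$ containing all of $N^{+}(v)$, padded with further vertices to size $k$, and root it at $v$. Then $N^{+}(v)\setminus S=\varnothing$, so no pendant $(S,v)$-tree exists. This contradicts $\ell\geq 1$, so this case cannot occur.

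The minimum in-degree is handled symmetrically. Let $v$ attain $\delta^{-}(D)$, and take $S$ to be $v$ together with $k-1$ in-neighbours of $v$ (or all of them, padded, if there are fewer than $k-1$). Root $S$ at any vertex other than $v$, so that $v$ is a non-root terminal. By the observation, $v$ needs $\ell$ distinct in-neighbours outside $S$. This gives $d^{-}(v)\geq k+\ell-1$, and the deficient case again yields a contradiction.

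Combining the two bounds gives $\delta^{0}(D)\geq k+\ell-1$. The only delicate step is the initial observation: it rules out the root's out-neighbour or a terminal's in-neighbour being a terminal, and this is exactly where $k\geq 3$ is used. The rest is counting.
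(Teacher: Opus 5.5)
Your proof is correct and complete. The key observation is that for $|S|\ge 3$, the root's out-neighbour and each non-root terminal's in-neighbour in a pendant $(S,r)$-tree lie outside $S$, so they are pairwise distinct across internally-disjoint trees. Combining this with a choice of $S$ containing $k-1$ out-neighbours (resp.\ in-neighbours) of a vertex of minimum out-degree (resp.\ in-degree) gives exactly $d^{+}(v),d^{-}(v)\ge k+\ell-1$. The deficient cases, where all neighbours fit inside $S$, correctly yield a contradiction with $\ell\ge 1$.

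The paper does not prove this lemma; it only imports it from \cite{Yu-Sun-1}, so there is no in-paper argument to compare against. Your counting argument is self-contained and never uses that $D$ is strong. It also correctly locates $k\ge 3$ as the only place where the hypothesis on $k$ matters. For $k=2$ the single arc $ru$ is a pendant tree, and the analogous bound fails: $\overrightarrow{C_m}$ has $\tau_2=1$ but $\delta^{0}=1$.
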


In order to show our main result, we first recall Menger's Theorem and the Fan Lemma. These are well-known results about the properties on $\ell$-strong digraphs, and they will be used repeatedly in subsequent arguments.
\begin{theorem}\label{thm2.1}
    (\textbf{Menger's Theorem} \cite{Bang-Gutin}) Let $D$ be an $\ell$-strong digraph, and let $u,v$ be a pair of distinct vertices in $D$. Consequently, there exist $\ell$ pairwise internally-disjoint directed $u-v$ paths $\overrightarrow{P_{uv,1}},\overrightarrow{P_{uv,2}},\cdots,\overrightarrow{P_{uv,\ell}}$ in $D$, and $\ell$ pairwise internally-disjoint directed $v-u$ paths $\overrightarrow{P_{vu,1}'},\overrightarrow{P_{vu,2}'},\cdots,\overrightarrow{P_{vu,\ell}'}$ in $D$.
\end{theorem}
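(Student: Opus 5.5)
The statement is the classical Menger theorem, and the paper only cites it from \cite{Bang-Gutin}. If I were to prove it, I would reduce it to the integral max-flow min-cut theorem by splitting vertices. By symmetry (apply the argument to the reverse digraph, which is also $\ell$-strong), it suffices to produce $\ell$ internally-disjoint directed $u-v$ paths.

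\textbf{Case $uv\notin A(D)$.} I would build an auxiliary network $D'$ as follows. Replace each vertex $w\notin\{u,v\}$ by two vertices $w^{-},w^{+}$ joined by an arc $w^{-}w^{+}$ of capacity $1$. Each original arc $xy$ becomes $x^{+}y^{-}$, with the conventions $u^{+}=u$ and $v^{-}=v$, and has capacity $\infty$ (or a large integer). Integral flows of value $t$ from $u$ to $v$ in $D'$ decompose into $t$ directed paths that use distinct splitting arcs. These correspond exactly to $t$ internally-disjoint directed $u-v$ paths in $D$. By the max-flow min-cut theorem with integrality (Ford--Fulkerson), it therefore suffices to show that every $u$-$v$ cut in $D'$ has capacity at least $\ell$. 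A finite cut consists only of splitting arcs $w^{-}w^{+}$, and the corresponding set $X$ of vertices $w$ meets every $u-v$ path of $D$. Since $uv\notin A(D)$, the set $X$ is a genuine separator: $D-X$ contains no $u-v$ path, so $D-X$ is not strong. The $\ell$-strong hypothesis then gives $|X|\ge \ell$, as required.

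\textbf{Case $uv\in A(D)$.} Let $D^{*}=D-uv$. I claim every set $X\subseteq V(D)\setminus\{u,v\}$ meeting all $u-v$ paths of $D^{*}$ has $|X|\ge \ell-1$. Suppose instead $|X|\le \ell-2$. Since $|V(D)|\ge \ell+1$, there is a vertex $w\notin X\cup\{u,v\}$. In $D^{*}-X$, either $w$ is not reachable from $u$ or $v$ is not reachable from $w$.
\begin{itemize}
\item In the first case, $X\cup\{v\}$ separates $u$ from $w$ in $D$. Indeed, any $u-w$ path avoiding $X\cup\{v\}$ does not use the arc $uv$, so it would lie in $D^{*}-X$.
\item In the second case, $X\cup\{u\}$ separates $w$ from $v$ in $D$, by the symmetric argument.
\end{itemize}
Either way we obtain a separator of $D$ with at most $\ell-1$ vertices, contradicting that $D$ is $\ell$-strong. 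Applying the flow argument of the first case to $D^{*}$ gives $\ell-1$ internally-disjoint $u-v$ paths avoiding the arc $uv$. Adding the one-arc path $uv$ gives $\ell$ in total.

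The main obstacle is this second case, and more generally making sure that the cut-to-separator translation respects the definition of $\ell$-strong used in the paper (via non-strong $D-S$ and $|V|\ge \ell+1$) rather than the local $u$-$v$ separation version. The argument above is designed to bridge exactly that gap.
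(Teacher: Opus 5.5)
Your proof is correct. The paper gives no proof of this statement; it quotes Menger's theorem from Bang-Jensen and Gutin and uses it as a black box. So you are not competing with an argument in the text but supplying the standard flow-based proof in self-contained form.

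\textbf{Comparison with the usual textbook route.} The common presentation proceeds in two steps:
\begin{itemize}
\item First, state the vertex version locally: for nonadjacent $u,v$, the maximum number of internally-disjoint $u-v$ paths equals the minimum size of a $u-v$ separator. This comes from max-flow min-cut by vertex splitting.
\item Second, derive the global statement for $\ell$-strong digraphs. An adjacent pair is typically handled by the lemma that deleting an arc from an $\ell$-strong digraph leaves an $(\ell-1)$-strong digraph.
\end{itemize}

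\textbf{What your version buys.} You follow the same skeleton but prove only what is needed, and you prove it in the form the paper's definition requires.
\begin{itemize}
\item In the nonadjacent case, you convert a finite minimum cut directly into a separator in the paper's sense. This works because $u$ and $v$ survive in $D-X$ and $v$ is unreachable from $u$ there, so $D-X$ is not strong.
\item In the adjacent case, you prove only the local bound $|X|\ge \ell-1$ for sets $X$ meeting all $u-v$ paths of $D-uv$. The auxiliary vertex $w$ (available because $|V(D)|\ge \ell+1$) produces the separators $X\cup\{v\}$ or $X\cup\{u\}$. This is a localized form of the arc-deletion lemma.
\item This is exactly the step where the paper's global definition of $\ell$-strong (non-strong $D-S$, plus $|V(D)|\ge \ell+1$) must be matched to the local cut condition. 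The paper's citation leaves that step implicit; you make it explicit.
\end{itemize}

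\textbf{Two cosmetic points.}
\begin{itemize}
\item In the split network, delete the arcs entering $u$ and the arcs leaving $v$. Your conventions $u^{+}=u$ and $v^{-}=v$ leave the images of such arcs unspecified. Deleting them does not change the $u-v$ flows.
\item The passage to $v-u$ paths needs no reversed digraph. The hypotheses are symmetric in $u$ and $v$, so your argument applies verbatim to the ordered pair $(v,u)$.
\end{itemize}
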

Note that the directed $u-v$ paths and $v-u$ paths above are not required to be internally-disjoint from each other.

\begin{lemma}\label{lem2.3}
     (\textbf{Fan Lemma} \cite{Bang-Jensen-Gutin-1}, pp. 98) Let %$\ell$ be an integer. If 
     $D=(V(D),A(D))$ be an $\ell$-strong digraph with $u\in V(D)$ and let $Z=\{z_{1},z_{2},\cdots,z_{\ell}\}\subseteq V(D)\backslash \{u\}$ be a vertex set of cardinality $\ell$, then there exists an $\ell$-fan, denote by $F_{Z,u}^{-}=\{\overrightarrow{R}_{i}\mid z_{i}-u\}_{i=1}^{\ell}$, from $Z$ to $u$, i.e., there exists a family of $\ell$ pairwise internally-disjoint paths $\overrightarrow{R_{1}},\overrightarrow{R_{2}},\cdots,\overrightarrow{R_{\ell}}$ such that $\overrightarrow{R_{i}}$ is a directed $z_{i}-u$ path.
\end{lemma}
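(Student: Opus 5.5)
The plan is to reduce the Fan Lemma to the local form of Menger's Theorem. I would add an auxiliary source vertex joined to all of $Z$, and then strip it off the resulting paths.

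Let $D'$ be obtained from $D$ by adding a new vertex $s\notin V(D)$ together with the $\ell$ arcs $sz_{1},sz_{2},\ldots,sz_{\ell}$. Since $u\notin Z$, the arc $su$ is not in $D'$. Menger's Theorem~\ref{thm2.1} is stated only for $\ell$-strong digraphs, and $D'$ is not strong ($s$ has in-degree $0$). So I will invoke the local version of Menger's Theorem \cite{Bang-Gutin} instead. It says that for non-adjacent $s,u$ (with $su\notin A(D')$), the maximum number of internally-disjoint directed $s-u$ paths equals the minimum size of a set $X\subseteq V(D')\setminus\{s,u\}$ meeting every directed $s-u$ path.

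The key step is to show that every such separating set $X$ has $|X|\geq \ell$. Suppose $|X|\leq \ell-1$; note $X\subseteq V(D)\setminus\{u\}$.
\begin{itemize}
\item Since $|Z|=\ell>|X|$, some $z_{i}\in Z\setminus X$.
\item Since $D$ is $\ell$-strong, $|V(D)|\geq \ell+1$ and $D-X$ is strong. Both $z_{i}$ and $u$ lie in $D-X$ and are distinct, so $D-X$ contains a directed $z_{i}-u$ path $\overrightarrow{P}$.
\item Then $s z_{i}$ followed by $\overrightarrow{P}$ is a directed $s-u$ path in $D'-X$, contradicting the choice of $X$.
\end{itemize}
Hence the local Menger theorem yields $\ell$ pairwise internally-disjoint directed $s-u$ paths $\overrightarrow{W_{1}},\ldots,\overrightarrow{W_{\ell}}$ in $D'$.

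Finally, extract the fan from these paths.
\begin{itemize}
\item The out-neighbours of $s$ in $D'$ are exactly $z_{1},\ldots,z_{\ell}$, so each $\overrightarrow{W_{j}}$ begins with an arc $sz_{\sigma(j)}$.
\item These second vertices are internal vertices of the paths and the paths are internally-disjoint, so $\sigma$ is injective, hence a bijection onto $[\ell]$.
\item Moreover, $z_{\sigma(j)}$ cannot occur on any other path $\overrightarrow{W_{j'}}$, again by internal-disjointness.
\item Deleting $s$ from $\overrightarrow{W_{j}}$ gives a directed $z_{\sigma(j)}-u$ path $\overrightarrow{R_{\sigma(j)}}$ in $D$.
\end{itemize}
The resulting paths pairwise share only $u$, which is the required $\ell$-fan $F_{Z,u}^{-}$.

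The only delicate point is the mismatch between the global Menger statement recorded as Theorem~\ref{thm2.1} and the local vertex-separator form actually needed here. If one insists on using only Theorem~\ref{thm2.1}, an alternative is to also add arcs from every vertex of $D$ into $s$ so that $D'$ becomes $\ell$-strong. In that case one must check that no new separators of size less than $\ell$ appear (any separator avoiding $s$ leaves $D-X$ strong, and $s$ remains reachable from and into it), and note that the added arcs into $s$ are never used by $s-u$ paths.
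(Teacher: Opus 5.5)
Your proof is correct. You add a source $s$ with arcs to $Z$, show that every set separating $s$ from $u$ has size at least $\ell$, apply the local vertex form of Menger's theorem, and delete $s$; this yields the required fan, and each $z_i$ lies only on its own path. The paper gives no proof of its own and simply cites the Fan Lemma from Bang-Jensen and Gutin. Your reduction to Menger via an auxiliary vertex is the standard derivation of that cited result.
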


\section{Proof of Theorem \ref{thm1.1}}
In this section, we first establish a proposition, which will be used in the proof of Theorem \ref{thm1.1}. %For the sake of exposition, we denote pairwise internally-disjoint directed paths as PIDDPs, pairwise internally-disjoint directed paths from $u$ to $v$ as $(u,v)$-PIDDPs, and pairwise internally-disjoint pendant $(S,r)$-trees as $(S,r)$-PIDPTs.
Let $D$ and $H$ be two strong digraphs, and let $S=\{r,x,y\}\subseteq V(D\square H)$. We consider the case where the projection of $S$ onto $D$ satisfies $S_{D}=\{u_{p},u_{w}\}\subseteq V(D)$ with $p\neq w$, and the projection of $r$ onto $D$ is exactly $u_{p}$. For any directed path $\overrightarrow{P_{u_{p}u_{w}}}$ in $D$, we show that $\overrightarrow{P_{u_{p}u_{w}}}\square H$ always contains $\tau_{3}(H)$ pairwise internally-disjoint pendant $(S,r)$-trees.
%derive a result concerning the lower bound of $\tau_{3}(\overrightarrow{P_{u_{p}u_{w}}}\square H)$, 
This result is formalized in the following proposition.
\begin{proposition}\label{pro3.1}
    Let $D$ and $H$ be two strong digraphs. For any terminal vertex set $S=\{r,x,y\}\subseteq V(D\square H)$, if $S_{D}=\{u_{p},u_{w}\}$ and the projection of $r$ onto $D$ is exactly $u_{p}$, then there exist $\tau_{3}(H)$ pairwise internally-disjoint pendant $(S,r)$-trees in $\overrightarrow{P_{u_{p}u_{w}}}\square H$.
    %and let $\overrightarrow{P_{u_{p}u_{w}}}$ be a directed $u_{p}-u_{w}$ path in $D$. For the terminal set $S=\{r,x,y\}\subseteq V(\overrightarrow{P_{u_{p}u_{w}}}\square H)$, where the projection of $r$ onto $D$ is exactly $u_{p}$. We have
    %\begin{align*}
        %\tau_{3}(\overrightarrow{P_{u_{p}u_{w}}}\square H)\geq \tau_{3}(H).
    %\end{align*}
    %Moreover, the bound is sharp.
\end{proposition}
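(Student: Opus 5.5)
Throughout, write $t=\tau_{3}(H)$, $r=(u_{p},v_{a})$, and let $\overrightarrow{P_{u_{p}u_{w}}}$ have vertex sequence $u_{p}=w_{0},w_{1},\dots,w_{s}=u_{w}$. The plan is to build every tree from three kinds of pieces: a copy of a pendant tree (or a path) of $H$ inside the layer $H(u_{p})$ or $H(u_{w})$, a \emph{horizontal} copy $\overrightarrow{P^{(z)}_{u_{p}u_{w}}}$ in some layer $D(z)$, and single arcs. Distinct trees will use horizontal copies in distinct layers $D(z)$. Since the intermediate layers $H(w_{1}),\dots,H(w_{s-1})$ contain no terminals, internal disjointness then reduces to disjointness inside $H(u_{p})$ and $H(u_{w})$. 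By Lemma~\ref{lem2.1} (with $k=3$) $H$ is $(t+1)$-strong, and by Lemma~\ref{lem2.2} $\delta^{0}(H)\ge t+2$. These are the surplus resources for degenerate configurations. Because $S_{D}=\{u_{p},u_{w}\}$, there are two configurations: (A) both $x,y$ lie in $H(u_{w})$; (B) one of them, say $x=(u_{p},v_{b})$, lies in $H(u_{p})$ and $y=(u_{w},v_{c})$ lies in $H(u_{w})$.

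\textbf{Configuration (A), $x=(u_{w},v_{b})$, $y=(u_{w},v_{c})$.} First suppose $v_{a},v_{b},v_{c}$ are distinct. Take $t$ internally-disjoint pendant $(\{v_{a},v_{b},v_{c}\},v_{a})$-trees $\hat T_{1},\dots,\hat T_{t}$ in $H$, and let $z_{i}$ be the unique out-neighbour of $v_{a}$ in $\hat T_{i}$; the $z_{i}$ are distinct non-terminals. Tree $i$ is formed by the arc $r(u_{p},z_{i})$, then $\overrightarrow{P^{(z_{i})}_{u_{p}u_{w}}}$, then $\hat T_{i}^{(u_{w})}-(u_{w},v_{a})$. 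It is a pendant $(S,r)$-tree, and two such trees meet only in $x,y$: the pieces $\hat T_{i}^{(u_{w})}-(u_{w},v_{a})$ are disjoint outside $\{x,y\}$, and the horizontal paths lie in distinct layers. If $v_{a}\in\{v_{b},v_{c}\}$, or $v_{b}=v_{c}$ is impossible since $x\neq y$, I instead work directly in $H(u_{w})$. The case $v_{a}=v_{b}$, say, means $x$ is the corresponding vertex of $r$. Then I use $t+2$ out-neighbours $z_{1},\dots$ of $v_{a}$ (from $\delta^{0}$) and move across to $(u_{w},z_{i})$. In $H(u_{w})$, I apply the Fan Lemma (Lemma~\ref{lem2.3}) in $H-\{v_{a}\}$ or in $H$, which is $(t+1)$-strong, to route from the $(u_{w},z_{i})$ to $x$ and to $y$. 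A branch vertex is obtained by letting one fan end at $y$ with its last arc, and another fan end at $x$. Here I would pick the $z_{i}$ as the successors of $v_{a}$ on $t$ internally-disjoint $v_{a}$–$v_{c}$ paths, which exist by Menger's Theorem~\ref{thm2.1}, and use the arc $(u_{w},z_{i})\to$ in-neighbour of $x$ choices coming from $\delta^{-}(v_{a})\ge t+2$.

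\textbf{Configuration (B), $x=(u_{p},v_{b})$, $y=(u_{w},v_{c})$.} In the generic case where $v_{a},v_{b},v_{c}$ are distinct, I take the same trees $\hat T_{i}$. Let $c_{i}$ be the in-neighbour of $v_{c}$ in $\hat T_{i}$; these are distinct. Tree $i$ is $\hat T_{i}^{(u_{p})}$ with the arc into $(u_{p},v_{c})$ deleted, together with $\overrightarrow{P^{(c_{i})}_{u_{p}u_{w}}}$ and the arc $(u_{w},c_{i})(u_{w},v_{c})$. Here $c_{i}\neq v_{a}$ because $d^{+}(v_{a})=1$ in a pendant tree (if $c_{i}$ were the branch vertex this is fine). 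Distinct $c_{i}$ give distinct layers, and $H(u_{w})$ is touched only at $(u_{w},c_{i})$ and $y$. The cases $v_{c}=v_{a}$ or $v_{c}=v_{b}$ (with $v_{a}\ne v_{b}$ forced) are handled as in (A). I use $t$ internally-disjoint $v_{a}$–$v_{b}$ paths in $H(u_{p})$ (Menger), and for each pick a vertex on it to branch off horizontally. Choosing the second-to-last vertex, which is distinct across paths, and then reaching $y$ by one arc in $H(u_{w})$ requires that vertex to be an in-neighbour of $v_{c}$. When $v_{c}=v_{b}$ this is automatic, and when $v_{c}=v_{a}$ I instead branch at the first internal vertex $z_{i}$ and use the arc $(u_{w},z_{i})(u_{w},v_{a})$, choosing the paths so that first vertices are in-neighbours of $v_{a}$.

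The main obstacle is exactly these degenerate coincidences, in which $S_{H}$ has fewer than three vertices and the trees $\hat T_{i}$ are unavailable. The difficulty is to guarantee that the branching vertices selected on the Menger paths are simultaneously adjacent to the required endpoint in the other layer. I would first try to resolve it by applying Menger's Theorem~\ref{thm2.1} in an auxiliary digraph: add a new vertex joined to the in-neighbours of the target and use $(t+1)$-strong connectivity. That yields $t$ paths whose relevant end-vertices are distinct neighbours of the target, which is precisely what the horizontal crossing needs.
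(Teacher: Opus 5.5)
\textbf{Generic cases.} These are sound. In configuration (B) with $v_a,v_b,v_c$ distinct, you do exactly what the paper does: delete $(u_p,v_c)$ from $\hat T_i^{(u_p)}$ and cross horizontally at its in-neighbour. In configuration (A), you cross at the out-neighbour of $r$ rather than at the branch vertex of $\hat T_i$, which works equally well.

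\textbf{Gap in (B) with $v_c=v_a$.} The gap is in the degenerate cases, which you correctly flag as the crux but do not close. Here $y=(u_w,v_a)$, and you require $t$ internally-disjoint $v_a$--$v_b$ paths whose first internal vertices are in-neighbours of $v_a$. Such vertices lie on $2$-cycles through $v_a$, and nothing supplies them. For example, if $H$ is a $4$-strong tournament, then $\tau_3(H)\ge 1$ but there are none at all. Your auxiliary-vertex Menger trick gives paths ending at in-neighbours of the target, but it does not make the same paths also reach the other terminal, so it does not repair this. In fact no adjacency is needed. Take each $v_a$--$v_b$ path $Q_s$ in $H(u_p)$ in full, so that $x$ is a leaf there. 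Cross horizontally at an internal vertex of $Q_s$; the paper uses the in-neighbour $v_{j_s}$ of $v_b$. Then join $(u_w,v_{j_s})$ to $y$ by a fan in $H(u_w)$. Since $y$ is the only terminal in that layer, disjointness is automatic.

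\textbf{Gap in (A) with $x=(u_w,v_a)$ (or symmetrically $y=(u_w,v_a)$).} You use one fan into $x$ and another fan into $y$ from the same sources. This gives no control over intersections between the $x$-path of tree $i$ and the $y$-path of tree $i'$, so internal disjointness is not established. Also, taking $z_i$ to be \emph{successors} of $v_a$ on $v_a$--$v_c$ paths does not make $(u_w,z_i)$ adjacent to $y$.

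The missing idea is to let each tree branch in $H(u_w)$ at a vertex from which one terminal is a single arc away. The other terminal is then reached through a \emph{single} fan that is forced to avoid the first terminal. Concretely:
\begin{enumerate}
\item Take $t+1$ internally-disjoint $v_a$--$v_c$ paths and discard the possible direct arc $v_av_c$.
\item Cross at the penultimate vertices $v_{j_1},\dots,v_{j_t}$; these are automatically distinct in-neighbours of $v_c$.
\item Reach $y$ by the arcs $(u_w,v_{j_s})y$.
\item Reach $x$ by a $(t+1)$-fan in $H(u_w)$ whose sources are $(u_w,v_{j_1}),\dots,(u_w,v_{j_t})$ together with $y$ itself.
\end{enumerate}
Making $y$ a fan source is what keeps the other fan paths off $y$, and this is where $(t+1)$-strong connectivity is genuinely used.

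\textbf{Smaller issue in (B) with $v_c=v_b$.} The same discarding step is missing here. Among your $t$ Menger paths, one may be the arc $v_av_b$, whose penultimate vertex is $v_a$. Crossing there gives $r$ out-degree $2$, so you need $t+1$ paths and must drop that one.
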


\begin{proof}
    Without loss of generality, let $\tau_{3}(H)=h\geq 1$. The argument is divided into the following two cases.
    \begin{ProofCase}
        $\{r,x\}\subseteq V(H(u_{p}))$ and $y\in V(H(u_{w}))$. The case $\{r,y\}\subseteq V(H(u_{p}))$ and $x\in V(H(u_{w}))$ is analogous, we thus omit the details. %Without loss of generality, 
        Suppose $r=(u_{p},v_{1})$, $x=(u_{p},v_{2})$, $y=(u_{w},v_{c})$. %The proof is split into three subcases.
        In the following argument, we can see that this assumption does not affect the correctness of our proof.
        \begin{SubProofCase}
            $(u_{p},v_{c})\notin \{r,x\}$. %where $(u_{p},v_{c})\in V(H(u_{p}))$ is the corresponding vertex of $y=(u_{w},v_{c})\in V(H(u_{w}))$. 
            Let $S'=\{r,x,(u_{p},v_{c})\}$. Since $H({u_{p}})\cong H$, we have $\tau_{3}(H({u_{p}}))=h$, so there exist $h$ pairwise internally-disjoint pendant $(S',r)$-trees in $H(u_{p})$, denoted by $T_{1}^{(u_{p})},T_{2}^{(u_{p})},\cdots,T_{h}^{(u_{p})}$. For each $s\in [h]$, let $(u_{p},v_{j_{s}})$ be the in-neighbor of $(u_{p},v_{c})$ in $T_{s}^{(u_{p})}$, where $v_{j_{s}}\notin \{v_{1},v_{2}\}$ by the definition of a pendant tree. We construct a tree $T_{s}'$ as follows:
            \begin{itemize}
                \item $V(T_{s}')=V(T_{s}^{(u_{p})})\cup V(\overrightarrow{P_{u_{p}u_{w}}^{(v_{j_{s}})}})\cup \{y\}\backslash \{(u_{p},v_{c})\}$;
                \item $A(T_{s}')=A(T_{s}^{(u_{p})})\cup A(\overrightarrow{P_{u_{p}u_{w}}^{(v_{j_{s}})}})\cup \{(u_{w},v_{j_{s}})y\}\backslash \{(u_{p},v_{j_{s}})(u_{p},v_{c})\}$.
            \end{itemize}
        \end{SubProofCase}
        %$(u_{p},v_{c})\in \{r,x\}$. 
        \begin{SubProofCase}
            $(u_{p},v_{c})=x$. By Lemma \ref{lem2.1}, we have $\kappa(H(u_{p}))\geq \tau_{3}(H(u_{p}))+1=h+1$, which implies that $H(u_{p})$ is an $(h+1)$-strong digraph. By Theorem \ref{thm2.1}, there exist $h+1$ pairwise internally-disjoint directed $r-x$ paths in $H(u_{p})$, denoted by $\overrightarrow{Q_{rx,1}},\overrightarrow{Q_{rx,2}},\cdots,\overrightarrow{Q_{rx,h+1}}$. At most one of these paths consists of only a single arc and we denote this directed path as $\overrightarrow{Q_{rx,h+1}}=rx$. All the remaining directed paths contain internal vertices, and let $(u_{p},v_{j_{s}})$ be the in-neighbor of $x$ in $\overrightarrow{Q_{rx,s}}$ for each $s\in [h]$, where $v_{j_{s}}\notin \{v_{1},v_{2}\}$. For each $s\in [h]$, we construct a tree $T_{s}'$ as follows:
            \begin{itemize}
                \item $V(T_{s}')=V(\overrightarrow{Q_{rx,s}})\cup V(\overrightarrow{P_{u_{p}u_{w}}^{(v_{j_{s}})}})\cup \{y\}$;
                \item $A(T_{s}')=A(\overrightarrow{Q_{rx,s}})\cup A(\overrightarrow{P_{u_{p}u_{w}}^{(v_{j_{s}})}})\cup \{(u_{w},v_{j_{s}})y\}$.
            \end{itemize}
            %It can be verified that these trees are $h$ pairwise internally-disjoint pendant $(S,r)$-trees.
        \end{SubProofCase}

        \begin{SubProofCase}
            $(u_{p},v_{c})=r$. %let $(u_{w},v_{f_{i}})$ be the vertex corresponding to $(u_{p},v_{f_{i}})$ in $H(u_{w})$. 
            By the same argument as in Subcase 1.2, we obtain $\overrightarrow{Q_{rx,1}},\overrightarrow{Q_{rx,2}},\cdots,\overrightarrow{Q_{rx,h+1}}$. By Lemmas~\ref{lem2.1} and~\ref{lem2.3}, there exists an $(h+1)$-fan including $h+1$ pairwise internally-disjoint directed paths, denoted by $\overrightarrow{R_{s}^{(u_{w})}}$ for $s\in [h+1]$, from $(u_{w},v_{j_{1}}),(u_{w},v_{j_{2}}),\cdots,(u_{w},v_{j_{h}}), (u_{w},v_{2})$ to $y$ in $H(u_{w})$, where $\overrightarrow{R_{s}^{(u_{w})}}$ is the directed $(u_{w},v_{j_{s}})-y$ path for $s\in[h]$, and $\overrightarrow{R_{h+1}^{(u_{w})}}$ is the directed $(u_{w},v_{2})-y$ path. For each $s\in [h]$, we construct a tree $T_{s}'$ as follows:%$\overrightarrow{R_{v_{j_{s}}v_{1},1}^{(u_{w})}},\overrightarrow{R_{v_{j_{s}}v_{1},2}^{(u_{w})}},\cdots,\overrightarrow{R_{v_{j_{s}}v_{1},\ell+1}^{(u_{w})}}$. 
            \begin{itemize}
                \item $V(T_{s}')=V(\overrightarrow{Q_{rx,s}})\cup V(\overrightarrow{P_{u_{p}u_{w}}^{(v_{j_{s}})}})\cup V(\overrightarrow{R_{s}^{(u_{w})}})$;
                \item $A(T_{s}')=A(\overrightarrow{Q_{rx,s}})\cup A(\overrightarrow{P_{u_{p}u_{w}}^{(v_{j_{s}})}})\cup A(\overrightarrow{R_{s}^{(u_{w})}})$.
            \end{itemize}
        \end{SubProofCase}
        It can be verified that across both subcases, the trees $T_{1}',T_{2}'\cdots,T_{h}'$ are $h$ pairwise internally-disjoint pendant $(S,r)$-trees.
    \end{ProofCase}
    \begin{ProofCase}
        $r\in V(H(u_{p}))$ and $\{x,y\}\subseteq V(H(u_{w}))$. Without loss of generality, let $r=(u_{p},v_{a})$, $x=(u_{w},v_{1})$, $y=(u_{w},v_{2})$. %The proof is split into two subcases.
        \begin{SubProofCase}
            $r\notin \{(u_{p},v_{1}),(u_{p},v_{2})\}$. Let $S''=\{r,(u_{p},v_{1}),(u_{p},v_{2})\}$. Since $\tau_{3}(H(u_{p}))=h$, there exist $h$ pairwise internally-disjoint pendant $(S'',r)$-trees in $H(u_{p})$, denoted by $T_{1}^{(u_{p})},T_{2}^{(u_{p})},\cdots,T_{h}^{(u_{p})}$. For each $s\in [h]$, let $(u_{p},v_{j_{s}})$ be the branch vertex of $T_{s}^{(u_{p})}$. We construct a tree $T_{s}'$ as follows:
            \begin{itemize}
                \item $V(T_{s}')=V(T_{s}^{(u_{p})}[v_{a},v_{j_{s}}])\cup V(\overrightarrow{P_{u_{p}u_{w}}^{(v_{j_{s}})}})\cup V(T_{s}^{(u_{w})}[v_{j_{s}},v_{1}])\cup V(T_{s}^{(u_{w})}[v_{j_{s}},v_{2}])$;
                \item $A(T_{s}')=A(T_{s}^{(u_{p})}[v_{a},v_{j_{s}}])\cup A(\overrightarrow{P_{u_{p}u_{w}}^{(v_{j_{s}})}})\cup A(T_{s}^{(u_{w})}[v_{j_{s}},v_{1}])\cup A(T_{s}^{(u_{w})}[v_{j_{s}},v_{2}])$.
            \end{itemize}
            \begin{SubProofCase}
                $r=(u_{p},v_{1})$. The case of $r=(u_{p},v_{2})$ is similar, and we omit the details. Analogous to Subcase 1.2, we obtain $h+1$ internally-disjoint directed $v_{a}-v_{2}$ paths, denoted by $\overrightarrow{Q_{v_{a}v_{2},1}^{(u_{p})}},\overrightarrow{Q_{v_{a}v_{2},2}^{(u_{p})}},\cdots,\overrightarrow{Q_{v_{a}v_{2},h+1}^{(u_{p})}}$. At most one of these paths consists of only a single arc and we denote this directed path as $\overrightarrow{Q_{v_{a}v_{2},h+1}^{(u_{p})}}=r(u_{p},v_{2})$. All the remaining directed paths contain internal vertices, and let $(u_{p},v_{j_{s}})$ be the in-neighbor of $(u_{p},v_{2})$ in $\overrightarrow{Q_{v_{a}v_{2},h}^{(u_{p})}}$ for each $s\in [h]$, where $v_{j_{s}}\notin \{v_{a},v_{2}\}$. By Lemmas~\ref{lem2.1} and~\ref{lem2.3}, there exists an $(h+1)$-fan including $h+1$ pairwise internally-disjoint directed paths, denoted by $\overrightarrow{R_{s}^{(u_{w})}}$ for $s\in [h+1]$, from $(u_{w},v_{j_{1}}),(u_{w},v_{j_{2}}),\cdots,(u_{w},v_{j_{h}}), y$ to $x$ in $H(u_{w})$, where $\overrightarrow{R_{s}^{(u_{w})}}$ is the directed $(u_{w},v_{j_{1}})-x$ path for $s\in[h]$, and $\overrightarrow{R_{h+1}^{(u_{w})}}$ is the directed $y-x$ path. For each $s\in [h]$, we construct a tree $T_{s}'$ as follows:
            \begin{itemize}
                \item $V(T_{s}')=%V(\overrightarrow{Q_{v_{a}v_{2},s}^{(u_{p})}[v_{a},v_{j_{s}}]})
                V(\overrightarrow{Q_{v_{a}v_{2},s}^{(u_{p})}[v_{a},v_{j_{s}}]})\cup V(\overrightarrow{P_{u_{p}u_{w}}^{(v_{j_{s}})}})\cup V(\overrightarrow{R_{s}^{(u_{w})}})\cup \{y\}$;
                \item $A(T_{s}')=%A(\overrightarrow{Q_{v_{a}v_{2},s}^{(u_{p})}[v_{a},v_{j_{s}}]})
                A(\overrightarrow{Q_{v_{a}v_{2},s}^{(u_{p})}[v_{a},v_{j_{s}}]})\cup A(\overrightarrow{P_{u_{p}u_{w}}^{(v_{j_{s}})}})\cup A(\overrightarrow{R_{s}^{(u_{w})}})\cup \{(u_{w},v_{j_{s}})y\}$.
            \end{itemize}
            \end{SubProofCase}
        \end{SubProofCase}
        It can be verified that across both subcases, these are $h$ pairwise internally-disjoint pendant $(S,r)$-trees.
    \end{ProofCase}
    Based on the above arguments, it can be conclude that for any $S=\{r,x,y\}\subseteq V(D\square H)$ with $S_{D}=\{u_{p},u_{w}\}$ and the projection of $r$ onto $D$ is exactly $u_{p}$, there exist $\tau_{3}(H)$ pairwise internally-disjoint pendant $(S,r)$-tree in $\overrightarrow{P_{u_{p}u_{w}}}\square H$. %Hence, we have $\tau_{3}(\overrightarrow{P_{u_{p}u_{w}}}\square H)\geq \tau_{3}(H)$.
\end{proof}

%From the proof above, we immediately obtain the following corollary.
%\begin{corollary}
    %Let $D$ and $H$ be two strong digraphs, and let $S=\{r,x,y\}\subseteq V(D\square H)$. If $S_{D}=\{u_{p},u_{w}\}\subseteq V(D)$ with $p\neq w$, then we have
    %\begin{align*}
        %\tau_{3}(\overleftrightarrow{P_{u_{p}u_{w}}}\square H)\geq \tau_{3}(H).
    %\end{align*}
    %Moreover, the bound is sharp.
%\end{corollary}

Next, we give the proof of Theorem \ref{thm1.1}. For any two strong digraphs $D$ and $H$, we need to show that
\begin{align*}
    \tau_{3}(D\square H)\geq \tau_{3}(D)+\tau_{3}(H).
    %\tau_{3}(D\square H)\geq \min \{3\lfloor\frac{\tau_{3}(D)}{2}\rfloor,3\lfloor\frac{\tau_{3}(H)}{2}\rfloor\}.
\end{align*}
By the symmetry of Cartesian product digraphs, we may assume $\tau_{3}(H)\geq \tau_{3}(D)\geq 1$. Let $\tau_{3}(D)=\ell$ and $\tau_{3}(H)=h$. From the definition of $\tau_{3}(D\square H)$, it suffices to show that $\tau_{S,r}(D\square H)\geq \ell+h$ for any $S=\{r,x,y\}\subseteq V(D\square H)$, i.e., we need to find $\ell+h$ pairwise internally-disjoint pendant $(S,r)$-trees in $D\square H$.
%By the symmetry of Cartesian product digraphs, we may assume $\tau_{3}(H)\geq \tau_{3}(D)\geq 1$. Hence, we only need to show that $\tau_{3}(D\square H)\geq 3\lfloor\frac{\tau_{3}(D)}{2}\rfloor$. Let $\tau_{3}(D)=\ell$ and $\tau_{3}(H)=h$. From the definition of $\tau_{3}(D\square H)$, it suffices to show that $\tau_{S}(D\square H)\geq 3\lfloor\frac{\ell}{2}\rfloor$ for any $S=\{r,x,y\}\subseteq V(D\square H)$ and $|S|=3$, i.e., we need to find $3\lfloor\frac{\ell}{2}\rfloor$ pairwise internally-disjoint pendant $(S,r)$-trees in $D\square H$. 
%Let $r=(u_{p},v_{a})$, $x=(u_{q},v_{b})$ and $y=(u_{w},v_{c})$, thus we use $S_{D}=\{u_{p},u_{q},u_{r}\}$ to denote the {\em projection} of $S$ onto $D(v_{a})$. 
We proceed our proof by the following three lemmas.

\begin{lemma}\label{lem3.1}
    If all vertices of $S$ lie in the same $H(u_{i})~(i\in [n])$, then there exist $\ell+h+1$ pairwise internally-disjoint pendant $(S,r)$-trees in $D\square H$.
\end{lemma}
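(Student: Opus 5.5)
The plan is to build $h$ trees inside the layer $H(u_p)$ and then $\ell+1$ further trees that leave $H(u_p)$ at the root and come back only at $x$ and $y$. Since $D\square H$ contains a copy of $H$ on the layer containing $S$, the first family is immediate. The extra $\ell+1$ trees come from the connectivity of $D$, via Lemmas \ref{lem2.1} and \ref{lem2.3}.

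\emph{Setup and the first $h$ trees.} Write $r=(u_p,v_a)$, $x=(u_p,v_b)$ and $y=(u_p,v_c)$, so that $S\subseteq V(H(u_p))$ and $S_H=\{v_a,v_b,v_c\}$. Since $H(u_p)\cong H$ and $\tau_3(H)=h$, there are $h$ pairwise internally-disjoint pendant $(S,r)$-trees $T_1,\dots,T_h$ lying entirely in $H(u_p)$. Every vertex of these trees has first coordinate $u_p$.

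\emph{The remaining $\ell+1$ trees.} By Lemma \ref{lem2.1} with $k=3$, $\kappa(D)\ge \ell+1$. By Lemma \ref{lem2.2}, $d_D^+(u_p)\ge \ell+2$, so we can choose $\ell+1$ distinct out-neighbours $z_1,\dots,z_{\ell+1}$ of $u_p$ in $D$. Put $Z=\{z_1,\dots,z_{\ell+1}\}$. Lemma \ref{lem2.3} gives an $(\ell+1)$-fan of pairwise internally-disjoint directed paths $\overrightarrow{R_s}$ from $z_s$ to $u_p$. We want each $\overrightarrow{R_s}$ to meet $Z\cup\{u_p\}$ only in its two ends. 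The first thing I would try is to take a fan with minimum total length and argue by rerouting/shortcutting. If that fails, I would instead apply Menger's theorem in an auxiliary digraph with a new source joined to all of $Z$.

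Fix a minimal out-tree $\hat{Y}$ of $H$ rooted at $v_a$ that contains $v_b$ and $v_c$; it exists because $H$ is strong. For each $s\in[\ell+1]$, the tree $T_{h+s}$ consists of:
\begin{itemize}
\item the arc $r(z_s,v_a)$;
\item the copy $\hat{Y}^{(z_s)}$ of $\hat{Y}$ in $H(z_s)$;
\item the copies $\overrightarrow{R_s^{(v_b)}}$ in $D(v_b)$ and $\overrightarrow{R_s^{(v_c)}}$ in $D(v_c)$, which end at $x$ and $y$ respectively.
\end{itemize}
If $v_b$ lies on the path of $\hat{Y}$ from $v_a$ to $v_c$, the vertex $(z_s,v_b)$ simply gets out-degree $2$, which is allowed since it is not a terminal.

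We then check that $T_{h+s}$ is a pendant $(S,r)$-tree. The root $r$ has out-degree one, and $x$ and $y$ are entered only by the last arcs of the two copies of $\overrightarrow{R_s}$. The pieces are vertex-disjoint apart from their attachment points: $H(z_s)$ meets $D(v_b)$ only at $(z_s,v_b)$, the interiors of the $\overrightarrow{R_s}$ avoid $z_s$, and $v_b\ne v_c$. So $T_{h+s}$ is an out-tree.

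\emph{Internal disjointness.} Apart from $S$, every vertex of $T_{h+s}$ has first coordinate in $\{z_s\}\cup \mathrm{int}(\overrightarrow{R_s})$. These sets avoid $u_p$ and are pairwise disjoint over $s$, by the fan property and the choice that no $\overrightarrow{R_t}$ passes through another $z_s$. Hence the trees $T_{h+s}$ are pairwise internally disjoint and also disjoint from $T_1,\dots,T_h$ outside $S$; arc-disjointness follows from vertex-disjointness. This gives $h+\ell+1$ trees in total. The main obstacle is the fan-cleaning step, making sure no fan path passes through another start vertex $z_t$; everything else is bookkeeping.
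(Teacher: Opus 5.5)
Your proposal is correct and uses the same construction as the paper. The paper builds $h$ trees inside $H(u_p)$, and $\ell+1$ further trees that leave $r$ by an arc of $D(v_a)$ to $(z_s,v_a)$, follow a copy of an out-tree of $H$ in $H(z_s)$, and return to $x$ and $y$ along copies of an $(\ell+1)$-fan in $D(v_b)$ and $D(v_c)$; your use of an arbitrary minimal out-tree $\hat{Y}$ instead of a copy of a pendant tree of $H$ is harmless. The fan-cleaning step you flag is not a real obstacle, because in the standard Fan Lemma the paths meet pairwise only in $u_p$, so $z_t\in V(\overrightarrow{R_t})$ cannot lie on $\overrightarrow{R_s}$ for $s\neq t$, and your auxiliary-source Menger argument is exactly the proof of that lemma.
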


\begin{proof}
    Without loss of generality, let $S=\{r,x,y\}\subseteq V(H(u_{p}))$ with $r=(u_{p},v_{1})$, $x=(u_{p},v_{2})$ and $y=(u_{p},v_{3})$. On the one hand, since $H(u_{p})\cong H$, we have $\tau_{3}(H(u_{p}))=h$. Thus, there exist $h$ pairwise internally-disjoint pendant $(S,r)$-trees in $H(u_{p})$, denoted by $T_{1}',T_{2}',\cdots,T_{h}'$.
    
    On the other hand, since $D(v_{1})\cong D$, we have $\delta^{0}(D(v_{1}))\geq \tau_{3}(D(v_{1}))+2=\ell+2$ by Lemma \ref{lem2.2}. It means that $r$ has at least $\ell+2$ out-neighbors in $D(v_{1})$, say $(u_{i_{1}},v_{1}),(u_{i_{2}},v_{1}),\cdots,(u_{i_{\ell+2}},v_{1})$. In the sequel, we only use $\ell+1$ of these out-neighbors. For each $t\in[\ell+1]$, let $S_{t}=\{(u_{i_{t}},v_{1}),(u_{i_{t}},v_{2}),(u_{i_{t}},v_{3})\}$, and let $T_{1}^{(u_{i_{t}})}$ be a pendant $(S_{t},(u_{i_{t}},v_{1}))$-tree in $H(u_{i_{t}})$, which corresponds to $T_{1}'$ in $H(u_{p})$. By Lemmas~\ref{lem2.1} and~\ref{lem2.3}, $D(v_{2})$ is an $(\ell+1)$-strong digraph and hence %$\kappa(D(v_{2}))\geq\ell+1$, so $D(v_{2})$ is an $(\ell+1)$-strong digraph. By Lemma \ref{lem2.3}, $D(v_{2})$ 
    contains an $(\ell+1)$-fan consisting of $\ell+1$ pairwise internally-disjoint directed paths, denoted by $\overrightarrow{R_{t}^{(v_{2})}}$ for $t\in [\ell+1]$, from $(u_{i_{1}},v_{2}),(u_{i_{2}},v_{2}),\cdots,(u_{i_{\ell+1}},v_{2})$ to $(u_{p},v_{2})$, %in $D(v_{2})$, 
    where each $\overrightarrow{R_{t}^{(v_{2})}}$ is the directed $(u_{i_{t}},v_{2})-(u_{p},v_{2})$ path. %$\overrightarrow{P_{u_{i_{1}}u_{p}}^{(v_{2})}},\overrightarrow{P_{u_{i_{2}}u_{p}}^{(v_{2})}},\cdots,\overrightarrow{P_{u_{i_{\ell+1}}u_{p}}^{(v_{2})}}$. 
    Similarly, $D(v_{3})$ contains $\ell+1$ pairwise internally-disjoint directed paths $\overrightarrow{R_{t}^{(v_{3})}}$ for $t\in [\ell+1]$. %where each $\overrightarrow{P_{u_{i_{t}}u_{p}}^{(v_{3})}}$ from $(u_{i_{1}},v_{3}),(u_{i_{2}},v_{3}),\cdots,(u_{i_{\ell+1}},v_{3})$ to $(u_{p},v_{3})$ in $D(v_{3})$, respectively. 
    We thus define, for each $t\in [\ell+1]$, a tree $T_{t}^{*}$, such that
    \begin{itemize}
        \item $V(T_{t}^{*})=\{r\}\cup V(T_{1}^{(u_{i_{t}})})\cup V(\overrightarrow{R_{t}^{(v_{2})}})\cup V(\overrightarrow{R_{t}^{(v_{3})}})$;
        \item $A(T_{t}^{*})=\{r(u_{i_{t}},v_{1})\}\cup A(T_{1}^{(u_{i_{t}})})\cup A(\overrightarrow{R_{t}^{(v_{2})}})\cup A(\overrightarrow{R_{t}^{(v_{3})}})$.
    \end{itemize}
    One can verify that $T_{1}',T_{2}',\cdots,T_{h}'$ and $T_{1}^{*},T_{2}^{*},\cdots,T_{\ell+1}^{*}$ form $\ell+h+1$ pairwise internally-disjoint pendant $(S,r)$-trees in $D\square H$. %Hence, we obtain
    %\begin{align*}
        %\tau_{S,r}(D\square H)\geq h+\ell+1\geq 3\lfloor\frac{\ell}{2}\rfloor,
    %\end{align*}
    %we have used the earlier assumption that $h\geq \ell\geq 1$ in the last inequality. 
    This Lemma is now proved.
\end{proof}

\begin{lemma}\label{lem3.2}
    If exactly two vertices of $S$ lie in the same $H(u_{i})~(i\in [n])$, then there exist $\ell+h$ pairwise internally-disjoint pendant $(S,r)$-trees in $D\square H$.
\end{lemma}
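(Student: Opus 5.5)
The plan is to combine Proposition~\ref{pro3.1}, which supplies $h$ trees, with $\ell$ further trees that are routed through other $H$-layers, as in the proof of Lemma~\ref{lem3.1}. Here $S_D=\{u_p,u_w\}$ with $u_p$ the projection of $r$. Write $r=(u_p,v_a)$.

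\textbf{Choosing the paths.} By Lemma~\ref{lem2.1}, $D$ is $(\ell+1)$-strong. By Theorem~\ref{thm2.1}, there are $\ell+1$ internally-disjoint $u_p-u_w$ paths $\overrightarrow{P_0},\dots,\overrightarrow{P_\ell}$ in $D$. If one of them is the single arc $u_pu_w$, I call it $\overrightarrow{P_0}$. Then for every $t\in[\ell]$ the path $\overrightarrow{P_t}$ has an in-neighbour $u'_t\neq u_p$ of $u_w$, and the vertices $u'_1,\dots,u'_\ell$ are distinct.

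\textbf{The first $h$ trees.} Apply Proposition~\ref{pro3.1} with the path $\overrightarrow{P_0}$ to get $h$ internally-disjoint pendant $(S,r)$-trees inside $\overrightarrow{P_0}\square H$. Their non-terminal vertices lie in $V(\overrightarrow{P_0})\times V(H)$. The remaining $\ell$ trees will use only columns $H(u)$ with $u\in V(\overrightarrow{P_t})\setminus\{u_p,u_w\}$ for $t\ge1$, apart from arcs entering terminals. This gives disjointness from the first family, and disjointness among the new trees because the $\overrightarrow{P_t}$ are internally disjoint.

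\textbf{Case A: $x,y\in H(u_w)$.} Write $x=(u_w,v_b)$ and $y=(u_w,v_c)$. For $t\in[\ell]$, tree $T^*_t$ consists of four pieces:
\begin{itemize}
\item the path $\overrightarrow{P_t^{(v_a)}}[u_p,u'_t]$;
\item a pendant $(\{(u'_t,v_a),(u'_t,v_b),(u'_t,v_c)\},(u'_t,v_a))$-tree in $H(u'_t)$, which exists since $\tau_3(H)=h\ge1$;
\item the arc $(u'_t,v_b)x$;
\item the arc $(u'_t,v_c)y$.
\end{itemize}
If $v_a\in\{v_b,v_c\}$, say $v_a=v_b$, I replace the $H$-tree by the single arc $(u'_t,v_a)x$ together with a $v_a-v_c$ path in $H(u'_t)$, and then attach $y$. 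A pendant tree with three terminals is required only at the root of the whole tree, so this is legitimate.

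\textbf{Case B: two of the terminals share the column $H(u_p)$.} Say $r=(u_p,v_a)$ and $x=(u_p,v_b)$, with $y=(u_w,v_c)$; the case with $x$ and $y$ swapped is symmetric. I start $T^*_t$ as in Case A: go along $\overrightarrow{P_t^{(v_a)}}$ to $(u'_t,v_a)$, split in $H(u'_t)$ toward $(u'_t,v_b)$ and $(u'_t,v_c)$, and send $(u'_t,v_c)$ to $y$ by an arc. It remains to return from $(u'_t,v_b)$ to $x$.

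For this return I use a fan in $D(v_b)$ into $(u_p,v_b)$ from the vertices $(u'_t,v_b)$, given by Lemma~\ref{lem2.3}. Equivalently, I use the reverse Menger paths $u_w\to u_p$ and choose the out-neighbour of $u_p$ on each as the column for the split.

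\textbf{Main obstacle.} The fan in $D(v_b)$ must avoid the column of $\overrightarrow{P_0}$, where the Proposition~\ref{pro3.1} trees live, and the columns used by the other $T^*_s$. My first attempt is to choose the columns more cleverly. I would take $\ell+1$ out-neighbours of $u_p$, as in Lemma~\ref{lem3.1}, and make the $h$ Proposition trees live in a single column pair $H(u_p)\cup H(u_w)$ plus one path column. I would then argue by counting: the fan has $\ell+1$ paths, so discarding the one hitting the reserved column still leaves $\ell$ paths. The same counting trick (take $\ell+1$ objects, sacrifice one) handles the degenerate subcases $(u_p,v_c)\in\{r,x\}$.
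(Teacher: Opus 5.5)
Your Case A is correct and is the paper's Case~2, including the degenerate branch $v_a\in\{v_b,v_c\}$. The gap is in Case B.

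\textbf{Why Case B does not close.} The tree you describe there is exactly the paper's Subcase~1.1: go along $\overrightarrow{P_t^{(v_a)}}$ to $(u'_t,v_a)$, split in $H(u'_t)$, send an arc into $y$, and use a fan in $D(v_b)$ back to $x$. Your disjointness claim is that the new trees use only the columns of internal vertices of the $\overrightarrow{P_t}$, $t\ge 1$. This is false in Case B. The return paths in $D(v_b)$ may pass through $(u_w,v_b)$, and through $(u,v_b)$ for every internal vertex $u$ of $\overrightarrow{P_0}$. From the statement of Proposition~\ref{pro3.1} you only know that its trees lie somewhere in $V(\overrightarrow{P_0})\times V(H)$.

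Your repair does not work either.
\begin{itemize}
\item A fan path cannot be discarded. Each one serves a specific $T^*_t$, and there is no spare tree: $D$ is only guaranteed to be $(\ell+1)$-strong, so besides $\overrightarrow{P_0}$ you have just $\ell$ internally disjoint $u_p$--$u_w$ paths.
\item If $u_pu_w\notin A(D)$, then $\overrightarrow{P_0}$ has internal vertices, and several fan paths, not just one, may meet its columns. Nothing guarantees an $\ell$-fan in $D(v_b)$ avoiding all of $(V(\overrightarrow{P_0})\setminus\{u_p\})\times\{v_b\}$.
\item Switching to out-neighbours of $u_p$, as in Lemma~\ref{lem3.1}, loses the arc into $y$, because those vertices need not be in-neighbours of $u_w$.
\end{itemize}

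\textbf{The missing idea.} Disjointness must be checked vertex by vertex inside each layer, using how the trees of Proposition~\ref{pro3.1} are built, not just its statement. Those trees pass from $H(u_p)$ to $H(u_w)$ only through layers $D(v_{j_s})$ with $v_{j_s}\notin\{v_a,v_b\}$. So they meet $D(v_b)$ only in $\{x,y\}$, and the return fan is harmless whatever columns it crosses. Distinct $T^*_t$ meet inside $D(v_b)$ only in terminals, by the fan property, since each $T^*_s$ enters $D(v_b)$ only at its own source $(u'_s,v_b)$.

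The ``sacrifice'' that actually works is to add the vertex to be avoided as an extra source of an $(\ell+1)$-fan. This is legitimate because $D(v_b)$ is $(\ell+1)$-strong, and no path is dropped.
\begin{itemize}
\item For $v_c=v_b$, the paper adds $y$ as this extra source.
\item For $v_c=v_a$, the tree $T^*_t$ runs along all of $\overrightarrow{P_t^{(v_a)}}$ to $y$ and branches at $(u'_t,v_a)$. The return fan may then use $(u_w,v_b)$. This is why Subcase~1.3 of Proposition~\ref{pro3.1} builds its fan into $y$ inside $H(u_w)$ with $(u_w,v_b)$ as a sacrificed source.
\end{itemize}

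With only the statement of Proposition~\ref{pro3.1}, your approach can be completed when $u_pu_w\in A(D)$: take $\overrightarrow{P_0}=u_pu_w$ and sacrifice $(u_w,v_b)$ in the return fan. That does not cover the case $u_pu_w\notin A(D)$, so as written Case B and its degenerate subcases remain unproved.
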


\begin{proof}
    Let $S=\{r,x,y\}\subseteq V(D\square H)$, and the argument is divided into the following two cases.
    \begin{ProofCase}
        Only $r$ and $x$ belong to the same $H(u_{i})$. 
        The argument for the case that $r$ and $y$ belong to the same $H(u_{i})$ is similar and so we omit the details.
        
        Without loss of generality, let $r=(u_{p},v_{1})$, $x=(u_{p},v_{2})$ and $y=(u_{w},v_{c})$, where $u_{p},u_{w}$ are distinct. %On the one hand, On the other hand, 
        By Lemma \ref{lem2.1} and Theorem \ref{thm2.1}, %we have $\kappa(D(v_{1}))\geq \ell+1$, which implies that 
        there exist $\ell+1$ pairwise internally-disjoint directed paths from $r$ to $(u_{w},v_{1})$ in $D(v_{1})$, denoted by $\overrightarrow{P_{u_{p}u_{w},1}^{(v_{1})}},\overrightarrow{P_{u_{p}u_{w},2}^{(v_{1})}},\cdots,\overrightarrow{P_{u_{p}u_{w},\ell+1}^{(v_{1})}}$. Among these $\ell+1$ directed paths, at most one consists of only a single arc and we denote this directed path as $\overrightarrow{P_{u_{p}u_{w},\ell+1}^{(v_{1})}}=(u_{p},v_{1})(u_{w},v_{1})$. All the remaining directed paths contain internal vertices, where $(u_{i_{t}},v_{1})$ denotes the in-neighbor of $(u_{w},v_{1})$ in each of these paths for $t\in [\ell]$. %Furthermore, let $\overrightarrow{P_{u_{p}u_{i_{t}}}^{(v_{1})}}=\overrightarrow{P_{u_{p}u_{w},1}^{(v_{1})}}-(u_{i_{t}},v_{1})(u_{w},v_{1})$. %Let $y'=(u_{p},v_{c})$ be the vertex corresponding to $y$ in $H(u_{p})$, $r'=(u_{w},v_{1}),x'=(u_{w},v_{2})$ be the vertices corresponding to $r,x$ in $H(u_{w})$, $r_{i},x_{i},y_{i}$ be the vertices corresponding to $r,x,y$ in $H(u_{q_{i}})$.
        %We first consider the Cartesian product of $\overrightarrow{P_{u_{p}u_{w},\ell+1}^{(v_{1})}}\square H$. 
        By Proposition \ref{pro3.1}, there exist $h$ pairwise internally-disjoint pendant $(S,r)$-trees in $\overrightarrow{P_{u_{p}u_{w},\ell+1}^{(v_{1})}}\square H$, denoted by $T_{1}',T_{2}',\cdots,T_{h}'$.
        
        We next construct $\ell$ additional pendant $(S,r)$-trees using arcs from $D(v_{2})$ and from $(\bigcup_{t=1}^{\ell}\overrightarrow{P_{u_{p}u_{w},t}^{(v_{1})}})\square H$. Moreover, these $\ell$ trees are pairwise internally-disjoint from each other and from the previously constructed $T_{1}',T_{2}',\cdots,T_{h}'$. %to construct $\ell$ internally-disjoint pendant $(S,r)$-trees, whose are also internally-disjoint from $T_{1},T_{2}\cdots,T_{h}$.
        %we consider the Cartesian product of $(\bigcup_{t=1}^{\ell}\overrightarrow{P_{u_{p}u_{w},t}^{(v_{1})}})\square H$.
        \begin{SubProofCase}
            $(u_{i_{t}},v_{c})\notin \{(u_{i_{t}},v_{1}),(u_{i_{t}},v_{2})\}$. For each $t\in [\ell]$, let $S_{t}=\{(u_{i_{t}},v_{1}),(u_{i_{t}},v_{2}),(u_{i_{t}},v_{c})\}$ be a terminal vertex set in $H(u_{i_{t}})$. Since $H(u_{i_{t}})\cong H$, there exists a pendant $(S_{t},(u_{i_{t}},v_{1}))$-tree $T_{1}^{(u_{i_{t}})}$ in $H(u_{i_{t}})$, which corresponds to $T_{1}^{(u_{p})}$ as denoted in Case 1 of Proposition \ref{pro3.1} for $H(u_{p})$. %Meanwhile, let $(u_{i_{t}},v_{j_{1}})$ be the in-neighbor of $(u_{i_{t}},v_{c})$ in $T_{1}^{(u_{i_{t}})}$.
            %By Lemma \ref{lem2.1}, $D(v_{1})$ is an $(\ell+1)$-strong digraph since $\kappa(D(v_{1}))\geq \ell+1$. It can be inferred that there exist $\ell+1$ pairwise internally-disjoint directed paths from $(u_{i_{1}},v_{2}),(u_{i_{2}},v_{2}),\cdots,(u_{i_{\ell}},v_{2}),(u_{w},v_{2})$ to $(u_{p},v_{2})$, respectively, denoted by $\overrightarrow{P}_{1},\overrightarrow{P}_{2},\cdots,\overrightarrow{P}_{\ell+1}$. 
            Since $\kappa(D(v_{2}))\geq \ell+1$, by Lemma \ref{lem2.3}, there exist $\ell$ pairwise internally-disjoint directed paths from $(u_{i_{1}},v_{2}),(u_{i_{2}},v_{2}),\cdots,(u_{i_{\ell}},v_{2})$ to $(u_{p},v_{2})$, respectively. These paths are denoted by $\overrightarrow{R_{1}^{(v_{2})}},\overrightarrow{R_{2}^{(v_{2})}},\cdots,\overrightarrow{R_{\ell}^{(v_{2})}}$. For each $t\in[\ell]$, we define a tree $T_{t}^{*}$ as follows:
            \begin{itemize}
                \item $V(T_{t}^{*})=V(\overrightarrow{P_{u_{p}u_{w},t}^{(v_{1})}[u_{p},u_{i_{t}}]}) \cup V(T_{1}^{(u_{i_{t}})})\cup V(\overrightarrow{R_{t}^{(v_{2})}}) \cup \{y\}$;
                \item $A(T_{t}^{*})=A(\overrightarrow{P_{u_{p}u_{w},t}^{(v_{1})}[u_{p},u_{i_{t}}]}) \cup A(T_{1}^{(u_{i_{t}})}) \cup A(\overrightarrow{R_{t}^{(v_{2})}}) \cup \{(u_{i_{t}},v_{c})y\}$.
            \end{itemize}
            It can be checked that these are $\ell$ pairwise internally-disjoint pendant $(S,r)$-trees.%, and that they are also internally-disjoint from the previously constructed $T_{1}',T_{2}',\cdots,T_{h}'$.
        \end{SubProofCase}

        \begin{SubProofCase}
            $(u_{i_{t}},v_{c})=(u_{i_{t}},v_{2})$. For each $t\in [\ell]$, let $\overrightarrow{Q_{v_{1}v_{2},1}^{(u_{i_{t}})}}$ be a directed path from $(u_{i_{t}},v_{1})$ to $(u_{i_{t}},v_{2})$ in $H(u_{i_{t}})$, which corresponds to $\overrightarrow{Q_{rx,1}}$ as denoted in Subcase 1.2 of Proposition \ref{pro3.1} for $H(u_{p})$.
            %By Lemma \ref{lem2.1}, we have $\kappa(H)\geq h+1$, which implies that $H(u_{q_{i}})$ is an $(h+1)$-strong and there always exist an internally-disjoint paths from $(u_{q_{i}},v_{1})$ to $(u_{q_{i}},v_{2})$, respectively, denoted by $\overrightarrow{R}_{1},\overrightarrow{R}_{2},\cdots,\overrightarrow{R}_{\ell}$. 
            Since $D(v_{2})$ is an $(\ell+1)$-strong digraph, there exist $\ell+1$ pairwise internally-disjoint directed paths from $(u_{i_{1}},v_{2}),(u_{i_{2}},v_{2}),\cdots,(u_{i_{\ell}},v_{2}),y$ to $(u_{p},v_{2})$, respectively. These paths are denoted by $\overrightarrow{R_{1}^{(v_{2})}},\overrightarrow{R_{2}^{(v_{2})}},\cdots,\overrightarrow{R_{\ell+1}^{(v_{2})}}$. For each $t\in [\ell]$, we define a tree $T_{t}^{*}$ such that
            \begin{itemize}
                \item $V(T_{t}^{*})=V(\overrightarrow{P_{u_{p}u_{w},t}^{(v_{1})}[u_{p},u_{i_{t}}]}) \cup V(\overrightarrow{Q_{v_{1}v_{2},1}^{(u_{i_{t}})}})\cup V(\overrightarrow{R_{t}^{(v_{2})}}) \cup \{y\}$;
                \item $A(T_{t}^{*})=A(\overrightarrow{P_{u_{p}u_{w},t}^{(v_{1})}[u_{p},u_{i_{t}}]}) \cup A(\overrightarrow{Q_{v_{1}v_{2},1}^{(u_{i_{t}})}})\cup A(\overrightarrow{R_{t}^{(v_{2})}}) \cup \{(u_{i_{t}},v_{c})y\}$.
            \end{itemize}
            It can be checked that these are $\ell$ pairwise internally-disjoint pendant $(S,r)$-trees.%, and that they are also internally-disjoint from the previously constructed $T_{1}',T_{2}',\cdots,T_{h}'$.
        \end{SubProofCase}

        \begin{SubProofCase}
            $(u_{i_{t}},v_{c})=(u_{i_{t}},v_{1})$. By the same argument as in Subcase 1.2, we obtain $\overrightarrow{Q_{v_{1}v_{2},1}^{(u_{i_{1}})}},\overrightarrow{Q_{v_{1}v_{2},1}^{(u_{i_{2}})}},\cdots,\overrightarrow{Q_{v_{1}v_{2},1}^{(u_{i_{\ell}})}}$ and $\overrightarrow{R_{1}^{(v_{2})}},\overrightarrow{R_{2}^{(v_{2})}},\cdots,\overrightarrow{R_{\ell}^{(v_{2})}}$. %$\overrightarrow{R}_{1},\overrightarrow{R}_{2},\cdots,\overrightarrow{R}_{\ell}$ and $\overrightarrow{P}_{1},\overrightarrow{P}_{2},\cdots,\overrightarrow{P}_{\ell+1}$. Without loss of generality, let $\overrightarrow{P}_{1},\overrightarrow{P}_{2},\cdots,\overrightarrow{P}_{\ell}$ are contained at least one internal vertex, and we denote $(u_{q_{i}},v_{g_{i}})$ be the out-neighbor of $(u_{q_{i}},v_{1})$ in $\overrightarrow{P}_{i}$, respectively. Since $H(u_{w})$ is an $(h+1)$-strong digraph and there exist $h+1$ pairwise internally-disjoint directed paths from $(u_{w},v_{g_{1}}),(u_{w},v_{g_{2}}),\cdots,(u_{w},v_{g_{\ell}}),(u_{w},v_{2})$ to $(u_{w},v_{1})$, respectively, denoted by $\overrightarrow{Q}_{1},\overrightarrow{Q}_{2},\cdots,\overrightarrow{Q}_{\ell+1}$. 
            For each $t\in [\ell]$, we define a tree $T_{t}^{*}$ such that
            \begin{itemize}
                \item $V(T_{t}^{*})=V(\overrightarrow{P_{u_{p}u_{w},t}^{(v_{1})}})\cup V(\overrightarrow{Q_{v_{1}v_{2},1}^{(u_{i_{t}})}})\cup V(\overrightarrow{R_{t}^{(v_{2})}})$;
                \item $A(T_{t}^{*})=A(\overrightarrow{P_{u_{p}u_{w},t}^{(v_{1})}})\cup A(\overrightarrow{Q_{v_{1}v_{2},1}^{(u_{i_{t}})}})\cup A(\overrightarrow{R_{t}^{(v_{2})}})$.
            \end{itemize}
            It can be checked that these are $\ell$ pairwise internally-disjoint pendant $(S,r)$-trees. 
        \end{SubProofCase}
    \end{ProofCase}

    \begin{ProofCase}
        Only $x$ and $y$ belong to the same $H(u_{i})$. Without loss of generality, let $r=(u_{p},v_{a})$, $x=(u_{w},v_{1})$ and $y=(u_{w},v_{2})$. Analogous to Case 1, we obtain $\ell+1$ directed $u_{p}-u_{w}$ paths in $D(v_{a})$, denoted by $\overrightarrow{P_{u_{p}u_{w},1}^{(v_{a})}},\overrightarrow{P_{u_{p}u_{w},2}^{(v_{a})}},\cdots,\overrightarrow{P_{u_{p}u_{w},\ell+1}^{(v_{a})}}$. Let $\overrightarrow{P_{u_{p}u_{w},\ell+1}^{(v_{a})}}=(u_{p},v_{a})(u_{w},v_{a})$, and for each $t\in[\ell]$, let $(u_{i_{t}},v_{a})$ be the in-neighbor of $(u_{w},v_{a})$ in $\overrightarrow{P_{u_{p}u_{w},t}^{(v_{a})}}$. %We further define $\overrightarrow{P_{u_{p}u_{i_{t}}}^{(v_{a})}}=\overrightarrow{P_{u_{p}u_{w},t}^{(v_{a})}}-(u_{i_{t}},v_{a})(u_{w},v_{a})$. 
        By Proposition \ref{pro3.1}, there exist $h$ pairwise internally-disjoint pendant $(S,r)$-trees in $\overrightarrow{P_{u_{p}u_{w},\ell+1}^{(v_{a})}}\square H$, denoted by $T_{1}',T_{2}',\cdots,T_{h}'$. 
        \begin{SubProofCase}
            $(u_{i_{t}},v_{a})\notin \{(u_{i_{t}},v_{1}),(u_{i_{t}},v_{2})\}$. As in Subcase 1.1, for each $t\in [\ell]$, let $S_{t}=\{(u_{i_{t}},v_{a}),(u_{i_{t}},v_{1}),(u_{i_{t}},v_{2})\}$ and $T_{1}^{(u_{i_{t}})}$ be a pendant $(S_{t},(u_{i_{t}},v_{a}))$-tree in $H(u_{i_{t}})$, which corresponds to $T_{1}^{(u_{p})}$ as denoted in Subcase 1.1 of Proposition \ref{pro3.1} for $H(u_{p})$. %(this tree always exists since $\tau_{3}(H(u_{i}))=h\geq \ell$.) 
            We further define a tree $T_{t}^{*}$ as follows:
            \begin{itemize}
                \item $V(T_{t}^{*})=V(\overrightarrow{P_{u_{p}u_{w},t}^{(v_{a})}[u_{p},u_{i_{t}}]})\cup V(T_{1}^{(u_{i_{t}})})\cup \{x,y\}$;
                \item $A(T_{t}^{*})=A(\overrightarrow{P_{u_{p}u_{w},t}^{(v_{a})}[u_{p},u_{i_{t}}]})\cup A(T_{1}^{(u_{i_{t}})})\cup \{(u_{i_{t}},v_{1})x,(u_{i_{t}},v_{2})y\}$.
            \end{itemize}
            It can be verified that these are $\ell$ pairwise internally-disjoint pendant $(S,r)$-trees.
        \end{SubProofCase}

        \begin{SubProofCase}
            $(u_{i_{t}},v_{a})=(u_{i_{t}},v_{1})$. The argument for the case where $(u_{i_{t}},v_{a})=(u_{i_{t}},v_{2})$ is similar, and we thus omit the details. %By the same reason as in Subcase 1.2, for each $t\in[\ell]$, there exists a directed $v_{a}-v_{2}$ path $\overrightarrow{Q_{v_{a}v_{2},1}^{(u_{i_{t}})}}$ in $H(u_{i_{t}})$. 
            Analogous to Subcase 1.2, we obtain $\overrightarrow{Q_{v_{1}v_{2},1}^{(u_{i_{1}})}},\overrightarrow{Q_{v_{1}v_{2},1}^{(u_{i_{2}})}},\cdots,\overrightarrow{Q_{v_{1}v_{2},1}^{(u_{i_{\ell}})}}$. For each $t\in[\ell]$, we define a tree $T_{t}^{*}$ as follows:
            \begin{itemize}
                \item $V(T_{t}^{*})=V(\overrightarrow{P_{u_{p}u_{w},t}^{(v_{a})}})\cup V(\overrightarrow{Q_{v_{1}v_{2},1}^{(u_{i_{t}})}})\cup \{y\}$;
                \item $A(T_{t}^{*})=A(\overrightarrow{P_{u_{p}u_{w},t}^{(v_{a})}})\cup A(\overrightarrow{Q_{v_{1}v_{2},1}^{(u_{i_{t}})}})\cup \{(u_{i_{t}},v_{2})y\}$.
            \end{itemize}
            %For each $t\in[\ell]$, let $\overrightarrow{Q_{v_{a}v_{2}}^{(u_{i_{t}})}}$ be a directed $(v_{a}-v_{2})$ path in $H(u_{i_{t}})$, which is corresponding to $\overrightarrow{Q_{rx}^{(u_{i_{t}})}}$ in $H(u_{i_{t}})$.
            %Since $\kappa(H(u_{i_{t}}))\geq \ell+1$, we can obtain that there always exist one directed paths from $(u_{q_{i}},v_{a})$ to $(u_{q_{i}},v_{2})$, respectively, denoted by $\overrightarrow{P}_{1},\overrightarrow{P}_{2},\cdots,\overrightarrow{P}_{\ell+1}$. Note that the length of the paths $\overrightarrow{P}_{1},\overrightarrow{P}_{2},\cdots,\overrightarrow{P}_{\ell}$ is at least two, and we denote the in-neighbor of $(u_{q_{i}},v_{2})$ is $(u_{q_{i}},v_{f_{i}})$. Since $H(u_{q_{i}})$ is an $(\ell+1)$-strong digraph, it follows from Lemma \ref{lem2.3} that there exist $\ell+1$ directed paths from $(u_{w},v_{f_{1}}),(u_{w},v_{f_{2}}),\cdots,(u_{w},v_{f_{\ell}}),(u_{w},v_{a})$ to $(u_{w},v_{1})$, respectively, denoted by $\overrightarrow{R}_{1},\overrightarrow{R}_{2},\cdots,\overrightarrow{R}_{\ell}$. For each $i\in [\ell]$, let $T_{i}^{*}$ be a pendant $(S,r)$-tree with vertex set $V(T_{i}^{*})=V(\overrightarrow{P}_{i})\cup V(\overrightarrow{P}_{pw}^{(i)})$ and arc set $A(T_{i}^{*})=A(\overrightarrow{P}_{i})\cup A(\overrightarrow{P}_{pw}^{(i)})$. 
            It can be verified that these are $\ell$ pairwise internally-disjoint pendant $(S,r)$-trees.
        \end{SubProofCase}
    \end{ProofCase}
    One can verify that $T_{1}',T_{2}',\cdots,T_{h}'$ and $T_{1}^{*},T_{2}^{*},\cdots,T_{\ell}^{*}$ form $\ell+h$ pairwise internally-disjoint pendant $(S,r)$-trees in $D\square H$. %Hence, we obtain
    %\begin{align*}
        %\tau_{S,r}(D\square H)\geq h+\ell\geq 3\lfloor\frac{\ell}{2}\rfloor,
    %\end{align*}
    %where we use the earlier assumption that $h\geq \ell\geq 1$ for the last inequality. 
    This Lemma is thus proved.
\end{proof}

\begin{lemma}\label{lem3.3}
    If all vertices of $S$ lie in distinct $H({u_{i}})$~($i\in [n]$), then there exist $\ell+h$ pairwise internally-disjoint pendant $(S,r)$-trees in $D\square H$.
\end{lemma}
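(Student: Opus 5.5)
Write $r=(u_{p},v_{a})$, $x=(u_{q},v_{b})$ and $y=(u_{w},v_{c})$ with $u_{p},u_{q},u_{w}$ pairwise distinct, so that $S_{D}=\{u_{p},u_{q},u_{w}\}$ has three elements and $\tau_{S_{D},u_{p}}(D)\geq \ell$. The plan is to build $\ell$ trees from the $D$-structure and $h$ trees from the $H$-structure, and to keep the two families in essentially disjoint regions of $D\square H$, as in Lemma \ref{lem3.2}.

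\textbf{The $\ell$ trees from $D$.} Take $\ell$ pairwise internally-disjoint pendant $(S_{D},u_{p})$-trees $\widetilde{T}_{1},\dots,\widetilde{T}_{\ell}$ in $D$, with branch vertices $\alpha_{1},\dots,\alpha_{\ell}$. For each $t\in[\ell]$:
\begin{itemize}
\item route $r$ along $\widetilde{T}_{t}^{(v_{a})}[u_{p},\alpha_{t}]$ inside the layer $D(v_{a})$;
\item at $(\alpha_{t},v_{a})$, switch into the $H$-layer $H(\alpha_{t})$ and use a pendant tree, or directed paths, in $H(\alpha_{t})$ from $(\alpha_{t},v_{a})$ to $(\alpha_{t},v_{b})$ and $(\alpha_{t},v_{c})$;
\item finish with $\widetilde{T}_{t}^{(v_{b})}[\alpha_{t},u_{q}]$ in $D(v_{b})$ and $\widetilde{T}_{t}^{(v_{c})}[\alpha_{t},u_{w}]$ in $D(v_{c})$.
\end{itemize}
Since the $\widetilde{T}_{t}$ are internally disjoint in $D$, the vertices $\alpha_{t}$ are distinct. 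Hence the $H(\alpha_{t})$ parts use distinct $H$-layers, and the $D$-layer parts are disjoint copies of disjoint subtrees. When $v_{a},v_{b},v_{c}$ are not distinct, use the same subcase split as in Proposition \ref{pro3.1} and Lemma \ref{lem3.2}:
\begin{itemize}
\item if $v_{b}=v_{c}\neq v_{a}$, a single $(\alpha_{t},v_{a})$--$(\alpha_{t},v_{b})$ path in $H(\alpha_{t})$ suffices, and the branching happens in $D(v_{b})$;
\item if $v_{a}=v_{b}$ or $v_{a}=v_{c}$, one branch stays in $D(v_{a})$ and only the other needs an $H$-path;
\item if all three coincide, the trees $\widetilde{T}_{t}^{(v_{a})}$ themselves are used.
\end{itemize}
A degenerate case occurs when $\alpha_{t}=u_{p}$ (so $\widetilde{T}_{t}[u_{p},\alpha_{t}]$ is trivial) or when a branch vertex is adjacent to a terminal. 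These are handled as in Subcases 1.2 and 1.3 of Proposition \ref{pro3.1}, using $\kappa(H)\geq h+1$ (Lemma \ref{lem2.1}) and fans (Lemma \ref{lem2.3}) to reroute.

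\textbf{The $h$ trees from $H$.} By Lemma \ref{lem2.1}, $\kappa(D)\geq\ell+1$, so there is a $u_{p}$--$u_{q}$ path or a $u_{p}$--$u_{w}$ path in $D$ not used by the $\widetilde{T}_{t}$. For instance, if the arc $u_{p}u_{q}$ exists, we can take a direct arc that avoids all $\widetilde{T}_{t}$, since the root has out-degree one in each of them. More robustly, pick a pendant $(S_{D},u_{p})$-tree $\widetilde{T}_{0}$ in $D$ that is internally disjoint from $\widetilde{T}_{1},\dots,\widetilde{T}_{\ell}$. If $\tau_{S_{D},u_{p}}(D)\geq\ell+1$ this is immediate. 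Otherwise, use the $(\ell+1)$-connectivity and the degree bound $\delta^{0}\geq\ell+2$ (Lemma \ref{lem2.2}) to find a $u_{p}$--$u_{q}$ path $P$ and a $u_{p}$--$u_{w}$ path $P'$ whose interiors avoid the interiors of the $\widetilde{T}_{t}$, but which may overlap each other.

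Then mimic Proposition \ref{pro3.1} in $(P\cup P')\square H$. Take $h$ internally-disjoint pendant $(S_{H},v_{a})$-trees $\hat{T}_{1},\dots,\hat{T}_{h}$ in the layer $H(u_{p})$, with branch vertices $\beta_{s}$ (or in-neighbours of the relevant terminal). From $(u_{p},\beta_{s})$, follow $P^{(\beta_{s})}$ to $H(u_{q})$ and $P'^{(\beta_{s})}$ to $H(u_{w})$. Finish in $H(u_{q})$ along $\hat{T}_{s}^{(u_{q})}[\beta_{s},v_{b}]$ and in $H(u_{w})$ along $\hat{T}_{s}^{(u_{w})}[\beta_{s},v_{c}]$. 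Because the $\beta_{s}$ are distinct, the $D$-layer copies $P^{(\beta_{s})}$ lie in distinct layers, so these $h$ trees are mutually internally disjoint.

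\textbf{Main obstacle.} The hardest step is guaranteeing disjointness \emph{between} the two families. Two collisions must be ruled out:
\begin{itemize}
\item an $H(\alpha_{t})$ segment meeting the copy $P^{(\beta_{s})}$ at the vertex $(\alpha_{t},\beta_{s})$;
\item the $\hat{T}_{s}^{(u_{q})}$ parts meeting $\widetilde{T}_{t}^{(v_{b})}$ at $(u_{q},\cdot)$-vertices.
\end{itemize}
The first is avoided by requiring $P,P'$ to miss every $\alpha_{t}$, and the second by the pendant structure, since $x$ has in-degree one in each tree. I expect this to force a careful case analysis on whether $u_{q}$ or $u_{w}$ is adjacent from $u_{p}$ and on coincidences among $v_{a},v_{b},v_{c}$. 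In the bad cases I would fall back on fan arguments (Lemma \ref{lem2.3}) in $H(u_{q})$ and $H(u_{w})$ to reroute terminal entries, as in Subcase 1.3 of Lemma \ref{lem3.2}.
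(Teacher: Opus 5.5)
Your overall architecture works and differs from the paper's, but the step that builds the connector for the $h$ trees from $H$ fails as written.

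\textbf{The gap.} The lemma must cover sets with $\tau_{S_D,u_p}(D)=\ell$, and then an extra tree $\widetilde{T}_0$ internally disjoint from $\widetilde{T}_1,\dots,\widetilde{T}_\ell$ does not exist. Your fallback asks for a $u_p$--$u_q$ path $P$ and a $u_p$--$u_w$ path $P'$ whose interiors avoid the interiors of all $\widetilde{T}_t$. This does not follow from $\kappa(D)\ge \ell+1$ or $\delta^0(D)\ge \ell+2$. Nothing prevents the interiors of $\widetilde{T}_1,\dots,\widetilde{T}_\ell$ from covering $V(D)\setminus S_D$. In that situation such a $P$ exists only if $u_pu_q\in A(D)$ or $u_pu_wu_q$ is a path of $D$. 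For the same reason, your claim that some $u_p$--$u_q$ path is ``not used by the $\widetilde{T}_t$'' is unsupported.

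\textbf{The repair.} What you actually need is the weaker condition in your last paragraph, and it is easy to guarantee. Apart from $r,x,y$, a tree $T_t^*$ of your $D$-family meets the region used by the $H$-family only in vertices $(\alpha_t,\beta_s)$ of its $H(\alpha_t)$-part. Its other pieces lie in $D(v_a)$, $D(v_b)$, $D(v_c)$, and $\beta_s\notin\{v_a,v_b,v_c\}$. It also meets $H(u_p)$, $H(u_q)$, $H(u_w)$ only in $r,x,y$. So the connector only has to avoid $\{\alpha_1,\dots,\alpha_\ell\}$; other interior vertices of the $\widetilde{T}_t$ are harmless, because their copies in $D(\beta_s)$ are not used by any $T_t^*$.

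No $\alpha_t$ lies in $S_D$. In particular $\alpha_t\neq u_p$, since $u_p$ has out-degree one in $\widetilde{T}_t$, so your ``degenerate case'' $\alpha_t=u_p$ never occurs. Since $\kappa(D)\ge \ell+1$, the digraph $D-\{\alpha_1,\dots,\alpha_\ell\}$ is strong. It therefore contains an out-tree $F$ rooted at $u_p$ through $u_q$ and $u_w$: take a $u_p$--$u_q$ path together with the final segment of a $u_p$--$u_w$ path after its last vertex on the first, so the union really is a tree.

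With $F$ in place of $P\cup P'$, your construction goes through in each case:
\begin{itemize}
\item When $v_a,v_b,v_c$ are distinct, it works verbatim, ending along $\hat{T}_s^{(u_q)}[\beta_s,v_b]$ and $\hat{T}_s^{(u_w)}[\beta_s,v_c]$.
\item When $v_b=v_a$ (or $v_c=v_a$), that ending does not exist. Replace it by an $h$-fan in $H(u_q)$ (or $H(u_w)$) from the vertices $(u_q,\beta_s)$ to $x$; this exists because $\kappa(H)\ge h+1$.
\item When $v_a=v_b=v_c$, the connector need not avoid anything.
\end{itemize}

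\textbf{Comparison with the paper.} The paper never builds a separate connector; it reuses $\widetilde{T}_1$. Each $T_s'$ runs through the copy $\widetilde{T}_1^{(v_{j_s})}$ and then finishes along an $(h+1)$-fan in $H(u_q)$ and $H(u_w)$. The $\ell$-th $D$-tree is $\widetilde{T}_1^{(v_a)}$, completed by the spare fan paths $\overrightarrow{R_{h+1}^{(u_{q})}}$ and $\overrightarrow{R_{h+1}^{(u_{w})}}$ starting at $(u_q,v_a)$ and $(u_w,v_a)$. In Case 2 the spare path $\overrightarrow{Q_{v_{a}v_{c},h+1}}$, which avoids every $v_{j_s}$, plays this role.

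So the paper spends the surplus connectivity $\kappa(H)\ge h+1$ and needs no path-finding in $D$ beyond the given $\widetilde{T}_t$, which is what Algorithm 1 relies on. Your repaired route spends $\kappa(D)\ge \ell+1$ instead. It treats all $\ell$ trees $\widetilde{T}_t$ uniformly and needs no fans when $v_a,v_b,v_c$ are distinct. The price is one reachability computation in $D-\{\alpha_1,\dots,\alpha_\ell\}$.
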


\begin{proof}
    Without loss of generality, let $r=(u_{p},v_{a})$, $x=(u_{q},v_{b})$ and $y=(u_{w},v_{c})$, where $u_{p}$, $u_{q}$ and $u_{w}$ are distinct vertices in $D$. We thus define $S_{D}=\{u_{p},u_{q},u_{w}\}$. Since $\tau_{3}(D)=\ell$, there exist $\ell$ pairwise internally-disjoint pendant $(S_{D},u_{p})$-trees in $D$, denoted by $\widetilde{T}_{1},\widetilde{T}_{2},\cdots,\widetilde{T}_{\ell}$. For each $t\in [\ell]$, let $u_{i_{t}}$ be the branch vertex of $\widetilde{T}_{t}$.
    %$\overrightarrow{P_{u_{p}u_{q},t}}$ and $\overrightarrow{P_{u_{p}u_{w},t}}$ be the unique directed $u_{p}-u_{q}$ path and directed $u_{p}-u_{w}$ path in $\widetilde{T}_{t}$, respectively. We further denote by $u_{i_{t}}$ the {\em branch vertex} of $\widetilde{T}_{t}$, i.e., the unique vertex satisfying $V(\overrightarrow{P_{u_{p}u_{q},t}[u_{i_{t}},u_{q}]})\cap V(\overrightarrow{P_{u_{p}u_{w},t}[u_{i_{t}},u_{w}]})=\{u_{i_{t}}\}$. %where $\overrightarrow{P_{u_{i_{t}}u_{q}}}\subseteq \overrightarrow{P_{u_{p}u_{q},t}}$ and $\overrightarrow{P_{u_{i_{t}}u_{w}}}\subseteq \overrightarrow{P_{u_{p}u_{w},t}}$. 
    The proof is divided into the following three cases.
    \begin{ProofCase}
        $v_{a},v_{b}$ and $v_{c}$ are three distinct vertices in $H$. Let $S_{H}=\{v_{a},v_{b},v_{c}\}$. Since $\tau_{3}(H)=h$, there exist $h$ pairwise internally-disjoint pendant $(S_{H},v_{a})$-trees in $H$, denoted by $\hat{T}_{1},\hat{T}_{2},\cdots,\hat{T}_{h}$. For each $s\in [h]$, let $v_{j_{s}}$ be the branch vertex of $\hat{T}_{s}$.
        Since $H(u_{q})$ is an $(h+1)$-strong digraph, there exists an $(h+1)$-fan consisting of $h+1$ pairwise internally-disjoint directed paths $\overrightarrow{R_{s}^{(u_{q})}}$ for $s\in [h+1]$, from $(u_{q},v_{j_{1}}),(u_{q},v_{j_{2}}),\cdots,(u_{q},v_{j_{h}}),(u_{q},v_{a})$ to $x$ in $H(u_{q})$, where $\overrightarrow{R_{s}^{(u_{q})}}$ is the directed $(u_{q},v_{j_{s}})-x$ path for $s\in [h]$, and $\overrightarrow{R_{h+1}^{(u_{q})}}$ is the directed $(u_{q},v_{a})-x$ path. Similarly, $H(u_{w})$ contains $h+1$ pairwise internally-disjoint directed paths $\overrightarrow{R_{s}^{(u_{w})}}$ for $s\in [h+1]$, from $(u_{w},v_{j_{1}}),(u_{w},v_{j_{2}}),\cdots,(u_{w},v_{j_{h}}),(u_{w},v_{a})$ to $y$, where $\overrightarrow{R_{s}^{(u_{w})}}$ is the directed $(u_{w},v_{j_{s}})-y$ path for $s\in [h]$, and $\overrightarrow{R_{h+1}^{(u_{w})}}$ is the directed $(u_{w},v_{a})-y$ path.

        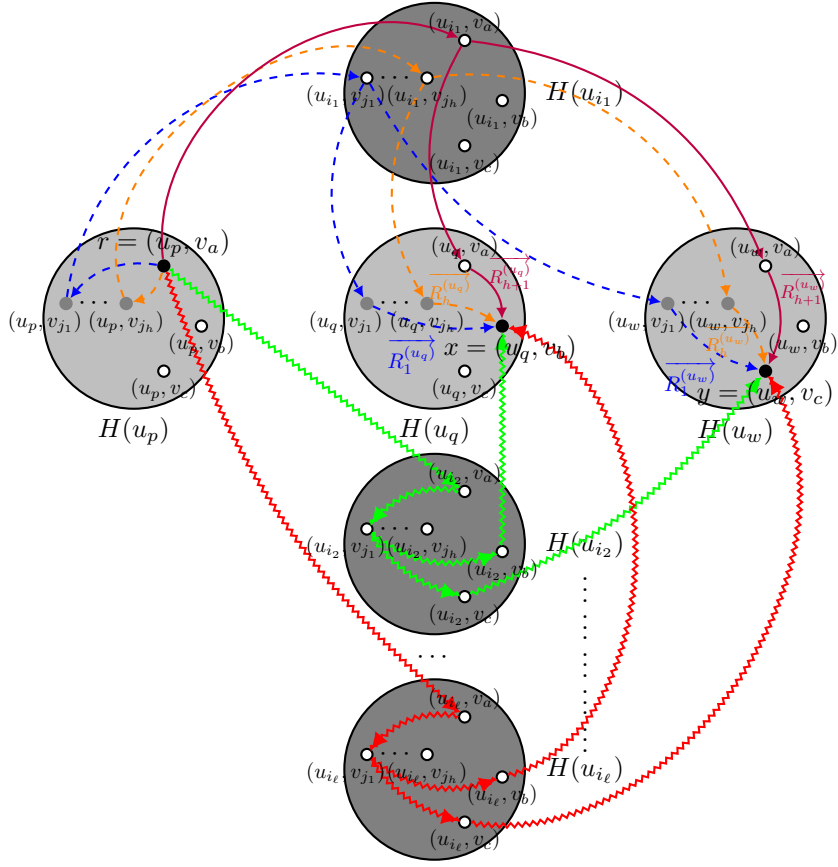
\begin{figure}[htb] 
            \centering
            \begin{tikzpicture}
                % $H(u_{p})$层
                \draw[thick, fill=lightgray] (-4,0) circle (1.2);
                \node[solidvertex] (a11) at (-3.6,0.7) {};
                \node[vertex, gray] (a12) at (-4.9,0.2) {};
                \node at (-4.5,0.2) {$\cdots$};
                \node[vertex, gray] (a13) at (-4.1,0.2) {};
                \node[vertex] (a14) at (-3.1,-0.1) {};
                \node[vertex] (a15) at (-3.6,-0.7) {};
                \node at (-4,-1.5) {$H(u_{p})$};
                % $H(u_{q})$层
                \draw[thick, fill=lightgray] (0,0) circle (1.2);
                \node[vertex] (a21) at (0.4,0.7) {};
                \node[vertex, gray] (a22) at (-0.9,0.2) {};
                \node at (-0.5,0.2) {$\cdots$};
                \node[vertex, gray] (a23) at (-0.1,0.2) {};
                \node[solidvertex] (a24) at (0.9,-0.1) {};
                \node[vertex] (a25) at (0.4,-0.7) {};
                \node at (0,-1.5) {$H(u_{q})$};
                % $H(u_{w})$层
                \draw[thick, fill=lightgray] (4,0) circle (1.2);
                \node[vertex] (a31) at (4.4,0.7) {};
                \node[vertex, gray] (a32) at (3.1,0.2) {};
                \node at (3.5,0.2) {$\cdots$};
                \node[vertex, gray] (a33) at (3.9,0.2) {};
                \node[vertex] (a34) at (4.9,-0.1) {};
                \node[solidvertex] (a35) at (4.4,-0.7) {};
                \node at (4,-1.5) {$H(u_{w})$};
                % $H(u_{i_{1}})$层
                \draw[thick, fill=gray] (0,3) circle (1.2);
                \node[vertex] (a41) at (0.4,3.7) {};
                \node[vertex] (a42) at (-0.9,3.2) {};
                \node at (-0.5,3.2) {$\cdots$};
                \node[vertex] (a43) at (-0.1,3.2) {};
                \node[vertex] (a44) at (0.9,2.9) {};
                \node[vertex] (a45) at (0.4,2.3) {};
                \node at (2,3) {$H(u_{i_{1}})$};
                % $H(u_{i_{2}})$层
                \draw[thick, fill=gray] (0,-3) circle (1.2);
                \node[vertex] (a51) at (0.4,-2.3) {};
                \node[vertex] (a52) at (-0.9,-2.8) {};
                \node at (-0.5,-2.8) {$\cdots$};
                \node[vertex] (a53) at (-0.1,-2.8) {};
                \node[vertex] (a54) at (0.9,-3.1) {};
                \node[vertex] (a55) at (0.4,-3.7) {};
                \node at (0,-4.5) {$\cdots$};
                \node at (2,-3) {$H(u_{i_{2}})$};
                \node at (2,-3.5) {$\vdots$};
                \node at (2,-4) {$\vdots$};
                \node at (2,-4.5) {$\vdots$};
                \node at (2,-5) {$\vdots$};
                \node at (2,-5.5) {$\vdots$};
                % $H(u_{i_{\ell+1}})$层
                \draw[thick, fill=gray] (0,-6) circle (1.2);
                \node[vertex] (a61) at (0.4,-5.3) {};
                \node[vertex] (a62) at (-0.9,-5.8) {};
                \node at (-0.5,-5.8) {$\cdots$};
                \node[vertex] (a63) at (-0.1,-5.8) {};
                \node[vertex] (a64) at (0.9,-6.1) {};
                \node[vertex] (a65) at (0.4,-6.7) {};
                \node at (2,-6) {$H(u_{i_{\ell}})$};
                % 绘制悬挂树$T_{1}$
                \draw[base arc, dashed, bend right=30, blue] (a11) to (a12);
                \draw[base arc, dashed, bend left=45, blue] (a12) to (a42);
                \draw[base arc, dashed, bend right=30, blue] (a42) to (a22);
                \draw[base arc, dashed, bend right=25, blue] (a42) to (a32);
                \draw[base arc, dashed, bend right=15, blue] (a22) to (a24);
                \node at (-0.3, -0.5) [scale=0.8, blue] {$\overrightarrow{R_{1}^{(u_{q})}}$};
                \draw[base arc, dashed, bend right=20, blue] (a32) to (a35);
                \node at (3.4, -0.8) [scale=0.8, blue] {$\overrightarrow{R_{1}^{(u_{w})}}$};
                % 绘制悬挂树$T_{h}$
                \draw[base arc, dashed, bend left=30, orange] (a11) to (a13);
                \draw[base arc, dashed, bend left=75, orange] (a13) to (a43);
                \draw[base arc, dashed, bend right=30, orange] (a43) to (a23);
                \draw[base arc, dashed, bend left=45, orange] (a43) to (a33);
                \draw[base arc, dashed, bend left=15, orange] (a23) to (a24);
                \node at (0.2, 0.4) [scale=0.7, orange] {$\overrightarrow{R_{h}^{(u_{q})}}$};
                \draw[base arc, dashed, bend left=20, orange] (a33) to (a35);
                \node at (3.9, -0.3) [scale=0.7, orange] {$\overrightarrow{R_{h}^{(u_{w})}}$};
                % 绘制悬挂树$T_{1}^{*}$
                \draw[zigzag, green] (a11) to (a51);
                \draw[zigzag, bend right=30, green] (a51) to (a52);
                \draw[zigzag, bend right=30, green] (a52) to (a54);
                \draw[zigzag, bend right=20, green] (a52) to (a55);
                \draw[zigzag, green] (a54) to (a24);
                \draw[zigzag, bend right=15, green] (a55) to (a35);
                % 绘制悬挂树$T_{\ell-1}^{*}$
                \draw[zigzag, bend right=15, red] (a11) to (a61);
                \draw[zigzag, bend right=30, red] (a61) to (a62);
                \draw[zigzag, bend right=30, red] (a62) to (a64);
                \draw[zigzag, bend right=20, red] (a62) to (a65);
                \draw[zigzag, bend right=75, red] (a64) to (a24);
                \draw[zigzag, bend right=65, red] (a65) to (a35);
                % 绘制悬挂树$T_{\ell}^{*}$
                \draw[base arc, bend left=60, purple] (a11) to (a41);
                \draw[base arc, bend right=30, purple] (a41) to (a21);
                \draw[base arc, bend left=30, purple] (a41) to (a31);
                \draw[base arc, bend left=30, purple] (a21) to (a24);
                \node at (1, 0.6) [scale=0.7, purple] {$\overrightarrow{R_{h+1}^{(u_{q})}}$};
                \draw[base arc, bend left=25, purple] (a31) to (a35);
                \node at (4.9, 0.4) [scale=0.7, purple] {$\overrightarrow{R_{h+1}^{(u_{w})}}$};
                % 标记顶点
                \labelNode{above}{a11}{r=(u_{p},v_{a})}
                \labelNode{below, scale=0.8, xshift=-10pt}{a12}{(u_{p},v_{j_{1}})}
                \labelNode{below, scale=0.8}{a13}{(u_{p},v_{j_{h}})}
                \labelNode{below, scale=0.8}{a14}{(u_{p},v_{b})}
                \labelNode{below, scale=0.8}{a15}{(u_{p},v_{c})}
                \labelNode{above, scale=0.8}{a21}{(u_{q},v_{a})}
                \labelNode{below, scale=0.8, xshift=-10pt}{a22}{(u_{q},v_{j_{1}})}
                \labelNode{below, scale=0.8}{a23}{(u_{q},v_{j_{h}})}
                \labelNode{below, xshift=3pt}{a24}{x=(u_{q},v_{b})}
                \labelNode{below, scale=0.8}{a25}{(u_{q},v_{c})}
                \labelNode{above, scale=0.8}{a31}{(u_{w},v_{a})}
                \labelNode{below, scale=0.8, xshift=-10pt}{a32}{(u_{w},v_{j_{1}})}
                \labelNode{below, scale=0.8}{a33}{(u_{w},v_{j_{h}})}
                \labelNode{below, scale=0.8}{a34}{(u_{w},v_{b})}
                \labelNode{below}{a35}{y=(u_{w},v_{c})}
                \labelNode{above, scale=0.8}{a41}{(u_{i_{1}},v_{a})}
                \labelNode{below, scale=0.8, xshift=-10pt}{a42}{(u_{i_{1}},v_{j_{1}})}
                \labelNode{below, scale=0.8}{a43}{(u_{i_{1}},v_{j_{h}})}
                \labelNode{below,, scale=0.8}{a44}{(u_{i_{1}},v_{b})}
                \labelNode{below,, scale=0.8}{a45}{(u_{i_{1}},v_{c})}
                \labelNode{above, scale=0.8}{a51}{(u_{i_{2}},v_{a})}
                \labelNode{below, scale=0.8, xshift=-10pt}{a52}{(u_{i_{2}},v_{j_{1}})}
                \labelNode{below, scale=0.8}{a53}{(u_{i_{2}},v_{j_{h}})}
                \labelNode{below,, scale=0.8}{a54}{(u_{i_{2}},v_{b})}
                \labelNode{below,, scale=0.8}{a55}{(u_{i_{2}},v_{c})}
                \labelNode{above, scale=0.8}{a61}{(u_{i_{\ell}},v_{a})}
                \labelNode{below, scale=0.8, xshift=-10pt}{a62}{(u_{i_{\ell}},v_{j_{1}})}
                \labelNode{below, scale=0.8}{a63}{(u_{i_{\ell}},v_{j_{h}})}
                \labelNode{below,, scale=0.8}{a64}{(u_{i_{\ell}},v_{b})}
                \labelNode{below,, scale=0.8}{a65}{(u_{i_{\ell}},v_{c})}
                % 图的注释
                %\node at (-4.5, -3) {Annotation:};
                %\node at (-4.5, -3.5) [blue] {Blue dashed arcs: $T_{1}$};
                %\node at (-4.5, -4) [orange] {Orange dashed arcs: $T_{h}$};
                %\node at (-4.5, -4.5) [green] {Green zigzag arcs: $T_{1}^{*}$};
                %\node at (-4.5, -5) [red] {Red zigzag arcs: $T_{\ell-1}^{*}$};
                %\node at (-4.5, -5.5) [purple] {Purple arcs: $T_{\ell}^{*}$};
            \end{tikzpicture}
            \caption{$\ell+h$ pendant $(S,r)$-trees for Case 1 of Lemma \ref{lem3.3}.}
            \label{fig2}
        \end{figure}
        For each $s\in [h]$, let $T_{s}'$ be a pendant $(S,r)$-tree induced by the arcs in
        \begin{align*}
            %A(\overrightarrow{Q_{v_{a}v_{b},s}^{(u_{p})}[v_{a},v_{j_{s}}]})
            A(\hat{T}_{s}^{(u_{p})}[v_{a},v_{j_{s}}])\cup A(\widetilde{T}_{1}^{(v_{j_{s}})})\cup A(\overrightarrow{R_{s}^{(u_{q})}})\cup A(\overrightarrow{R_{s}^{(u_{w})}}),
        \end{align*}
        where, to illustrate the pattern, $T_{1}'$ and $T_{h}'$ are depicted by the blue and orange dashed lines in Figure \ref{fig2}. For each $t\in [\ell-1]$, let $T_{t}^{*}$ be a pendant $(S,r)$-tree induced by the arcs in
        \begin{align*}
            %A(\overrightarrow{P_{u_{p}u_{q},t+1}^{(v_{a})}[u_{p},u_{i_{t+1}}]})
            A(\widetilde{T}_{t+1}^{(v_{a})}[u_{p},u_{i_{t+1}}])\cup A(\hat{T}_{1}^{(u_{i_{t+1}})})\cup A(\widetilde{T}_{t+1}^{(v_{b})}[u_{i_{t+1}},u_{q}])\cup A(\widetilde{T}_{t+1}^{(v_{c})}[u_{i_{t+1}},u_{w}]),
            %A(\overrightarrow{P_{u_{p}u_{q},t+1}^{(v_{b})}[u_{i_{t+1}},u_{q}]})\cup A(\overrightarrow{P_{u_{p}u_{w},t+1}^{(v_{c})}[u_{i_{t+1}},u_{w}]}).
        \end{align*}
        where, similarly, $T_{1}^{*}$ and $T_{\ell-1}^{*}$ are shown by the green and red zigzag lines in Figure \ref{fig2}. Finally, let $T_{\ell}^{*}$ be a pendant $(S,r)$-tree induced by the arcs in
        \begin{align*}
            A(\widetilde{T}_{1}^{(v_{a})})\cup A(\overrightarrow{R_{h+1}^{(u_{q})}})\cup A(\overrightarrow{R_{h+1}^{(u_{w})}}),
        \end{align*}
        where $T_{\ell}^{*}$ is represented by the purple lines in Figure \ref{fig2}.
        %$T_{1}',T_{h}',T_{1}^{*},T_{\ell-1}^{*}$ and $T_{\ell}^{*}$ are shown in Figure \ref{fig2}, where blue dashed lines represent $T_{1}$, orange dashed lines represent $T_{h}$, green zigzag lines represent $T_{1}^{*}$, red zigzag lines represent $T_{\ell-1}^{*}$, and purple lines represent $T_{\ell}^{*}$. 
        
        Note that all lines in the figure represent directed paths. It can be verified that %the three pendant $(S,r)$-trees $T^{1,*},T^{2,*},T^{3,*}$ are 
        $T_{1}',T_{2}',\cdots,T_{h}'$ and $T_{1}^{*},T_{2}^{*},\cdots,T_{\ell}^{*}$ together form $\ell+h$ pairwise internally-disjoint pendant $(S,r)$-trees in $D\square H$.
        %and contain no arcs from $W$. In general, for any $T_{i},T_{s}~(1\leq i\neq s\leq \ell)$ and any $\hat{T}_{j},\hat{T}_{t}~(1\leq j\neq t\leq h)$, $(T_{i}\cup T_{s})\square (\hat{T}_{j}\cup \hat{T}_{t})$ contains three pairwise internally-disjoint pendant $(S,r)$-trees and no arcs from $W$. Note that $(\bigcup_{i=1}^{\ell}T_{i})\square (\bigcup_{j=1}^{h}\hat{T}_{j})$ is a subdigraph of $D\square H$. Since $\ell\leq h$, there exist $3\lfloor\frac{\ell}{2}\rfloor$ pairwise internally-disjoint pendant $(S,r)$-trees in $(\bigcup_{i=1}^{\ell}T_{i})\square (\bigcup_{j=1}^{h}\hat{T}_{j})$, and hence in $D\square H$.
    \end{ProofCase}

    \begin{ProofCase}
        Exactly two of $v_{a},v_{b}$ and $v_{c}$ coincide in $H$.
        \begin{SubProofCase}
            $v_{a}=v_{b}$. The argument for the case that $v_{a}=v_{c}$ is similar, and so we omit the details. Let $S_{H}=\{v_{a},v_{c}\}$. By Lemma \ref{lem2.1} and Theorem \ref{thm2.1}, there exist $h+1$ pairwise internally-disjoint directed paths from $v_{a}$ to $v_{c}$ in $H$, denoted by $\overrightarrow{Q_{v_{a}v_{c},1}},\overrightarrow{Q_{v_{a}v_{c},2}},\cdots,\overrightarrow{Q_{v_{a}v_{c},h+1}}$. Among these directed paths, at most one consists of only a single arc and we denote this directed path as $\overrightarrow{Q_{v_{a}v_{c},h+1}}=v_{a}v_{c}$. All the remaining directed paths contain internal vertices, where $v_{j_{s}}$ denotes the in-neighbor of $v_{c}$ in $\overrightarrow{Q_{v_{a}v_{c},s}}$. By Lemma \ref{lem2.3}, $H(u_{q})$ contains an $(h+1)$-fan consisting of $h+1$ pairwise internally-disjoint directed paths, denoted by $\overrightarrow{R_{s}^{(u_{q})}}$ for $s\in [h+1]$, from $(u_{q},v_{j_{1}}),(u_{q},v_{j_{2}}),\cdots,(u_{q},v_{j_{h}}),(u_{q},v_{c})$ to $(u_{q},v_{b})$, where $\overrightarrow{R_{s}^{(u_{q})}}$ is the directed $(u_{q},v_{j_{1}})-(u_{q},v_{b})$ path for $s\in[h]$, and $\overrightarrow{R_{h+1}^{(u_{q})}}$ is the directed $(u_{q},v_{c})-(u_{q},v_{b})$ path. 
            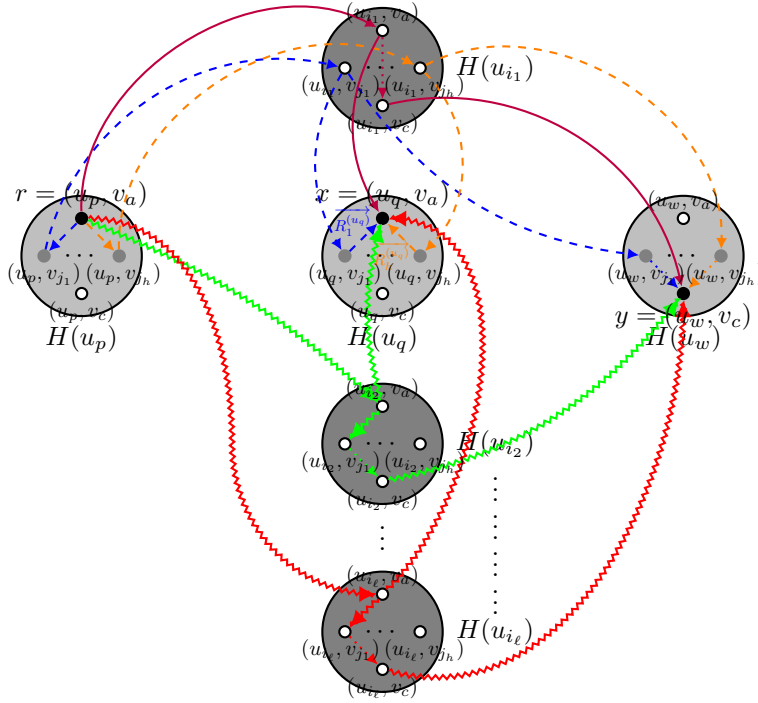
\begin{figure}[htb] 
                \centering
                \begin{tikzpicture}
                    % $H(u_{p})$层
                    \draw[thick, fill=lightgray] (-4,0) circle (0.8);
                    \node[solidvertex] (a11) at (-4,0.5) {};
                    \node[vertex, gray] (a12) at (-4.5,0) {};
                    \node at (-4,0) {$\cdots$};
                    \node[vertex, gray] (a13) at (-3.5,0) {};
                    \node[vertex] (a15) at (-4,-0.5) {};
                    \node at (-4,-1.1) {$H(u_{p})$};
                    % $H(u_{q})$层
                    \draw[thick, fill=lightgray] (0,0) circle (0.8);
                    \node[solidvertex] (a21) at (0,0.5) {};
                    \node[vertex, gray] (a22) at (-0.5,0) {};
                    \node at (0,0) {$\cdots$};
                    \node[vertex, gray] (a23) at (0.5,0) {};
                    \node[vertex] (a25) at (0,-0.5) {};
                    \node at (0,-1.1) {$H(u_{q})$};
                    % $H(u_{w})$层
                    \draw[thick, fill=lightgray] (4,0) circle (0.8);
                    \node[vertex] (a31) at (4,0.5) {};
                    \node[vertex, gray] (a32) at (3.5,0) {};
                    \node at (4,0) {$\cdots$};
                    \node[vertex, gray] (a33) at (4.5,0) {};
                    \node[solidvertex] (a35) at (4,-0.5) {};
                    \node at (4,-1.1) {$H(u_{w})$};
                    % $H(u_{i_{1}})$层
                    \draw[thick, fill=gray] (0,2.5) circle (0.8);
                    \node[vertex] (a41) at (0,3) {};
                    \node[vertex] (a42) at (-0.5,2.5) {};
                    \node at (0,2.5) {$\cdots$};
                    \node[vertex] (a43) at (0.5,2.5) {};
                    \node[vertex] (a45) at (0,2) {};
                    \node at (1.5,2.5) {$H(u_{i_{1}})$};
                    % $H(u_{i_{2}})$层
                    \draw[thick, fill=gray] (0,-2.5) circle (0.8);
                    \node[vertex] (a51) at (0,-2) {};
                    \node[vertex] (a52) at (-0.5,-2.5) {};
                    \node at (0,-2.5) {$\cdots$};
                    \node[vertex] (a53) at (0.5,-2.5) {};
                    \node[vertex] (a55) at (0,-3) {};
                    \node at (0,-3.65) {$\vdots$};
                    \node at (1.5,-2.5) {$H(u_{i_{2}})$};
                    \node at (1.5,-3) {$\vdots$};
                    \node at (1.5,-3.5) {$\vdots$};
                    \node at (1.5,-4) {$\vdots$};
                    \node at (1.5,-4.5) {$\vdots$};
                    % $H(u_{i_{\ell}})$层
                    \draw[thick, fill=gray] (0,-5) circle (0.8);
                    \node[vertex] (a61) at (0,-4.5) {};
                    \node[vertex] (a62) at (-0.5,-5) {};
                    \node at (0,-5) {$\cdots$};
                    \node[vertex] (a63) at (0.5,-5) {};
                    \node[vertex] (a65) at (0,-5.5) {};
                    \node at (1.5,-5) {$H(u_{i_{\ell}})$};
                    % 绘制悬挂树$T_{1}$
                    \draw[base arc, dashed, blue] (a11) to (a12);
                    \draw[base arc, dashed, bend left=45, blue] (a12) to (a42);
                    \draw[base arc, dashed, bend right=30, blue] (a42) to (a22);
                    \draw[base arc, dashed, bend right=25, blue] (a42) to (a32);
                    \draw[base arc, dashed, blue] (a22) to (a21);
                    \node at (-0.4, 0.45) [scale=0.6, blue] {$\overrightarrow{R_{1}^{(u_{q})}}$};
                    \draw[base arc, dotted, blue] (a32) to (a35);
                    % 绘制悬挂树$T_{h}$
                    \draw[base arc, dashed, orange] (a11) to (a13);
                    \draw[base arc, dashed, bend left=60, orange] (a13) to (a43);
                    \draw[base arc, dashed, bend left=45, orange] (a43) to (a23);
                    \draw[base arc, dashed, bend left=60, orange] (a43) to (a33);
                    \draw[base arc, dashed, orange] (a23) to (a21);
                    \node at (0.12, 0) [scale=0.6, orange] {$\overrightarrow{R_{h}^{(u_{q})}}$};
                    \draw[base arc, dotted, orange] (a33) to (a35);
                    % 绘制悬挂树$T_{1}^{*}$
                    \draw[zigzag, bend left=10, green] (a11) to (a51);
                    \draw[zigzag, bend left=15, green] (a51) to (a21);
                    \draw[zigzag, green] (a51) to (a52);
                    \draw[base arc, dotted, green] (a52) to (a55);
                    \draw[zigzag, bend right=20, green] (a55) to (a35);
                    % 绘制悬挂树$T_{\ell-1}^{*}$
                    \draw[zigzag, red] (a11) to[out=0,in=90] (-2,-2.5) to[out=270,in=180] (a61);
                    \draw[zigzag, bend left=105, red] (a61) to[out=-45,in=270]  (a21);
                    \draw[zigzag, red] (a61) to (a62);
                    \draw[base arc, dotted, red] (a62) to (a65);
                    \draw[zigzag, red] (a65) to[out=-15,in=270] (a35);
                    % 绘制悬挂树$T_{\ell}^{*}$
                    \draw[base arc, bend left=60, purple] (a11) to (a41);
                    \draw[base arc, bend right=30, purple] (a41) to (a21);
                    \draw[base arc, dotted, purple] (a41) to (a45);
                    \draw[base arc, bend left=45, purple] (a45) to (a35);
                    % 标记顶点
                    \labelNode{above}{a11}{r=(u_{p},v_{a})}
                    \labelNode{below, scale=0.8}{a12}{(u_{p},v_{j_{1}})}
                    \labelNode{below, scale=0.8, xshift=2pt}{a13}{(u_{p},v_{j_{h}})}
                    \labelNode{below, scale=0.8}{a15}{(u_{p},v_{c})}
                    \labelNode{above}{a21}{x=(u_{q},v_{a})}
                    \labelNode{below, scale=0.8}{a22}{(u_{q},v_{j_{1}})}
                    \labelNode{below, scale=0.8, xshift=2pt}{a23}{(u_{q},v_{j_{h}})}
                    \labelNode{below, scale=0.8}{a25}{(u_{q},v_{c})}
                    \labelNode{above, scale=0.8}{a31}{(u_{w},v_{a})}
                    \labelNode{below, scale=0.8}{a32}{(u_{w},v_{j_{1}})}
                    \labelNode{below, scale=0.8, xshift=2pt}{a33}{(u_{w},v_{j_{h}})}
                    \labelNode{below}{a35}{y=(u_{w},v_{c})}
                    \labelNode{above, scale=0.8}{a41}{(u_{i_{1}},v_{a})}
                    \labelNode{below, scale=0.8, xshift=-2pt}{a42}{(u_{i_{1}},v_{j_{1}})}
                    \labelNode{below, scale=0.8, xshift=2pt}{a43}{(u_{i_{1}},v_{j_{h}})}
                    \labelNode{below,, scale=0.8}{a45}{(u_{i_{1}},v_{c})}
                    \labelNode{above, scale=0.8}{a51}{(u_{i_{2}},v_{a})}
                    \labelNode{below, scale=0.8, xshift=-2pt}{a52}{(u_{i_{2}},v_{j_{1}})}
                    \labelNode{below, scale=0.8, xshift=2pt}{a53}{(u_{i_{2}},v_{j_{h}})}
                    \labelNode{below,, scale=0.8}{a55}{(u_{i_{2}},v_{c})}
                    \labelNode{above, scale=0.8}{a61}{(u_{i_{\ell}},v_{a})}
                    \labelNode{below, scale=0.8, xshift=-2pt}{a62}{(u_{i_{\ell}},v_{j_{1}})}
                    \labelNode{below, scale=0.8, xshift=2pt}{a63}{(u_{i_{\ell}},v_{j_{h}})}
                    \labelNode{below,, scale=0.8}{a65}{(u_{i_{\ell}},v_{c})}
                \end{tikzpicture}
                \caption{$\ell+h$ pendant $(S,r)$-trees for Subcase 2.1 of Lemma \ref{lem3.3}.}
                \label{fig3}
            \end{figure}
            
            For each $s\in [h]$, Let $T_{s}'$ be a pendant $(S,r)$ tree induced by the arcs in
            \begin{align*}
                A(\overrightarrow{Q_{v_{a}v_{c},s}^{(u_{p})}}[v_{a},v_{j_{s}}])\cup A(\widetilde{T}_{1}^{(v_{j_{s}})})\cup A(\overrightarrow{R_{s}^{(u_{q})}})\cup \{(u_{w},v_{j_{s}})y\}.
            \end{align*}
            %\begin{itemize}
                %\item $V(T_{s})=V(\overrightarrow{Q_{v_{a}v_{c},s}^{(u_{p})}[v_{a},v_{j_{s}}]})\cup V(\widetilde{T}_{1}^{(v_{j_{s}})})\cup V(\overrightarrow{R_{s}^{(u_{q})}})\cup \{y\}$;
                %\item $A(T_{s})=A(\overrightarrow{Q_{v_{a}v_{j_{s}},s}^{(u_{p})}}[v_{a},v_{j_{s}}])\cup A(\widetilde{T}_{1}^{(v_{j_{s}})})\cup A(\overrightarrow{R_{s}^{(u_{q})}})\cup \{(u_{w},v_{j_{s}})y\}$.
            %\end{itemize}
            For each $t\in[\ell-1]$, let $T_{t}^{*}$ be a pendant $(S,r)$ tree induced by the arcs in %we define a tree $T_{t}^{*}$ such that
            \begin{align*}
                A(\widetilde{T}_{t+1}^{(v_{a})}[u_{p},u_{q}])\cup A(\overrightarrow{Q_{v_{a}v_{c},1}^{(u_{i_{t+1}})}})\cup A(\widetilde{T}_{t+1}^{(v_{c})}[u_{i_{t+1}},u_{w}]).
            \end{align*}
            %\begin{itemize}
                %\item $V(T_{t}^{*})=V(\overrightarrow{P_{u_{p}u_{q},t+1}^{(v_{a})}})\cup V(\overrightarrow{Q_{v_{a}v_{c},1}^{(u_{i_{t+1}})}})\cup V(\overrightarrow{P_{u_{p}u_{w},t+1}^{(v_{c})}[u_{i_{t+1}},u_{w}]})$;
                %\item $A(T_{t}^{*})=A(\overrightarrow{P_{u_{p}u_{q},t+1}^{(v_{a})}})\cup A(\overrightarrow{Q_{v_{a}v_{c},1}^{(u_{i_{t+1}})}})\cup A(\overrightarrow{P_{u_{p}u_{w},t+1}^{(v_{c})}[u_{i_{t+1}},u_{w}]})$.
            %\end{itemize}
            At last, let $T_{\ell}^{*}$ be a pendant $(S,r)$-tree induced by the arcs in
            \begin{align*}
                A(\widetilde{T}_{1}^{(v_{a})}[u_{p},u_{q}])\cup A(\overrightarrow{Q_{v_{a}v_{c},h+1}^{(u_{i_{1}})}})\cup A(\widetilde{T}_{1}^{(v_{c})}[u_{i_{t+1}},u_{w}]).
            \end{align*}
            %As illustrate in Figure \ref{fig3}, we depict the representative trees $T_{1}',T_{h}',T_{1}^{*},T_{\ell-1}^{*}$ and $T_{\ell}^{*}$, where blue dashed lines represent $T_{1}'$, orange dashed lines represent $T_{h}'$, green zigzag lines represent $T_{1}^{*}$, red zigzag lines represent $T_{\ell-1}^{*}$, and purple lines represent $T_{\ell}^{*}$. Note that dotted lines denote arcs, while all other lines represent directed paths. One can verify that $T_{1}',T_{2}',\cdots,T_{h}'$ and $T_{1}^{*},T_{2}^{*},\cdots,T_{\ell-1}^{*},T_{\ell}^{*}$ together form $h+\ell$ pairwise internally-disjoint pendant $(S,r)$-trees in $D\square H$.
        \end{SubProofCase}

        \begin{SubProofCase}
            $v_{b}=v_{c}$. By the same reason as in Subcase 2.1, we obtain $\overrightarrow{Q_{v_{a}v_{c},1}^{(u_{p})}},\overrightarrow{Q_{v_{a}v_{c},2}^{(u_{p})}},\cdots,\overrightarrow{Q_{v_{a}v_{c},h+1}^{(u_{p})}}$. Let $\overrightarrow{Q_{v_{a}v_{c},h+1}^{(u_{p})}}=(u_{p},v_{a})(u_{p},v_{c})$ and $(u_{p},v_{j_{s}})$ be the in-neighbor of $(u_{p},v_{c})$ in $\overrightarrow{Q_{v_{a}v_{c},s}^{(u_{p})}}$ for $s\in [h]$. 
            \begin{figure}[htb] 
                \centering
                \begin{tikzpicture}
                    % $H(u_{p})$层
                    \draw[thick, fill=lightgray] (-4,0) circle (0.8);
                    \node[solidvertex] (a11) at (-4,0.5) {};
                    \node[vertex, gray] (a12) at (-4.5,0) {};
                    \node at (-4,0) {$\cdots$};
                    \node[vertex, gray] (a13) at (-3.5,0) {};
                    \node[vertex] (a15) at (-4,-0.5) {};
                    \node at (-4,-1.1) {$H(u_{p})$};
                    % $H(u_{q})$层
                    \draw[thick, fill=lightgray] (0,0) circle (0.8);
                    \node[vertex] (a21) at (0,0.5) {};
                    \node[vertex, gray] (a22) at (-0.5,0) {};
                    \node at (0,0) {$\cdots$};
                    \node[vertex, gray] (a23) at (0.5,0) {};
                    \node[solidvertex] (a25) at (0,-0.5) {};
                    \node at (0,-1.1) {$H(u_{q})$};
                    % $H(u_{w})$层
                    \draw[thick, fill=lightgray] (4,0) circle (0.8);
                    \node[vertex] (a31) at (4,0.5) {};
                    \node[vertex, gray] (a32) at (3.5,0) {};
                    \node at (4,0) {$\cdots$};
                    \node[vertex, gray] (a33) at (4.5,0) {};
                    \node[solidvertex] (a35) at (4,-0.5) {};
                    \node at (4,-1.1) {$H(u_{w})$};
                    % $H(u_{i_{1}})$层
                    \draw[thick, fill=gray] (0,2.5) circle (0.8);
                    \node[vertex] (a41) at (0,3) {};
                    \node[vertex] (a42) at (-0.5,2.5) {};
                    \node at (0,2.5) {$\cdots$};
                    \node[vertex] (a43) at (0.5,2.5) {};
                    \node[vertex] (a45) at (0,2) {};
                    \node at (1.5,2.5) {$H(u_{i_{1}})$};
                    % $H(u_{i_{2}})$层
                    \draw[thick, fill=gray] (0,-2.5) circle (0.8);
                    \node[vertex] (a51) at (0,-2) {};
                    \node[vertex] (a52) at (-0.5,-2.5) {};
                    \node at (0,-2.5) {$\cdots$};
                    \node[vertex] (a53) at (0.5,-2.5) {};
                    \node[vertex] (a55) at (0,-3) {};
                    \node at (0,-3.65) {$\vdots$};
                    \node at (1.5,-2.5) {$H(u_{i_{2}})$};
                    \node at (1.5,-3) {$\vdots$};
                    \node at (1.5,-3.5) {$\vdots$};
                    \node at (1.5,-4) {$\vdots$};
                    \node at (1.5,-4.5) {$\vdots$};
                    % $H(u_{i_{\ell}})$层
                    \draw[thick, fill=gray] (0,-5) circle (0.8);
                    \node[vertex] (a61) at (0,-4.5) {};
                    \node[vertex] (a62) at (-0.5,-5) {};
                    \node at (0,-5) {$\cdots$};
                    \node[vertex] (a63) at (0.5,-5) {};
                    \node[vertex] (a65) at (0,-5.5) {};
                    \node at (1.5,-5) {$H(u_{i_{\ell}})$};
                    % 绘制悬挂树$T_{1}$
                    \draw[base arc, dashed, blue] (a11) to (a12);
                    \draw[base arc, dashed, bend left=45, blue] (a12) to (a42);
                    \draw[base arc, dashed, bend right=30, blue] (a42) to (a22);
                    \draw[base arc, dashed, bend right=30, blue] (a42) to (a32);
                    \draw[base arc, dotted, blue] (a22) to (a25);
                    %\node at (-0.4, 0.45) [scale=0.6, blue] {$\overrightarrow{R_{1}^{(u_{q})}}$};
                    \draw[base arc, dotted, blue] (a32) to (a35);
                    % 绘制悬挂树$T_{h}$
                    \draw[base arc, dashed, orange] (a11) to (a13);
                    \draw[base arc, dashed, bend left=60, orange] (a13) to (a43);
                    \draw[base arc, dashed, bend left=45, orange] (a43) to (a23);
                    \draw[base arc, dashed, bend left=60, orange] (a43) to (a33);
                    \draw[base arc, dotted, orange] (a23) to (a25);
                    %\node at (0.12, 0) [scale=0.6, orange] {$\overrightarrow{R_{h}^{(u_{q})}}$};
                    \draw[base arc, dotted, orange] (a33) to (a35);
                    % 绘制悬挂树$T_{1}^{*}$
                    \draw[zigzag, bend left=10, green] (a11) to (a51);
                    \draw[zigzag, green] (a51) to (a52);
                    \draw[base arc, dotted, green] (a52) to (a55);
                    \draw[zigzag, bend right=30, green] (a55) to (a25);
                    \draw[zigzag, bend right=20, green] (a55) to (a35);
                    % 绘制悬挂树$T_{\ell-1}^{*}$
                    \draw[zigzag, red] (a11) to[out=0,in=90] (-2,-2.5) to[out=270,in=180] (a61);
                    \draw[zigzag, red] (a61) to (a62);
                    \draw[base arc, dotted, red] (a62) to (a65);
                    \draw[zigzag, red] (a65) to[out=210,in=180] (a25);
                    \draw[zigzag, red] (a65) to[out=-15,in=270] (a35);
                    % 绘制悬挂树$T_{\ell}^{*}$
                    \draw[base arc, bend left=60, purple] (a11) to (a41);
                    \draw[base arc, dotted, purple] (a41) to (a45);
                    \draw[base arc, bend left=90, purple] (a45) to (a25);
                    \draw[base arc, bend left=45, purple] (a45) to (a35);
                    % 标记顶点
                    \labelNode{above}{a11}{r=(u_{p},v_{a})}
                    \labelNode{below, scale=0.8}{a12}{(u_{p},v_{j_{1}})}
                    \labelNode{below, scale=0.8, xshift=2pt}{a13}{(u_{p},v_{j_{h}})}
                    \labelNode{below, scale=0.8}{a15}{(u_{p},v_{c})}
                    \labelNode{above, scale=0.8}{a21}{(u_{q},v_{a})}
                    \labelNode{below, scale=0.8}{a22}{(u_{q},v_{j_{1}})}
                    \labelNode{below, scale=0.8, xshift=2pt}{a23}{(u_{q},v_{j_{h}})}
                    \labelNode{below}{a25}{x=(u_{q},v_{c})}
                    \labelNode{above, scale=0.8}{a31}{(u_{w},v_{a})}
                    \labelNode{below, scale=0.8}{a32}{(u_{w},v_{j_{1}})}
                    \labelNode{below, scale=0.8, xshift=2pt}{a33}{(u_{w},v_{j_{h}})}
                    \labelNode{below}{a35}{y=(u_{w},v_{c})}
                    \labelNode{above, scale=0.8}{a41}{(u_{i_{1}},v_{a})}
                    \labelNode{below, scale=0.8, xshift=-2pt}{a42}{(u_{i_{1}},v_{j_{1}})}
                    \labelNode{below, scale=0.8, xshift=2pt}{a43}{(u_{i_{1}},v_{j_{h}})}
                    \labelNode{below,, scale=0.8}{a45}{(u_{i_{1}},v_{c})}
                    \labelNode{above, scale=0.8}{a51}{(u_{i_{2}},v_{a})}
                    \labelNode{below, scale=0.8, xshift=-2pt}{a52}{(u_{i_{2}},v_{j_{1}})}
                    \labelNode{below, scale=0.8, xshift=2pt}{a53}{(u_{i_{2}},v_{j_{h}})}
                    \labelNode{below,, scale=0.8}{a55}{(u_{i_{2}},v_{c})}
                    \labelNode{above, scale=0.8}{a61}{(u_{i_{\ell}},v_{a})}
                    \labelNode{below, scale=0.8, xshift=-2pt}{a62}{(u_{i_{\ell}},v_{j_{1}})}
                    \labelNode{below, scale=0.8, xshift=2pt}{a63}{(u_{i_{\ell}},v_{j_{h}})}
                    \labelNode{below,, scale=0.8}{a65}{(u_{i_{\ell}},v_{c})}
                \end{tikzpicture}
                \caption{$\ell+h$ pendant $(S,r)$-trees for Subcase 2.2 of Lemma \ref{lem3.3}.}
                \label{fig4}
            \end{figure}
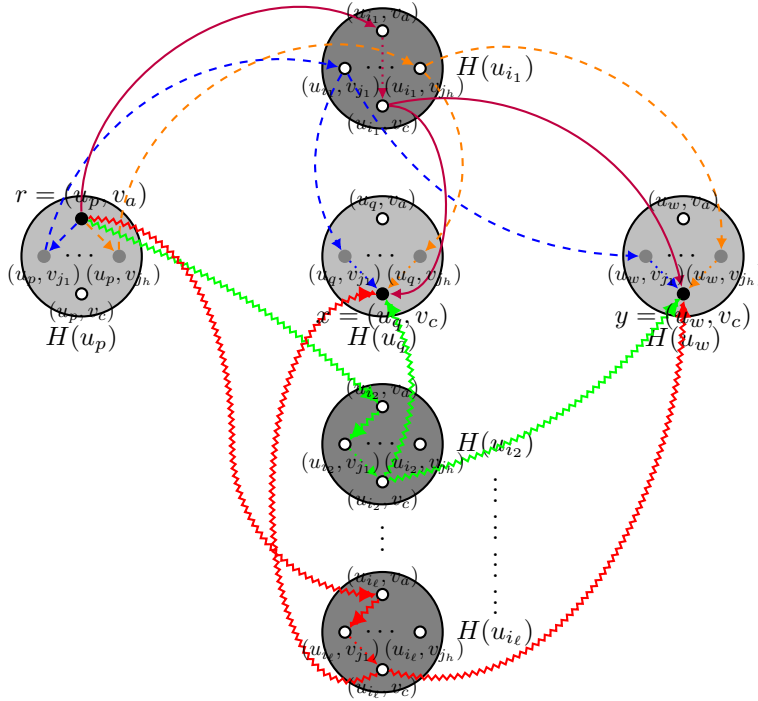
            
            For each $s\in[h]$, let $T_{s}'$ be a pendant $(S,r)$-tree induced by the arcs in%we define a tree $T_{s}'$ such that
            \begin{align*}
                A(\overrightarrow{Q_{v_{a}v_{c},s}^{(u_{p})}[v_{a},v_{j_{s}}]})\cup A(\widetilde{T}_{1}^{(v_{j_{s}})})\cup \{(u_{q},v_{j_{s}})x,(u_{w},v_{j_{s}})y\}.
            \end{align*}
            %\begin{itemize}
                %\item $V(T_{s}')=V(\overrightarrow{Q_{v_{a}v_{c},s}^{(u_{p})}[v_{a},v_{j_{s}}]})\cup V(\widetilde{T}_{1}^{(v_{j_{s}})})\cup \{x,y\}$;
                %\item $A(T_{s}')=A(\overrightarrow{Q_{v_{a}v_{c},s}^{(u_{p})}[v_{a},v_{j_{s}}]})\cup A(\widetilde{T}_{1}^{(v_{j_{s}})})\cup \{(u_{q},v_{j_{s}})x,(u_{w},v_{j_{s}})y\}$.
            %\end{itemize}
            For each $t\in [\ell-1]$, let $T_{t}^{*}$ be a pendant $(S,r)$-tree induced by the arcs in%we define a tree $T_{t}'^{*}$ such that
            \begin{align*}
                A(\widetilde{T}_{t+1}^{(v_{a})}[u_{p},u_{i_{t+1}}])\cup A(\overrightarrow{Q_{v_{a}v_{c},1}^{(u_{i_{t+1}})}})\cup A(\widetilde{T}_{t+1}^{(v_{b})}[u_{i_{t+1}},u_{q}])\cup A(\widetilde{T}_{t+1}^{(v_{c})}[u_{i_{t+1}},u_{w}]).
            \end{align*}
            %\begin{itemize}
                %\item $V(T_{t}'^{*})=V(\overrightarrow{P_{u_{p}u_{q},t+1}^{(v_{a})}[u_{p},u_{i_{t+1}}]})\cup V(\overrightarrow{Q_{v_{a}v_{c},1}^{(u_{i_{t+1}})}})\cup V(\overrightarrow{P_{u_{p}u_{q},t+1}^{(v_{c})}[u_{i_{t+1}},u_{q}]})\cup V(\overrightarrow{P_{u_{p}u_{w},t+1}^{(v_{c})}[u_{i_{t+1}},u_{w}]})$;
                %\item $A(T_{t}'^{*})=A(\overrightarrow{P_{u_{p}u_{q},t+1}^{(v_{a})}[u_{p},u_{i_{t+1}}]})\cup A(\overrightarrow{Q_{v_{a}v_{c},1}^{(u_{i_{t+1}})}})\cup A(\overrightarrow{P_{u_{p}u_{q},t+1}^{(v_{c})}[u_{i_{t+1}},u_{q}]})\cup A(\overrightarrow{P_{u_{p}u_{w},t+1}^{(v_{c})}[u_{i_{t+1}},u_{w}]})$.
            %\end{itemize}
            Finally, let $T_{\ell}^{*}$ be a pendant $(S,r)$-tree induced by the arcs in
            \begin{align*}
                A(\widetilde{T}_{1}^{(v_{a})}[u_{p},u_{i_{1}}])\cup A(\overrightarrow{Q_{v_{a}v_{c},h+1}^{(u_{i_{1}})}})\cup A(\widetilde{T}_{1}^{(v_{b})}[u_{i_{1}},u_{q}]) \cup A(\widetilde{T}_{1}^{(v_{c})}[u_{i_{1}},u_{w}]).
            \end{align*}
            
            For clarity, the representative trees $T_{1}',T_{h}',T_{1}^{*},T_{\ell-1}^{*}$ and $T_{\ell}^{*}$ are depicted in Figures~\ref{fig3} and~\ref{fig4}, where blue dashed lines represent $T_{1}'$, orange dashed lines represent $T_{h}'$, green zigzag lines represent $T_{1}^{*}$, red zigzag lines represent $T_{\ell-1}^{*}$, and purple lines represent $T_{\ell}^{*}$. Note that dotted lines denote arcs, while all other lines represent directed paths. It can be verified that across both subcases, the trees $T_{1}^{*},T_{2}^{*},\cdots,T_{\ell}^{*}$ and $T_{1}',T_{2}',\cdots,T_{h}'$ together form $\ell+h$ pairwise internally-disjoint pendant $(S,r)$-trees in $D\square H$.
        \end{SubProofCase}
        %Hence, we have
            %\begin{align*}
                %\tau_{S,r}(D\square H)\geq h+\ell\geq 3\lfloor\frac{\ell}{2}\rfloor,
            %\end{align*}
            %where we use the earlier assumption that $h\geq \ell\geq 1$ for the last inequality.
    \end{ProofCase}
    
    \begin{ProofCase}
        $v_{a},v_{b}$ and $v_{c}$ are the same vertex in $H$. By Lemma \ref{lem2.2}, $r$ has at least $\ell+2$ out-neighbors in $H(u_{p})$, denoted by $(u_{p},v_{j_{1}}),(u_{p},v_{j_{2}}),\cdots,(u_{p},v_{j_{h+2}})$. In the sequel, we only use $\ell+1$ of these out-neighbors. By the same reason as in Subcase 2.1, we obtain $\overrightarrow{R_{s}^{(u_{q})}}$ and $\overrightarrow{R_{s}^{(u_{w})}}$ for $s\in[h+1]$, where $\overrightarrow{R_{s}^{(u_{q})}}$ is the directed $(u_{q},v_{j_{s}})-x$ path in $H(u_{q})$ and $\overrightarrow{R_{s}^{(u_{w})}}$ is the directed $(u_{q},v_{j_{s}})-y$ path in $H(u_{w})$. 
        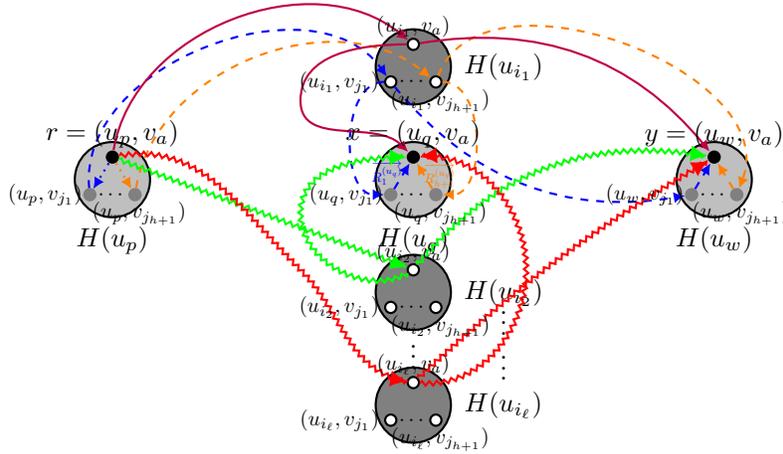
\begin{figure}[H] 
            \centering
            \begin{tikzpicture}
                % $H(u_{p})$层
                \draw[thick, fill=lightgray] (-4,0) circle (0.5);
                \node[solidvertex] (a11) at (-4,0.3) {};
                \node[vertex, gray] (a12) at (-4.3,-0.2) {};
                \node[scale=0.8] at (-4,-0.2) {$\cdots$};
                \node[vertex, gray] (a13) at (-3.7,-0.2) {};
                \node at (-4,-0.8) {$H(u_{p})$};
                % $H(u_{q})$层
                \draw[thick, fill=lightgray] (0,0) circle (0.5);
                \node[solidvertex] (a21) at (0,0.3) {};
                \node[vertex, gray] (a22) at (-0.3,-0.2) {};
                \node[scale=0.8] at (0,-0.2) {$\cdots$};
                \node[vertex, gray] (a23) at (0.3,-0.2) {};
                \node at (0,-0.8) {$H(u_{q})$};
                % $H(u_{w})$层
                \draw[thick, fill=lightgray] (4,0) circle (0.5);
                \node[solidvertex] (a31) at (4,0.3) {};
                \node[vertex, gray] (a32) at (3.7,-0.2) {};
                \node[scale=0.8] at (4,-0.2) {$\cdots$};
                \node[vertex, gray] (a33) at (4.3,-0.2) {};
                \node at (4,-0.8) {$H(u_{w})$};
                % $H(u_{i_{1}})$层
                \draw[thick, fill=gray] (0,1.5) circle (0.5);
                \node[vertex] (a41) at (0,1.8) {};
                \node[vertex] (a42) at (-0.3,1.3) {};
                \node[scale=0.8] at (0,1.3) {$\cdots$};
                \node[vertex] (a43) at (0.3,1.3) {};
                \node at (1.2,1.5) {$H(u_{i_{1}})$};
                % $H(u_{i_{2}})$层
                \draw[thick, fill=gray] (0,-1.5) circle (0.5);
                \node[vertex] (a51) at (0,-1.2) {};
                \node[vertex] (a52) at (-0.3,-1.7) {};
                \node[scale=0.8] at (0,-1.7) {$\cdots$};
                \node[vertex] (a53) at (0.3,-1.7) {};
                \node at (0,-2.25) {$\vdots$};
                \node at (1.2,-1.5) {$H(u_{i_{2}})$};
                \node at (1.2,-1.8) {$\vdots$};
                \node at (1.2,-2.4) {$\vdots$};
                % $H(u_{i_{\ell}})$层
                \draw[thick, fill=gray] (0,-3) circle (0.5);
                \node[vertex] (a61) at (0,-2.7) {};
                \node[vertex] (a62) at (-0.3,-3.2) {};
                \node[scale=0.8] at (0,-3.2) {$\cdots$};
                \node[vertex] (a63) at (0.3,-3.2) {};
                \node at (1.2,-3) {$H(u_{i_{\ell}})$};
                % 绘制悬挂树$T_{1}$
                \draw[base arc, dotted, blue] (a11) to (a12);
                \draw[base arc, dashed, bend left=75, blue] (a12) to (a42);
                \draw[base arc, dashed, bend right=90, blue] (a42) to (a22);
                \draw[base arc, dashed, bend right=30, blue] (a42) to (a32);
                \draw[base arc, dashed, blue] (a22) to (a21);
                \node at (-0.35, 0.08) [scale=0.5, blue] {$\overrightarrow{R_{1}^{(u_{q})}}$};
                \draw[base arc, dashed, blue] (a32) to (a31);
                % 绘制悬挂树$T_{h}$
                \draw[base arc, dotted, orange] (a11) to (a13);
                \draw[base arc, dashed, bend left=60, orange] (a13) to (a43);
                \draw[base arc, dashed, bend left=90, orange] (a43) to (a23);
                \draw[base arc, dashed, bend left=90, orange] (a43) to (a33);
                \draw[base arc, dashed, orange] (a23) to (a21);
                \node at (0.35, 0.05) [scale=0.5, orange] {$\bf{\overrightarrow{R_{h+1}^{(u_{q})}}}$};
                \draw[base arc, dashed, orange] (a33) to (a31);
                % 绘制悬挂树$T_{\ell}^{*}$
                \draw[base arc, bend left=60, purple] (a11) to (a41);
                \draw[base arc, purple] (a41) to[out=180,in=90] (-1.5,1.05) to[out=270,in=135] (a21);
                \draw[base arc, bend left=30, purple] (a41) to (a31);
                % 绘制悬挂树$T_{2}^{*}$
                \draw[zigzag, green] (a11) to (a51);
                \draw[zigzag, green] (a51) to[out=210,in=270] (-1.5,-0.45) to[out=90,in=180] (a21);
                \draw[zigzag, bend left=30, green] (a51) to (a31);
                % 绘制悬挂树$T_{\ell}^{*}$
                \draw[zigzag, red] (a11) to[out=10,in=135] (-1.5,-1.2) to[out=315,in=170] (a61);
                \draw[zigzag, red] (a61) to[out=-5,in=270] (1.5,-1.2) to[out=90,in=5] (a21);
                \draw[zigzag, bend left=6, red] (a61) to (a31);
                % 标记顶点
                \labelNode{above}{a11}{r=(u_{p},v_{a})}
                \labelNode{left, scale=0.8}{a12}{(u_{p},v_{j_{1}})}
                \labelNode{below, scale=0.8, xshift=2pt}{a13}{(u_{p},v_{j_{h+1}})}
                \labelNode{above}{a21}{x=(u_{q},v_{a})}
                \labelNode{left, scale=0.8}{a22}{(u_{q},v_{j_{1}})}
                \labelNode{below, scale=0.8, xshift=2pt}{a23}{(u_{q},v_{j_{h+1}})}
                \labelNode{above}{a31}{y=(u_{w},v_{a})}
                \labelNode{left, scale=0.8}{a32}{(u_{w},v_{j_{1}})}
                \labelNode{below, scale=0.8, xshift=2pt}{a33}{(u_{w},v_{j_{h+1}})}
                \labelNode{above, scale=0.8}{a41}{(u_{i_{1}},v_{a})}
                \labelNode{left, scale=0.8, xshift=-2pt}{a42}{(u_{i_{1}},v_{j_{1}})}
                \labelNode{below, scale=0.8, xshift=2pt}{a43}{(u_{i_{1}},v_{j_{h+1}})}
                \labelNode{above, scale=0.8}{a51}{(u_{i_{2}},v_{a})}
                \labelNode{left, scale=0.8, xshift=-2pt}{a52}{(u_{i_{2}},v_{j_{1}})}
                \labelNode{below, scale=0.8, xshift=2pt}{a53}{(u_{i_{2}},v_{j_{h+1}})}
                \labelNode{above, scale=0.8}{a61}{(u_{i_{\ell}},v_{a})}
                \labelNode{left, scale=0.8, xshift=-2pt}{a62}{(u_{i_{\ell}},v_{j_{1}})}
                \labelNode{below, scale=0.8, xshift=2pt}{a63}{(u_{i_{\ell}},v_{j_{h+1}})}
            \end{tikzpicture}
            \caption{$\ell+h+1$ pendant $(S,r)$-trees for Case 3 of Lemma \ref{lem3.3}.}
            \label{fig5}
        \end{figure}
        For each $s\in[h]$, let $T_{s}'$ be a pendant $(S,r)$-tree induced by the arcs in
        \begin{align*}
            \{r(u_{p},v_{j_{s}})\}\cup V(\widetilde{T}_{1}^{(v_{j_{s}})})\cup A(\overrightarrow{R_{s}^{(u_{q})}})\cup A(\overrightarrow{R_{s}^{(u_{w})}}).
        \end{align*}
        As a representative example, $T_{1}'$ and $T_{h}'$ are depicted by the blue and orange dashed lines in Figure \ref{fig5}.
        %\begin{itemize}
            %\item $V(T_{s})=\{r\}\cup V(\widetilde{T}_{1}^{(v_{j_{s}})})\cup V(\overrightarrow{R_{s}^{(u_{q})}})\cup V(\overrightarrow{R_{s}^{(u_{w})}})$;
            %\item $A(T_{s})=\{r(u_{p},v_{j_{s}})\}\cup V(\widetilde{T}_{1}^{(v_{j_{s}})})\cup A(\overrightarrow{R_{s}^{(u_{q})}})\cup A(\overrightarrow{R_{s}^{(u_{w})}})$.
        %\end{itemize}
        
        Note that the trees $\widetilde{T}_{1}^{(v_{a})},\widetilde{T}_{2}^{(v_{a})},\cdots,\widetilde{T}_{\ell}^{(v_{a})}$ are also pendant $(S,r)$-trees. For visual clarity, only the first, second, and last of these trees are shown in Figure \ref{fig5}, represented by the purple lines, green zigzag lines, and red zigzag lines, respectively.
        Together with $T_{1}',T_{2}',\cdots,T_{h+1}'$, these trees form $\ell+h+1$ pairwise internally-disjoint pendant $(S,r)$-trees in $D\square H$. For reference, dotted lines denote arcs, while all other lines represent directed paths.%Figure \ref{fig5} illustrates the pendant trees $T_{1},T_{h},T_{1}^{(v_{a})},T_{\ell-1}^{(v_{a})}$ and $T_{\ell}^{(v_{a})}$, where blue dashed lines represent $T_{1}$, orange dashed lines represent $T_{h}$, purple lines represent $\widetilde{T}_{1}^{(v_{a})}$, green zigzag lines represent $\widetilde{T}_{2}^{(v_{a})}$, and red zigzag lines represent $\widetilde{T}_{\ell}^{(v_{a})}$. 
        %Note that dotted lines represent arcs, and all other lines represent directed paths.
        %Hence, we have
        %\begin{align*}
            %\tau_{S,r}(D\square H)\geq h+\ell+1\geq 3\lfloor\frac{\ell}{2}\rfloor,
        %\end{align*}
        %where we use the earlier assumption that $h\geq \ell\geq 1$ for the last inequality.
    \end{ProofCase}
    Based on the above arguments, this lemma is thus proved.
\end{proof}

\specialproof{thm1.1}
    By Lemmas \ref{lem3.1}, \ref{lem3.2} and \ref{lem3.3}, for any terminal vertex set $S=\{r,x,y\}\subseteq V(D\square H)$, we have
    \begin{align*}
        \tau_{3}(D\square H)=\min_{S\subseteq V(D\square H),r\in S} \tau_{S,r}(D\square H)\geq \ell+h=\tau_{3}(D)+\tau_{3}(H).
        %\tau_{3}(D\square H)=\min_{S\subseteq V(D\square H),r\in S} \tau_{S,r}(D\square H)\geq 3\lfloor\frac{\ell}{2}\rfloor=3\lfloor\frac{\tau_{3}(D)}{2}\rfloor
    \end{align*}
    %By the symmetry of Cartesian products, this also holds for $3\lfloor\frac{\tau_{3}(H)}{2}\rfloor$. Thus, $\tau_{3}(D\square H)\geq \min \{3\lfloor\frac{\tau_{3}(D)}{2}\rfloor,3\lfloor\frac{\tau_{3}(H)}{2}\rfloor\}.$
    To demonstrate the sharpness of this lower bound, we consider the following example.
    \begin{example}
        Let $n$ and $m$ be two integers with $n\geq 3$ and $m\geq 3$. The Cartesian product digraph $\overleftrightarrow{P_{n}}\square \overrightarrow{C_{m}}$ has no pendant $(S,r)$-trees, where $r\in S\subseteq V(\overleftrightarrow{P_{n}}\square \overrightarrow{C_{m}})$ and $|S|=3$.
        
        %\begin{proof}
            Obviously, $\overleftrightarrow{P_{n}}$ and $\overrightarrow{C_{m}}$ are both strong. Without loss of generality, let $V(\overleftrightarrow{P_{n}})$ and $V(\overrightarrow{C_{m}})$ have the following vertex and arc sets
            \begin{itemize}
                \item $V(\overleftrightarrow{P_{n}})=\{u_{i}\mid i\in [n]\}$, $A(\overleftrightarrow{P_{n}})=\{u_{i_{1}}u_{i_{2}}\mid |i_{1}-i_{2}|=1\}$;
                \item $V(\overrightarrow{C_{m}})=\{v_{j}\mid j\in [m]\}$, $A(\overrightarrow{C_{m}})=\{v_{j_{1}}v_{j_{2}}\mid j_{2}-j_{1}=1~(\text{mod}~m)\}$.
            \end{itemize}
            For short, let $Q=\overleftrightarrow{P_{n}}\square \overrightarrow{C_{m}}$, %On one hand, observe that $\tau_{3}(\overleftrightarrow{P_{n}})=0$ and $\tau_{3}(\overrightarrow{C_{m}})=0$. Hence, by Theorem \ref{thm1.1}, $\tau_{3}(Q)}\geq 0$. On the other hand, 
            $S=\{(u_{1},v_{1}),(u_{1},v_{2}),(u_{2},v_{1})\}\subseteq V(Q)$ and $r=(u_{1},v_{1})$ be the root. Clearly, $d_{Q}^{+}(r)=2$. By the definition of a pendant $(S,r)$-tree, neither of the arcs $r(u_{1},v_{2}), r(u_{2},v_{1})$ can belong to a pendant $(S, r)$-tree, so $\tau_{3}(Q)\leq 0$. Hence, we conclude that $\tau_{3}(Q)=0$.
        %\end{proof}
    \end{example}
    This completes the proof of Theorem \ref{thm1.1}.
\endspecialproof

\section{A polynomial-time algorithm for finding internally-disjoint pendant  \texorpdfstring {$(S,r)$-trees}{(S,r)-trees}}
In this section, we will propose a polynomial-time algorithm to find a set of internally-disjoint pendant $(S,r)$-trees in Cartesian product digraphs $D\square H$, which always attain the lower bound of Theorem \ref{thm1.1}. For short, we denote by $x_{p,a}$ the vertex $(u_{p},v_{a})$ in $D\square H$, and refer to internally-disjoint directed paths as ``IDDPs".

\begin{breakablealgorithm}\label{Alg-1}
\caption{\sc Internally-disjoint Pendant $(S,x_{p,a})$-Trees Searching Algorithm} %算法标题
\begin{algorithmic}[1] % 一行一个标行号
    \Require
       Two strong digraphs $D$, $H$ with $\tau_{3}(D)=\ell\geq 1$ and $\tau_{3}(H)=h\geq 1$, a vertex set $S=\{x_{p,a},x_{q,b},x_{w,c}\}$, $\ell$ internally-disjoint pendant $(S_{D},u_{p})$-trees $\widetilde{T}_{1},\cdots,\widetilde{T}_{\ell}$, $h$ internally-disjoint pendant $(S_{H},v_{a})$-trees $\hat{T}_{1},\cdots,\hat{T}_{h}$.
    \Ensure
       $\ell+h$ internally-disjoint pendant $(S,x_{p,a})$-trees $T_{1}',\cdots,T_{h}'$, $T_{1}^{*},\cdots,T_{\ell}^{*}$.
    \State {$S_{D}\leftarrow \{u_{p},u_{q},u_{w}\}$, $S_{H}\leftarrow \{v_{a},v_{b},v_{c}\}$.}
    \If {$|S_{D}|=1$}
        \State {$Z\leftarrow \{z_{1},\cdots,z_{\ell}\}$, where $z_{i}\in\{u_{j}\mid u_{p}u_{j}\in A(D)\}$ and they are distinct.}
        \State {$\{R_{i}: z_{i}-u_{p}\}_{i=1}^{\ell}\leftarrow$ $\ell$-fan in $D$ by \textbf{Alg 2}.}
        \For {$1\leq i\leq \ell$}
            \State{Choose $R_{i}^{(v_{b})}$, $R_{i}^{(v_{c})}$, $\hat{T}_{1}^{(z_{i})}$ and $x_{p,a}(z_{i},v_{a})$ to construct $T_{i}^{*}$.}
        \EndFor
        \State {$T_{j}'\leftarrow \hat{T}_{j}^{(u_{p})}$ for $1\leq j\leq h$.}
    \EndIf
    \If {$|S_{D}|=2$}
        \State {Let $(f,g),(f',g')$ be an ordering of $\{(q,b),(w,c)\}$ such that $f'\neq p$.}
        \State {$\{P_{i}: u_{p}-u_{f'}\}_{i=1}^{\ell+1}\leftarrow$ IDDPs in $D$ by \textbf{Alg 3}, with $|P_{\ell+1}|$ shortest.}
        \State {$z_{i}\leftarrow$ the predecessor of $u_{f'}$ on the path $P_{i}$ for each $1\leq i\leq \ell$.}
        \State {$\{R_{i}: z_{i}-u_{p}\}_{i=1}^{\ell}\cup \{R_{\ell+1}: u_{f'}-u_{p}\}\leftarrow$ $(\ell+1)$-fan in $D$ by \textbf{Alg 2}.}
        \If {$f=p$}
            \If {$g'\notin \{a,g\}$}
                \For {$1\leq j\leq h$}
                    \State {$s_{j}\leftarrow$ the predecessor of $v_{g'}$ on the tree $\hat{T}_{j}$.}
                    \State{Choose $\hat{T}_{j}^{(u_{p})}$, $P_{\ell+1}^{(s_{j})}$ and $(u_{f'},s_{j})x_{f',g'}$ to construct $T_{j}'$.}
                \EndFor
                \For {$1\leq i\leq \ell$}
                    \State {Choose $P_{i}^{(v_{a})}$, $\hat{T}_{1}^{(z_{i})}$, $R_{i}^{(v_{g})}$ and $(z_{i},v_{g'})x_{f',g'}$ to construct $T_{i}^{*}$.}
                \EndFor
            \Else
                \State {$\{Q_{j}: v_{a}-v_{g}\}_{j=1}^{h+1}\leftarrow$ IDDPs in $H$ by \textbf{Alg 3}, with $|Q_{h+1}|$ shortest.}
                \State{$s_{j}\leftarrow$ the predecessor of $v_{g}$ on the path $Q_{j}$ for $1\leq j\leq h$.}
                \If {$g'=g$}
                    \For {$1\leq j\leq h$}
                        \State{Choose $Q_{j}^{(u_{p})}$, $P_{\ell+1}^{(s_{j})}$ and $(u_{f'},s_{j})x_{f',g'}$ to construct $T_{j}'$.}
                    \EndFor
                    \For {$1\leq i\leq \ell$}
                        \State {Choose $P_{i}^{(v_{a})}$, $Q_{1}^{(z_{i})}$, $R_{i}^{(v_{g})}$, $(z_{i},v_{g'})x_{f',g'}$ to construct $T_{i}^{*}$.}
                    \EndFor
                \Else
                    \State {$\{W_{j}: s_{j}-v_{a}\}_{j=1}^{h}\cup \{W_{h+1}: v_{g}-v_{a}\}\leftarrow$ $(h+1)$-fan by \textbf{Alg 2}.}
                    \For {$1\leq j\leq h$}
                        \State{Choose $Q_{j}^{(u_{p})}$, $P_{\ell+1}^{(s_{j})}$ and $W_{j}^{(u_{f'})}$ to construct $T_{j}'$.}
                    \EndFor
                    \For {$1\leq i\leq \ell$}
                        \State {Choose $P_{i}^{(v_{a})}$, $Q_{1}^{(z_{i})}$ and $R_{i}^{(v_{g})}$ to construct $T_{i}^{*}$.}
                    \EndFor
                \EndIf
            \EndIf
        \Else
            \If {$a\notin \{g,g'\}$}
                \For {$1\leq j\leq h$}
                    \State {$s_{j}\leftarrow$ the branch vertex of $\hat{T}_{j}$.}
                    \State {Choose $\hat{T}_{j}^{(u_{p})}$, $P_{\ell+1}^{(s_{j})}$ and $\hat{T}_{j}^{(u_{w})}$ to construct $T_{j}'$.}
                \EndFor
                \For {$1\leq i\leq \ell$}
                    \State {Choose $P_{i}^{(v_{a})}$, $\hat{T}_{1}^{(z_{i})}$, $(z_{i},v_{g})x_{f,g}$, $(z_{i},v_{g'})x_{f',g'}$ to construct $T_{j}'$.}
                \EndFor
            \Else
                \State {Let $(d,e),(d',e')$ be an ordering of $\{(f,g),(f',g')\}$ such that $e=a$}
                \State {$\{Q_{j}:v_{a}-v_{e'}\}_{j=1}^{h+1}\leftarrow$ IDDPs in $H$ by \textbf{Alg 3}, with $|Q_{h+1}|$ shortest.}
                \State {$s_{j}\leftarrow$ the predecessor of $v_{e'}$ on the path $Q_{j}$ for $1\leq j\leq h$.}
                \State {$\{W_{j}:s_{j}-v_{e}\}_{j=1}^{h}\cup \{W_{h+1}:v_{e'}-v_{e}\}\leftarrow(h+1)$-fan by \textbf{Alg 2}.}
                \For {$1\leq j\leq h$}
                    \State {Choose $Q_{j}^{(u_{p})}$, $P_{\ell+1}^{(s_{j})}$, $W_{j}^{(u_{d'})}$, $(u_{d'},s_{j})x_{d',e'}$ to construct $T_{j}'$.}
                \EndFor
                \For {$1\leq i\leq \ell$}
                    \State {Choose $P_{i}^{(v_{a})}$, $Q_{1}^{(z_{i})}$, $(z_{i},v_{e'})x_{d',e'}$ to construct $T_{i}^{*}$.}
                \EndFor
            \EndIf
        \EndIf
    \EndIf
    \If {$|S_{D}|=3$}
        \State {$z_{i}\leftarrow$ the branch vertex of $\widetilde{T}_{i}$ for $1\leq i\leq \ell$.}
        \If {$|S_{H}|=3$}
            \State {$s_{j}\leftarrow$ the branch vertex of $\hat{T}_{j}$ for $1\leq j\leq h$.}
            \State {$\{W_{j}: s_{j}-v_{b}\}_{j=1}^{h}\cup \{W_{h+1}: v_{a}-v_{b}\}\leftarrow$ $(h+1)$-fan in $H$ by \textbf{Alg 2}.}
            \State {$\{Y_{j}: s_{j}-v_{c}\}_{j=1}^{h}\cup \{Y_{h+1}: v_{a}-v_{c}\}\leftarrow$ $(h+1)$-fan in $H$ by \textbf{Alg 2}.}
            \For {$1\leq j\leq h$}
                \State {Choose $\hat{T}_{j}^{(u_{p})}$, $\widetilde{T}_{1}^{(s_{j})}$, $W_{j}^{(u_{q})}$ and $Y_{j}^{(u_{w})}$ to construct $T_{j}'$.}
            \EndFor
            \For {$1\leq i\leq \ell-1$}
                \State {Choose $\widetilde{T}_{i+1}^{(v_{a})}$, $\hat{T}_{1}^{(z_{i+1})}$, $\widetilde{T}_{i+1}^{(v_{b})}$ and $\widetilde{T}_{i+1}^{(v_{c})}$ to construct $T_{i}^{*}$.}
            \EndFor
            \State {Choose $\widetilde{T}_{1}^{(v_{a})}$, $W_{h+1}^{(u_{q})}$ and $Y_{h+1}^{(u_{w})}$ to construct $T_{\ell}^{*}$.}
        \ElsIf {$|S_{H}|=2$}
            \State {Let $(f,g),(f',g')$ be an ordering of $\{(q,b),(w,c)\}$ such that $g'\neq a$.}
            \State {$\{Q_{j}: v_{a}-v_{g'}\}_{j=1}^{h+1}\leftarrow$ IDDPs in $H$ by \textbf{Alg 3}, with $|Q_{h+1}|$ shortest.}
            \State{$s_{j}\leftarrow$ the predecessor of $v_{g'}$ on the path $Q_{j}$ for $1\leq j\leq h$.}
            \If {$a=g$}
                \State {$\{W_{j}: s_{j}-v_{a}\}_{j=1}^{h}\cup \{W_{h+1}: v_{g'}-v_{a}\}\leftarrow$ $(h+1)$-fan by \textbf{Alg 2}.}
                \For {$1\leq j\leq h$}
                    \State {Choose $Q_{j}^{(u_{p})}$, $\widetilde{T}_{1}^{(s_{j})}$, $W_{j}^{(u_{f})}$, $(u_{f'},s_{j})x_{f',g'}$ to construct $T_{j}'$.}
                \EndFor
                \For {$1\leq i\leq \ell-1$}
                    \State {Choose $\widetilde{T}_{i+1}^{(v_{a})}$, $Q_{\ell+1}^{(z_{i+1})}$, $\widetilde{T}_{i+1}^{(v_{g'})}$ to construct $T_{i}^{*}$.}
                \EndFor
                \State {Choose $\widetilde{T}_{1}^{(v_{a})}$, $Q_{\ell+1}^{(u_{f'})}$ to construct $T_{\ell}^{*}$.}
            \Else
                \For {$1\leq j\leq h$}
                    \State {Choose $Q_{j}^{(u_{p})}$, $\widetilde{T}_{1}^{(s_{j})}$, $(u_{q},s_{j})x_{q,b},(u_{w},s_{j})x_{w,c}$ to construct $T_{j}'$.}
                \EndFor
                \For {$1\leq i\leq \ell-1$}
                    \State {Choose $\widetilde{T}_{i+1}^{(v_{a})}$, $Q_{1}^{(z_{i+1})}$, $\widetilde{T}_{i+1}^{(v_{b})}$ to construct $T_{i}^{*}$.}
                \EndFor
                \State {Choose $\widetilde{T}_{1}^{(v_{a})}$, $Q_{h+1}^{(v_{b})}$, $Q_{h+1}^{(v_{c})}$ to construct $T_{\ell}^{*}$.}
            \EndIf
        \Else
            \State{Choose $h$ distinct vertices $s_{1},\cdots,s_{h}\in \{v_{j}\in V(H)\mid v_{a}v_{j}\in A(H)\}$.}
            \State {$\{W_{j}: s_{j}-v_{a}\}_{j=1}^{h}\leftarrow$ $h$-fan in $H$ by \textbf{Alg 2}.}
            \For {$1\leq j\leq h$}
                \State {Choose $x_{p,a}(u_{p},s_{j})$, $\widetilde{T}_{1}^{(s_{j})}$, $W_{j}^{(u_{q})}$ and $W_{j}^{(u_{w})}$ to construct $T_{j}'$.}
            \EndFor
            \State {$T_{i}^{*}\leftarrow \widetilde{T}_{i}^{(v_{a})}$ for each $1\leq i\leq \ell$.}
        \EndIf
    \EndIf
\end{algorithmic}                               
\end{breakablealgorithm}

%We use the following algorithms to find an $\ell$-fan and $\ell$ IDDPs in an  $\ell$-strong digraph $D$.
Algorithm \ref{Alg-1} uses two polynomial-time subroutines. The first finds an $\ell$-fan, and the second finds $\ell$ internally-disjoint directed paths between two vertices. These are implemented by Algorithms \ref{Alg-2} and \ref{Alg-3}, respectively. Both subroutines are based on standard network-flow constructions.
\begin{breakablealgorithm}\label{Alg-2}
\caption{\sc An $\ell$-fan Searching Algorithm} %算法标题
\begin{algorithmic}[1] % 一行一个标行号
    \Require
       An $\ell$-strong digraph $D$, a vertex $u\in V(D)$, a set $Z=\{z_{1},z_{2},\cdots,z_{\ell}\}\subseteq V(D)\backslash \{u\}$.
    \Ensure
       An $\ell$-fan $\{R_{i}:z_{i}-u\}_{i=1}^{\ell}$ in $D$.
    \State{Construct a flow network $N$ from digraph $D$.}
    \State{Transform each vertex $x\in V(D)$ into $x_{in}$ and $x_{out}$.}
    \State{For each arc $x_{in}x_{out}$, set capacity $1$~($x\neq u$) or $\ell$~($x=u$).}
    \State{For each arc $xy\in A(D)$, set capacity $\ell$ to arc $x_{out}y_{in}$.}
    \State{Add a super-source $s$, connect $s$ to each $z_i\in Z$ with capacity $1$, and set the sink as $u_{out}$.}
    \State{Apply the \textbf{Dinic's algorithm} to compute the maximum flow from $s$ to $u_{out}$ in $N$.}
    \State{Return $\ell$-fan $\{R_{i}:z_{i}-u\}_{i=1}^\ell$ in $D$ based on the flow distribution of the maximum flow.}
\end{algorithmic}                               
\end{breakablealgorithm}

Similarly, we can find $\ell$ internally-disjoint directed paths between two distinct vertices in digraph $D$ by the following algorithm.
\begin{breakablealgorithm}\label{Alg-3}
\caption{\sc Internally-disjoint directed paths Searching Algorithm} %算法标题
\begin{algorithmic}[1] % 一行一个标行号
    \Require
       An $\ell$-strong digraph $D$, two distinct vertices $u,v\in V(D)$.
    \Ensure
       $\ell$ IDDPs $\{P_{i}:u-v\}_{i=1}^{\ell}$ in $D$.
    \State{Construct a flow network $N$ from digraph $D$.}
    \State{Transform each vertex $x\in V(D)$ into $x_{in}$ and $x_{out}$.}
    \State{For each arc $x_{in}x_{out}$, set capacity $1$~($x\notin \{u,v\}$) or $\ell$~($x=u~\text{or}~v$).}
    \State{For each arc $xy\in A(D)$, set capacity $\ell$ to arc $x_{out}y_{in}$.}
    \State{Set the source as $u_{out}$ and the sink as $v_{in}$.}
    \State{Apply the \textbf{Dinic's algorithm} to compute the maximum flow from $u_{out}$ to $v_{in}$ in $N$.}
    \State{Return $\ell$ IDDPs $\{P_{i}:u-v\}_{i=1}^{\ell}$ in $D$ based on the flow distribution of the maximum flow.}
\end{algorithmic}                               
\end{breakablealgorithm}

We are now give the proof of Theorem~\ref{thm1.2}, which shows the time-complexity of Algorithm~\ref{Alg-1}. Before this, we need the following lemma.

\begin{lemma}\cite{Dinic}\label{Dinic}
    Let $D=(V(D),A(D))$ be a digraph. \textbf{Dinic's algorithm} correctly determines a maximum flow in time $O(|V(D)|^{2}|A(D)|)$.
\end{lemma}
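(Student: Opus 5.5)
This is the classical bound for Dinic's blocking-flow method on a network $N$ with vertex set $V(D)$, arc set $A(D)$, source $s$ and sink $t$. I would prove correctness and running time separately.

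\emph{Correctness.} The algorithm proceeds in phases, starting from the zero flow $f$. In each phase it takes the residual network $N_f$ and runs a breadth-first search from $s$ to get distance labels $\mathrm{dist}(v)$. It then keeps only the arcs $xy$ of $N_f$ with $\mathrm{dist}(y)=\mathrm{dist}(x)+1$; this is the level graph $L_f$. Next it finds a blocking flow $g$ in $L_f$, meaning every $s$--$t$ path in $L_f$ contains an arc saturated by $g$, and replaces $f$ by $f+g$. The algorithm stops when $t$ is unreachable from $s$ in $N_f$. At that moment, let $X$ be the set of vertices reachable from $s$ in $N_f$. Every arc leaving $X$ is saturated and every arc entering $X$ carries zero flow, so the value of $f$ equals the capacity of the cut $(X,V\setminus X)$. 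By the max-flow min-cut theorem, $f$ is therefore maximum. Integrality of capacities is not needed, since each phase only adds a feasible flow.

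\emph{Number of phases.} The key claim is that $d_f(s,t)$, the distance from $s$ to $t$ in the residual network, strictly increases from one phase to the next. Since this distance lies in $\{1,\dots,|V(D)|-1\}$ while $t$ is reachable, there are at most $|V(D)|-1$ phases. To prove the claim, I would show that every arc of $N_{f+g}$ either already lies in $N_f$ or is the reverse of an arc of $L_f$. In both cases the arc $xy$ satisfies $\mathrm{dist}(y)\le \mathrm{dist}(x)+1$, and reversed arcs even satisfy $\mathrm{dist}(y)=\mathrm{dist}(x)-1$. Hence every $s$--$t$ path in $N_{f+g}$ has length at least $\mathrm{dist}(t)$. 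Equality would force the path to use only arcs of $L_f$, but then it would be a path in $L_f$ avoiding every arc saturated by $g$, contradicting that $g$ is blocking. I expect this monotonicity argument to be the main obstacle, because one must carefully track the reverse arcs created when $g$ is added.

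\emph{Cost per phase.} The breadth-first search and the construction of $L_f$ take $O(|A(D)|)$ time. For the blocking flow I would use depth-first search with a current-arc pointer at each vertex. Each search step either advances along the current arc, or retreats from a dead end, in which case that arc is deleted permanently. Each time $t$ is reached, we push the bottleneck amount along the path found, which has length at most $|V(D)|$, and at least one arc becomes saturated and is deleted. Thus there are at most $|A(D)|$ augmentations costing $O(|V(D)|)$ each. Retreats cost $O(|A(D)|+|V(D)|)$ in total, since each deletes an arc or a dead vertex. A phase therefore costs $O(|V(D)||A(D)|)$. Multiplying by the at most $|V(D)|-1$ phases gives the stated bound $O(|V(D)|^{2}|A(D)|)$.
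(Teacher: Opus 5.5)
Your argument is correct and is the standard proof of Dinic's bound: the cut of vertices reachable from $s$ certifies maximality at termination; the residual $s$--$t$ distance strictly increases, giving at most $|V(D)|-1$ phases; and each blocking flow costs $O(|V(D)||A(D)|)$ via current-arc depth-first search, with advances that do not end in an augmentation implicitly charged to the retreats that undo them. The paper gives no proof of this lemma and simply cites Dinic's original work, whose approach your proposal reproduces.
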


\specialproof{thm1.2}
    %When $|S_{D}|=1$, it takes $O(|V(D)^{2}||A(D)|)$ to  construct the pendant $(S,r)$-trees.
    %It needs $O(|V(H)^{2}||A(H)|)$ to find an $\ell+1$ IDPPs in Step 12 and Step 14, and $O(|V(D)^{2}||A(D)|)$ to find an $\ell$-fan in Step 24 and Step 34. It takes $O(|V(D)^{2}||A(D)|+|V(H)^{2}||A(H)|)$ to construct the pendant $(S,r)$-trees when $|S_{D}|=2$. When $|S_{D}|=3$, it also takes 
    No matter which of the three cases $|S_{D}|=1,2~\text{or}~3$ is executed, Algorithm~\ref{Alg-1} invokes Algorithm \ref{Alg-2} and \ref{Alg-3} at most a constant number of times on $D$ and $H$. %Each invocation of Algorithm \ref{Alg-2} or \ref{Alg-3} on $D$ (resp. $H$) is performed to find $\ell$ (resp. $h$) internally-disjoint directed paths or to construct an $\ell$-fan, which corresponds to computing a maximum flow of value $\ell$. By Lemma \ref{}
    Each such operation takes $O(|V(D)|^{2}|A(D)|)$ when performed on $D$, or $O(|V(H)|^{2}|A(H)|)$ when performed on $H$. Therefore, the overall time complexity of Algorithm \ref{Alg-1} is $O(|V(D)|^{2}|A(D)|+|V(H)|^{2}|A(H)|)$.
\endspecialproof

\vskip 1cm

\noindent {\bf Acknowledgement.} This work was supported by National Natural Science Foundation of China under Grant No. 12371352 and Yongjiang Talent Introduction Programme of Ningbo under Grant No. 2021B-011-G.

\end{document}